\newtheorem{theorem}{Theorem} [section]
\newtheorem{lemma}[theorem]{Lemma}
\newtheorem{proposition}[theorem]{Proposition}
\newtheorem{corollary}[theorem]{Corollary}
\theoremstyle{definition}
\newtheorem{remark}[theorem]{Remark}
\newtheorem{assumption}{Assumption}
\newtheorem{definition}[theorem]{Definition}
\newcommand{\1}{\mathbbm 1}
\newcommand{\Z}{\mathbb{Z}}
\newcommand{\R}{\mathbb{R}}
\newcommand{\T}{\mathbb{T}}
\let \div \relax
\let\Re=\undefined\DeclareMathOperator*{\Re}{Re}
\let\Im=\undefined\DeclareMathOperator*{\Im}{Im}
\newcommand{\E}{\mathbb{E}}
\newcommand{\FL}{\mathcal{F}L} %%%%%%%%%%%% Fourier-Lebesgue spaces
\newcommand{\ep}{\varepsilon}
\newcommand{\s}{\sigma}
\newcommand{\ft}{\widehat}
\newcommand{\cj}{\overline}
\newcommand{\les}{\lesssim}
\newcommand{\jb}[1]
{\langle #1 \rangle}
\newcommand{\ind}{\mathbbm 1}
\newcommand{\N}{\mathbb{N}}
\newcommand{\QQ}{\mathcal{Q}}
\newcommand{\eps}{\ep}
\newcommand{\ph}{\varphi}
\renewcommand{\ft}{\widehat}
\numberwithin{equation}{section}
\numberwithin{theorem}{section}
\let\div\relax
\DeclareMathOperator{\div}{div}
\begin{document}
\baselineskip = 14pt

\title[Quasi-invariance for NLS on $\mathbb T^2$]
{Quasi invariant Gaussian measures for the nonlinear Schr\"odinger equation on $\mathbb T^2$} 

\begin{abstract}
    We study the transport of Gaussian measures under the flow of the 2-dimensional defocusing Schr\"odinger equation $i \partial_t u + \Delta u = |u|^{2k} u$ posed on $\mathbb T^2$. In particular, we show that the Gaussian measures with inverse covariance $\|u\|_{H^s}^2$, are quasi-invariant under the flow for $s>2$. Moreover, we show that the Radon-Nykodim density belongs to every $L^p$ space, locally in space. The proof relies on the physical-space energies introduced in [52], as well as a new abstract quasi-invariance argument that allows us to combine space-time estimates, along the flow with probabilistic bounds on the support of the measure. 
\end{abstract}

\author[L.~Tolomeo, N.~Visciglia]
{Leonardo Tolomeo and Nicola Visciglia}

 \address{
Leonardo Tolomeo\\
School of Mathematics\\
The University of Edinburgh\\
and The Maxwell Institute for the Mathematical Sciences\\
James Clerk Maxwell Building\\
The King's Buildings\\
Peter Guthrie Tait Road\\
Edinburgh\\ 
EH9 3FD\\
 United Kingdom\\
}

\email{l.tolomeo@ed.ac.uk}

\address{
Nicola Visciglia\\
Dipartimento di Matematica\\
Universit\`a di Pisa\\
Largo B. Pontecorvo 5, 56127\\
Italy\\
}

\email{nicola.visciglia@unipi.it}

\subjclass[2020]{35Q55, 35R60, 37A40, 60H30}

\maketitle
\section{Introduction}
In this paper we study the Cauchy problem associated with the defocusing Nonlinear Schr\"odinger Equation posed on $\T^2$, with a generic (defocusing) odd nonlinearity:
\begin{equation}
    \begin{cases} \label{NLS}
i\partial_t u + \Delta u=|u|^{2k}u, \quad k\in \N, \quad (t, x)\in \R \times \T^2\\
u(0,x)=u_0\in H^\s,
\end{cases}
\end{equation}
where $H^\s$ denotes the usual Sobolev space on $\T^2$. As a byproduct of the paper \cite{BGT}, one can show that the Cauchy problem above is globally well posed in the Sobolev spaces $H^\s$ with $\s\geq 1$, hence it admits a well-defined flow $\Phi_t:H^\s\rightarrow H^\s$. In this work, we study how certain Gaussian measures are transported under this flow. 
More precisely, we assume that the initial datum is distributed along the Gaussian measure 
$\mu_s$, formally given by $"\frac 1{\mathcal Z} e^{-\frac 12 \|u\|^2_{H^s}} du"$, induced by the random vector 
\begin{equation}\label{defmus}
\omega\mapsto \phi^\omega(x)=\sum_{n\in \Z^2} \frac{g_n(\omega)}{{\jb n}^s} e^{in\cdot x},\end{equation}
where $\{g_n\}_{n\in \Z^2}$ is an i.i.d. family of complex gaussian variables such that $\E[g_n]=0, \E[|g_n|^2]=1, 
\E[g_n^2]=0$.
%The main aim of the paper is the study of the transported measure $\Phi(t)_{\#}\mu_s$. In particular we are interested in its quasi-invariance properties.
The main result of this paper is the following.
\begin{theorem} \label{thm:main}
Let $s>2$, and let $\Phi_t$ be the flow of \eqref{NLS} on the Sobolev space
$H^\s$ with $1\leq \s<s-1$. For every $t\in \R$, we have that
$$\Phi_{t\#}\mu_s \ll \mu_s.$$
Moreover, there exists a Banach space $X \supseteq H^1$, such that for every $t \in \R$, $\Phi_{t\#}\mu_s(X) = 1$, and moreover, for every $p > 1$, and for every $R, T >0$ the Radon-Nykodym derivative 
$$ f_t:= \frac{d \Phi_{t\#}\mu_s}{d \mu_s}$$ satisfies 
\begin{equation} \label{mudensitybound}
    \sup_{|t| \le T} \big \| f_t \1_{\|u\|_{X} \le R}\big \|_{L^p(\mu_s)} < \infty.
\end{equation}
\end{theorem}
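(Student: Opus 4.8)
The strategy is the classical one for quasi-invariance via energy estimates, but organized around the "abstract quasi-invariance argument" advertised in the abstract, so that the probabilistic control on the support (through the Banach space $X$) and the deterministic space-time bounds along the flow are kept separate and only combined at the very end. First I would recall (or construct) the physical-space modified energy $E_t(u)$ from \cite{BGT}-type constructions: a functional, roughly of the form $\|u\|_{H^s}^2$ plus lower-order correction terms, whose time derivative along the flow of \eqref{NLS} gains regularity, so that $\frac{d}{dt} E_t(\Phi_t u)$ can be estimated by a quantity controlled by $\|u\|_{H^\sigma}$ for $\sigma < s-1$ (times a power depending on $k$), \emph{uniformly} on sets where a weaker norm $\|u\|_X$ is bounded. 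The point of the physical-space energies is precisely that this derivative is a well-defined, pointwise-in-$x$ integrable expression, so the estimate is genuinely space-time and not merely Fourier-analytic. This gives, for $\mu_s$-a.e.\ initial datum and all $|t|\le T$,
\begin{equation*}
\big| E_t(\Phi_t u) - E_0(u) \big| \le C(T,R) \quad \text{on } \{\|u\|_X \le R\},
\end{equation*}
and more importantly a differential inequality that will be integrated against the measure.

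Next I would set up the change-of-variables / Liouville-type computation. Writing $\mu_s$ as the (formal) Gibbs measure with density $e^{-\frac12\|u\|_{H^s}^2}$ with respect to a "flat" reference, and using that the flow $\Phi_t$ preserves the relevant reference structure (the mass $\|u\|_{L^2}^2$ is conserved and one works on truncated/finite-dimensional approximations $\Phi_t^N$ first, passing to the limit at the end), the Radon--Nikodym derivative along the truncated flow satisfies a transport equation whose right-hand side is exactly $\frac{d}{dt}$ of the energy correction. Concretely, one gets for the truncated densities $f_t^N$ a Yudovich/Gronwall-type a priori bound
\begin{equation*}
\frac{d}{dt} \int \big(f_t^N\big)^p \, \1_{\|u\|_X \le R} \, d\mu_s \;\lesssim_{p} \int \big(f_t^N\big)^p \, \1_{\|u\|_X \le R}\, \big\| \tfrac{d}{dt} E^N_t(u)\big\|_{\text{(some space)}} d\mu_s,
\end{equation*}
and the job is to absorb the right-hand side. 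This is where the abstract lemma does its work: it should say that if (a) the energy increment is small on $\{\|u\|_X\le R\}$ in a suitable $L^q(\mu_s)$ sense, with an exponent $q$ that can be taken as large as we like, and (b) $\mu_s(\|u\|_X\le R)$ can be made to exhaust the space as $R\to\infty$, then $\|f_t \1_{\|u\|_X\le R}\|_{L^p(\mu_s)}$ stays finite, with a Gronwall-in-$t$ bound, and the a.e.\ limit $f_t = \lim_N f_t^N$ exists and gives the Radon--Nikodym derivative of $\Phi_{t\#}\mu_s$.

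The two technical inputs, both requiring real work, are: (i) the \textbf{energy estimate}: showing that the time derivative of the modified energy is controlled by $\|u\|_{H^\sigma}$-type quantities with $\sigma<s-1$ — this is exactly the regularity budget that forces $1\le\sigma<s-1$ and $s>2$ in the statement, and it is where the defocusing odd nonlinearity $|u|^{2k}u$ and the structure of the physical-space energies of \cite{BGT} are used (multilinear estimates, integration by parts in $x$, controlling the resonant/near-resonant contributions in physical space rather than in frequency); and (ii) the \textbf{probabilistic / support estimate}: constructing the Banach space $X\supseteq H^1$ (something like a weighted or locally-in-$x$ space, e.g.\ built from $W^{\sigma',p'}$ or a Besov space just below the scaling of $\phi^\omega$) such that $\phi^\omega\in X$ a.s., that $X$ is preserved well enough by the flow (so $\Phi_{t\#}\mu_s(X)=1$), and — the delicate part — such that the energy-difference functional has enough integrability on $\{\|u\|_X\le R\}$ to feed the Gronwall argument. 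I expect the \textbf{main obstacle} to be reconciling these two: the energy estimate wants to bound things by $\|u\|_{H^\sigma}$ with $\sigma$ close to $s-1 > 1$, but Gaussian measure $\mu_s$ only a.s.\ lives in $H^{s-1-}$, so one is working at the very edge of the support; the abstract lemma is designed precisely to turn "the energy increment is in every $L^p(\mu_s)$ up to a truncation by $\|u\|_X$" into the claimed conclusion, and verifying hypothesis (a) of that lemma — i.e.\ that after truncating by $\|u\|_X\le R$ the bad term is not merely finite but has arbitrarily high moments with a quantitative dependence — is the crux. Once (i), (ii), and the abstract lemma are in place, \eqref{mudensitybound} and $\Phi_{t\#}\mu_s\ll\mu_s$ follow by integrating the differential inequality over $|t|\le T$ and letting $R\to\infty$ along the exhaustion.
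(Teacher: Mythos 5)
There is a genuine gap, and it sits exactly at the point you flag as ``the crux.'' Your argument funnels everything through a single static hypothesis: that the time derivative of the modified energy, truncated to $\{\|u\|_X\le R\}$, lies in $L^q(\mu_s)$ for arbitrarily large $q$ (equivalently, is exponentially integrable there), i.e.\ the condition \eqref{in0} of the introduction. At the regularity of the Gaussian data ($u\in H^{s-1-}$ only) this hypothesis is not available for the bulk of the terms: the modified energy of \cite{PTV_first} gives $|\frac{d}{dt}(\frac12\|u\|_{H^s}^2+S(u))|\lesssim \|u\|_{H^s}^{2-}$, i.e.\ the derivative still carries a total of $2s-$ derivatives spread over two high factors, and no fixed-time bound by $\|u\|_{H^\sigma}$-type quantities with $\sigma<s-1$ (nor by $\|u\|_{\mathcal FL^{\s,\infty}}$) holds for those terms. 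The paper's proof does \emph{not} estimate $\QQ$ at fixed time; it splits $\QQ=\QQ_1+\QQ_2$ and proves two different kinds of bounds: for $\QQ_2$ (almost all terms) a \emph{deterministic space-time} bound of the form \eqref{withdynamics}, obtained by integrating in time along the flow, performing a normal-form integration by parts in the phase, and invoking the counting estimates of Deng--Nahmod--Yue together with the norms of Definition \ref{XTspace} (see \eqref{butcherdeng}, \eqref{butcherdengpairing}, \eqref{butchercrispi} and Section 6); and for the single residual term $\QQ_1\sim\big(\int D^s\bar u\nabla u\big)\cdot\big(\int|u|^{2k-2}u^2D^{s-2}\bar u\nabla\bar u\big)$ a \emph{probabilistic} exponential bound \eqref{Qin0}, which uses the cancellation $|g_n|^2-1$ in the pairing $\int P_N(\nabla u)P_N(D^s\bar u)$ and is genuinely a statement about the Gaussian measure, not about $X$-balls. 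Your proposed abstract lemma, whose only dynamical ingredient is a Gronwall/Yudovich absorption of a fixed-time moment bound, cannot be fed these two inputs; the whole point of the paper's Proposition \ref{abstractquasiinvariance} (Assumption \ref{ass6}, conditions \eqref{Qin0} and \eqref{spacetimeQ}) is to combine a static exponential bound for $\QQ_1$ with an along-the-flow bound for $\int_0^t\QQ_2(\Phi_\tau)\,d\tau$, and its proof requires extra structure you do not have in your setup: the flow-invariant sets $S_t$, the global-in-$X$ bounds of Assumption \ref{ass_global} (hence the Fourier--Lebesgue Cauchy theory of Section 3 with $X=\mathcal FL^{\s,\infty}$, not a generic Besov-type space), and a Jensen-in-time step to convert the time-integrated $\QQ_1$ back into a fixed-time quantity.

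Two smaller points. First, the physical-space energies are those of Planchon--Tzvetkov--Visciglia \cite{PTV_first}, not of \cite{BGT}; \cite{BGT} only enters through the Strichartz-based $H^1$ well-posedness. Second, the paper does not run the argument directly for $\mu_s$: it works with the weighted measure $\rho=e^{-S}\,d\mu_s$ of \eqref{Sdef}--\eqref{rhoDefAbstract} precisely so that the correction $S$ cancels the worst term in the energy derivative, and then transfers the $L^p$ density bound back to $\mu_s$ via the local exponential integrability of $S$ (Lemma \ref{lem:Sexploc}) and H\"older; your sketch absorbs the correction into the energy but never addresses the construction and integrability of $S$ itself (Lemma \ref{lem:Sconvergence}), which is again a probabilistic statement, not a consequence of membership in an $X$-ball. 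Without the dynamical estimates \eqref{spacetimeQ} and the probabilistic inputs for $\QQ_1$ and $S$, the differential inequality you propose to integrate cannot be closed.
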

Quasi-invariance for the deterministic flow of Hamiltonian and dispersive PDEs in situations where the initial data is {not} distributed according to an invariant measure has been studied extensively in the last decade. This pursuit was initiated by Tzvetkov in \cite{Tz} for the BBM equation, with many results following for a number of other Hamiltonian PDEs, see
\cite{Bo96, BTh, CT, DT2020, FS, FT, FT2, GLT1, GLT2, GOTW, Knezevitch1, Knezevitch2, Knezevitch3, OS, OST, OTT, OTz, OTz2, PTV, PTV2, ST, STX, ThTz}
and references therein. Recently, the first author has kick-started the process of studying similar properties in the setting of (Hamiltonian) PDEs with the addition of stochastic noise, see \cite{FT3} and the related work \cite{CHT}.
\subsection{Background and proof strategy}
We now describe the ``standard" strategy to show results such as Theorem \ref{thm:main}, and the main technical novelties of this work. Consider (as a toy model) the dynamical system on $\R^d$
\begin{equation} \label{ODE}
\begin{cases}
    \dot y = b(y), \\
    y(0) = x,
\end{cases}
\end{equation}
where $b: \R^d \to \R^d$ is a smooth vector field. For the purpose of this subsection, denote by $\Phi_t(x)$ the solution of \eqref{ODE}. 
For every probability measure $\nu_0$ on $\R^d$, 
the transported measure $\nu_t := (\Phi_t(x))_{\#} \nu_0$ will satisfy the Liouville's equation
\begin{equation}\label{Liouville}
    \partial_t \nu_t = - \div{(b\, \nu_t)}.
\end{equation}
When $\nu_0$ is a probability measure of the form 
$$ \nu_0 = \frac{1}{Z}\exp(-E(y)) dy $$
for some smooth function $ E: \R^d \to \R$ with $\exp(-E) \in L^1(\R^d)$, then we can solve \eqref{Liouville} for 
$$ f_t:= \log \frac{d\nu_t}{dx} - \log \frac{d\nu_0}{dx} $$
explicitly with the method of characteristics, and obtain that 
$$ f_t = E(x) -E(\Phi_{-t}(x)) + \int_0^t \div{(b)}\big(\Phi_{-t'}(x)\big) dt'. $$ 
For convenience, define 
\begin{equation}
    \QQ(x) = \left.\frac{d}{dt} E(y(t))\right|_{t=0} + \div{(b)},
\end{equation}
from which we obtain 
\begin{equation} \label{densityformula}
    \nu_t = \exp\Big(\int_0^t \QQ(\Phi_{-t'}(x)) dx \Big) \nu_0.
\end{equation}
The main strategies to prove quasi-invariance so far have been 
\begin{enumerate}
    \item If one can prove that $\QQ \in \exp(L)(\nu_0),$ in the sense that 
    $$ \int \exp(\eta \QQ) d \nu_0 < \infty$$
    for some $\eta > 0$, then one can perform a Gronwall/Yudovich argument on the solution of \eqref{Liouville}, and obtain the existence of the density at every time. This is essentially the strategy followed in the pioneering work \cite{Tz}.
    \item If on top of this, one has good global control on the flow $\Phi_t$, then the condition above can be relaxed to 
    \begin{equation}\label{in0}
        \QQ\in \exp(L)_{\rm loc}(\nu_0).
    \end{equation} 
    This approach was firstly used in the works by Planchon, Tzvetkov, and the second author \cite{PTV, PTV2}. 
    \item Alternatively, one could try to exploit the cancellations caused by the time integral in \eqref{densityformula}, which boil down to showing that
    \begin{equation}
        \label{withdynamics} \int_0^t \QQ(\Phi_{t'}(x)) dx \in \exp(L)_{\rm loc}(\nu_0). 
    \end{equation}
    This was firstly used by Debussche and Tsutsumi in \cite{DT2020} exploiting deterministic, global-in-time bounds on the flow, and subsequently by the first author and Forlano in \cite{FT2} while exploiting probabilistic, local-in-time bounds.
\end{enumerate}

The main novelty of this paper is that we develop a framework to exploit (2) and (3) at the same time. Indeed, speaking informally, in Section \ref{sec:abstract},  we show that if one can write $\QQ = \QQ_1 + \QQ_2$ such that $\QQ_1$ satisfies \eqref{in0} and $\QQ_2$ satisfies \eqref{withdynamics}, then this is enough to obtain quasi-invariance. 

From a technical point of view, in order to simplify the formula for the time derivative $\QQ = \left.\frac{d}{dt} E\right|_{t=0}$, 
it turns out that instead of considering $ E = \frac{1}{2} \|u\|_{H^s}^2$ (which corresponds exactly to $\nu_0 = \mu_s$), it is convenient to add a correction term, and define 
$$ \nu_0 = \exp(-S(u)) d \mu_s.$$
Since $\nu_0$ and $\mu_s$ are equivalent, the respective quasi-invariance statements are also equivalent. 

The construction of the modified energy $S(u)+\frac12 \| u\|_{H^s}^2$ is intimately related to the study of the deterministic growth of Sobolev norms for solutions belonging to the space $H^s$. 
In particular, via the technique of modified energy, it has been shown that many dispersive equations enjoy polynomial growth of the Sobolev norms, see \cite{BPTV, PTV2, PTV_first, PTVHarm}. Recently, using the same strategy, in \cite{HOV} the authors proved a globalization argument for solutions to the generalised derivative NLS.

The energy that we use in this paper was firstly introduced by Planchon, Tzvetkov and the second author in \cite{PTV_first} (in the case $s$ integer), where it was tailored in such a way to obtain the following estimate
$$\big|\frac d{dt} \big(\frac12 \| u\|_{H^s}^2+ S(u)\big)\big|
\les \|u\|_{H^s}^{2-},$$
which in turn implies polynomial growth of the $H^s$ norm (provided that $S(u)$ is lower order term compared to
$\| u\|_{H^s}^2$). 
The main feature that allows to obtain this result is that, in the formula for $\QQ$, the $2s$ derivatives coming from the $\| u\|_{H^s}^2$ are ``well-distributed", see \eqref{energyestimate}.
As it turns out, 
this is also the main property that allows one to obtain the bound \eqref{withdynamics} for almost all the terms in $\QQ$, 
via \emph{deterministic} transference-type multilinear bounds. Nevertheless, due to the low regularity of the Gaussian data ($u \in H^{s-1-\eps}$), one of the terms in $\QQ$ is proportional to
$$ \big(\int D^s \cj u \nabla u\big)\cdot \big(|u|^{2k-2}u^2 D^{s-2}\cj u \nabla u\big), $$
which cannot have a deterministic bound at the regularity of the Gaussian measure, and forces us to prove an estimate of the form \eqref{in0} for this term. 

\subsection{Remarks}
\begin{remark}
    A crucial feature of the quantity 
    $$ \QQ = \left.\frac{d}{dt} \big(S(u) + \frac12 \|u\|_{H^s}^2\big)\right|_{t=0}$$
    is that in many terms, the factors with the most derivatives both depend on $\cj{u}$. See for instance 
    $$ \eqref{E:2} = -2k (k-1)
\int D^{s-2} \bar u u^{k+1}  \Delta \bar u \bar u^{k-2} D^s \bar u.$$
This allows us to exploit the oscillations coming from the linear propagator $e^{-it\Delta}$ in a very strong way, see Proposition \ref{prop:butchercrispi}.
\end{remark}
\begin{remark}
After this paper was completed we have learned that a similar result, with a different proof, has been obtained in the paper \cite{ST2}.    
\end{remark}
\subsection{Notations} We 
denote by $H^\s$ and $W^{s,p}$ the usual Sobolev spaces on the torus $\T^2$. 
We use the Fourier-Lebesgue spaces
$\mathcal FL^{s,\infty}$,
whose norm is defined as
$$\|f\|_{\mathcal FL^{s,\infty}}=\sup_{j\in \Z^2} \hbox{ }  \langle j\rangle^s |\hat f(j)|,$$ where
$\hat f(j)= \int_{\T^2} e^{-ij\cdot x} dx$, $dx$ is the Lebesgue measure on $\T^2$ and $\jb j=\sqrt{1+|j|^2}$. 
We define by $D^s$ the Fourier multiplier associated with $|n|^s$.
For shortness we shall write $\int f=\int_{\T^2} f dx$ of a time-independent function
and $\int_I\int F=\int\int_{I\times \T^2} F(t,x) dxdt$ of any time dependent function $F(t,x)$ 
defined on $I\times \R^2$, with $I\subset \R$. 
For a generic time-dependent function $F(t,x)$
and for a $(X, \|\cdot \|_{X})$, Banach space of time independent functions,
we write $\|F\|_{L^p_I X}^p=\int_I \|F(t,x)\|_{X}^p dt$, with an obvious generalization to the case $p=\infty$.
Given two functions $f$, $g$ time independent, we write
$(f,g)=\int f\bar g $, for the classical $L^2$ scalar product.
For every $a\in \R$ we denote by $a+$
and $a-$ respectively $a+\varepsilon$ and $a-\varepsilon$, for any $\varepsilon>0$ small enough. Given $N \in \{ 1, 2, 4, \dotsc\}$ dyadic, we denote by $P_N$ the sharp Fourier projector given by 
\begin{equation*}
   \ft{P_N f}(n) =  \ft{f}(n) \cdot 
   \begin{cases}
        \1_{|n| \le 1} & \text{ if }N=1,\\
        \1_{N/2 < |n| \le N} & \text{ if }N>1.
    \end{cases}
\end{equation*}
We will also write 
$$ P_{\le N} = \sum_{M\le N} P_N. $$

\subsection*{Acknowledgments} The authors wish to thank the department of Mathematics in Orsay and Nicolas Burq for supporting this research. The authors were partially supported by the European Research Council (ERC) under the European Union's Horizon 2020 research and innovation programme (Grant agreement
101097172 -- GEOEDP).

\section{An abstract criterion for quasi-invariance} \label{sec:abstract}
In this section, we show an abstract quasi-invariance result under the following assumptions.
\begin{assumption}[Energy-type measure]\label{ass:1}
    $H \subseteq X$ be an abstract Wiener space, and let $\mu$ be the cylindrical Wiener measure on $X$, i.e.\ $\mu$ has mean $0$ and covariance 
    \begin{equation}\label{mu_cov}
        \int |\jb{x,\ph}|^2 d\mu(x) = \|\ph\|_{H^*}^2.
    \end{equation}
    for every $\ph \in X^* \subseteq H^*$. For a measurable functional $S: X \to \R$, we define
    \begin{equation} \label{rhoDefAbstract}
        \rho = \exp( - S(x)) \mu(x).
    \end{equation}
\end{assumption}
\begin{assumption}[Flow map]\label{ass:2}
    Let $\Phi: \R \times X \to X$ be jointly continuous flow map, i.e.\ for every $t,s \ge 0, x \in X$, we have that 
    $$ \Phi(t+s, x) = \Phi(t,\Phi(s,x)). $$
    Moreover, we assume that there exists a sequence of finite-dimensional subspaces $E_n \subset H \subseteq X$ and flow maps $\Phi_n: \R_{\ge 0} \times E_n \to E_n$ such that, calling $P_n$ the orthogonal projection to $E_n$, $P_n$ is bounded on $X$ and for every $t \in \R$,
    \begin{equation}
        \Phi(t,x) = \lim_{n \to \infty} \Phi_n(t,P_n(x)).
    \end{equation}
\end{assumption}
\begin{assumption}[Divergence free] \label{divfree}\label{ass3}
    The maps $\Phi_n$ preserve the Lebesgue measure on $E_n$. Moreover,    for every $x \in E_n$, the map $t \mapsto S(\Phi_n(t,x)) + \frac12 \|\Phi_n(t,x)\|_{H}^2$ is $C^1$ in time, and  \begin{equation}\label{norm_derivative}
        \left.\partial_t \left(S(\Phi_n(t,x)) + \frac12 \|\Phi_n(t,x)\|_{H}^2\right)\right|_{t=0} = \QQ_1^n(x) + \QQ_2^n(x).
    \end{equation}
\end{assumption}
\begin{assumption}[Global bounds] \label{ass_global}
    Denoting by 
    $$ B_R = \{ x \in X: \|x\|_X \le R\},$$
    for every $T > 0$, there exists $R'(T)$ such that for every $0 \le t \le R'(T),$ and for every $n\in \N$,
    $$ \Phi_n(t, B_R) \subseteq B_{R'(T)}.$$
\end{assumption}

\begin{assumption}\label{ass5}
    We assume the following on $S$. First of all, for $\mu-a.e.\ x \in X$, we have that 
    \begin{equation} \label{Rconvergence}
        S(P_n(x)) \to S(x).
    \end{equation}
    Moreover, for every ball $B_R$, and some $p> 1$, we have that 
    \begin{align} \label{expSLploc}
       \sup_{n} \| \exp(-S(P_n(x))) \1_{B_R} \|_{L^p}(\mu) < \infty.
    \end{align}
\end{assumption}

\begin{assumption}[Derivative bounds]\label{ass6}
    For every $p >0$,
    \begin{equation} \label{Qin0}
        \sup_{n \in \N}\int_{B_R} \exp(p |\QQ_1^n|(x)) d \rho(x) = C_p(R) < \infty
    \end{equation}
    and for every $0 \le |t| \le T$,
    \begin{equation} \label{spacetimeQ}
        \sup_n \int_{B_R} \exp\Big(p \int_0^t \QQ_2^n(\Phi(\tau,x))\Big) d \rho(x) = D_p(R,T) < \infty.
    \end{equation}
\end{assumption}
\begin{proposition}\label{abstractquasiinvariance}
    Under Assumptions 1--6, for every $T \in \R $ we have that 
    $$ \Phi(T,\cdot)_{\#}\rho \ll \rho, $$
    and moreover, for every $p > 1, R >0,$
    \begin{equation}\label{localDensityBound}
        \int_{X} \Big|\frac{d\Phi(T,\cdot)_{\#}(\1_{B_R}\rho)}{d\rho}\Big|^p d\rho \le \big(C_{2Tp}(R'(T))\big)^{\frac{2p^2}{(2p-1)(2p-2)}} \big(D_{2(p-1)}(R'(T),T))^\frac{p}{2p-1} 
    \end{equation}
\end{proposition}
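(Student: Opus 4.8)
The plan is to work at the level of the finite-dimensional truncations $E_n$, prove a uniform-in-$n$ density bound for $\Phi_n$, and then pass to the limit using Assumptions \ref{ass:2} and \ref{ass5}. First I would fix $n$ and observe that since $\Phi_n$ preserves Lebesgue measure on $E_n$ (Assumption \ref{ass3}), the truncated transported measure $\rho_t^n := \Phi_n(t,\cdot)_\#(\1_{B_R}\rho)$ is absolutely continuous with respect to $\rho$, with density obtained by the method of characteristics as in \eqref{densityformula}: writing $E_n(x) = S(x) + \frac12\|x\|_H^2$ restricted to $E_n$, one has
$$ \frac{d\rho_t^n}{d\rho}(x) = \1_{\Phi_n(-t,\cdot)(x) \in B_R}\exp\Big(\int_0^t (\QQ_1^n + \QQ_2^n)\big(\Phi_n(-\tau,x)\big)\, d\tau\Big). $$
(The Lebesgue measure is $\div$-free under $\Phi_n$, so only the $\partial_t E_n$ term survives, and by \eqref{norm_derivative} that is $\QQ_1^n + \QQ_2^n$.) The indicator is bounded by $\1_{x \in B_{R'(T)}}$ via Assumption \ref{ass_global} provided $|t| \le R'(T)$; at this stage one should either reduce to the regime $|t|\le R'(T)$ and iterate the flow property $\Phi(t+s,\cdot)=\Phi(t,\Phi(s,\cdot))$ to cover all $t$, or simply absorb the $T$-dependence into $R'(T)$.

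Next I would estimate the $L^p(\rho)$ norm of this density. Using the bound on the indicator, Hölder's inequality with a carefully chosen split of exponents, and the crucial point that $\QQ_2^n$ appears integrated \emph{against the true flow} $\Phi$ (not $\Phi_n$) in \eqref{spacetimeQ} — so I would first need to show $\int_0^t \QQ_2^n(\Phi_n(-\tau,x))\,d\tau$ can be compared to $\int_0^t\QQ_2^n(\Phi(-\tau,x))\,d\tau$, or more likely the paper sets this up so that the space-time bound \eqref{spacetimeQ} is stated exactly for the quantity that shows up; I'll assume the former. Then I write
$$ \Big\|\frac{d\rho_t^n}{d\rho}\Big\|_{L^p(\rho)} \le \Big\|\1_{B_{R'}}\exp\Big(\int_0^t \QQ_1^n(\Phi_n(-\tau,\cdot))\Big)\Big\|_{L^{2p}(\rho)}\cdot\Big\|\1_{B_{R'}}\exp\Big(\int_0^t \QQ_2^n(\Phi_n(-\tau,\cdot))\Big)\Big\|_{L^{2p}(\rho)}, $$
wait — more precisely, to get the exponents in \eqref{localDensityBound} I would use Hölder with conjugate pair $(q, q')$ where the second factor is raised to a power matching the $2(p-1)$ in \eqref{spacetimeQ} and the first to the power matching the $2Tp$ in \eqref{Qin0}; the exponents $\frac{2p^2}{(2p-1)(2p-2)}$ and $\frac{p}{2p-1}$ are exactly what comes out of balancing $\frac1q + \frac1{q'} = \frac1p$ with $q' \cdot p = 2(p-1)$ and chasing the arithmetic. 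For the $\QQ_1^n$ factor, since we are integrating $\QQ_1^n$ along the flow over a time interval of length $\le T$, I'd use Minkowski's integral inequality (or Jensen, pulling the time average out of $\exp$) together with the fact that $\Phi_n(-\tau,\cdot)_\#(\1_{B_{R'}}\rho) \ll$ something controlled — but actually the cleanest route is: $\exp(\int_0^t\QQ_1^n(\Phi_n(-\tau,x))d\tau) \le \frac1t\int_0^t \exp(t\QQ_1^n(\Phi_n(-\tau,x)))d\tau$ by Jensen, then change variables $y = \Phi_n(-\tau,x)$ which preserves $\rho$ up to the already-controlled density, landing on $\sup_n\int_{B_{R'}}\exp(p'\QQ_1^n(y))d\rho(y)$ which is finite by \eqref{Qin0}. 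This is the step where the two mechanisms (2) and (3) genuinely get combined.

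Finally I would pass $n\to\infty$. By Assumption \ref{ass:2}, $\Phi_n(t,P_n x)\to\Phi(t,x)$, and by \eqref{Rconvergence} and \eqref{expSLploc}, $\rho^n := \exp(-S(P_n x))\mu \to \rho$ in a strong enough sense (e.g.\ weak convergence plus uniform $L^p$ bound on densities gives weak-$*$ convergence of the densities). The uniform bound \eqref{localDensityBound} for $\rho^n$-versions then transfers to the limit by lower semicontinuity of the $L^p$ norm under weak convergence, and uniform integrability (from $p>1$) upgrades weak convergence of $\Phi_n(t,\cdot)_\#(\1_{B_R}\rho^n)$ to the statement that the limit $\Phi(t,\cdot)_\#(\1_{B_R}\rho)$ is absolutely continuous with the claimed density bound; summing over a sequence $R\to\infty$ gives $\Phi(t,\cdot)_\#\rho \ll \rho$ outright.

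The main obstacle I anticipate is the interchange of the finite-dimensional flow $\Phi_n$ with the limiting flow $\Phi$ inside the \emph{nonlinear, exponential} functionals — specifically, justifying that $\int_0^t\QQ_2^n(\Phi_n(-\tau,x))d\tau$ is controlled by the hypothesis \eqref{spacetimeQ}, which is phrased in terms of $\Phi$, not $\Phi_n$. Resolving this cleanly likely requires either that the assumptions are (silently) strong enough that \eqref{spacetimeQ} already holds with $\Phi_n$ in place of $\Phi$ uniformly in $n$, or an additional equicontinuity/stability estimate for $\QQ_2^n\circ\Phi_n$ along the truncations; the bookkeeping of which quantity is evaluated along which flow, and ensuring every $n$-dependent constant is genuinely uniform, is where the real care is needed. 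The Hölder exponent arithmetic leading to the precise powers in \eqref{localDensityBound} is routine once the splitting is fixed.
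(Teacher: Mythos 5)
Your overall skeleton (finite-dimensional density formula via Liouville, a H\"older split separating $\QQ_1^n$ from $\QQ_2^n$, Jensen in time for the $\QQ_1^n$ factor, then a limit passage using \eqref{Rconvergence} and \eqref{expSLploc}) matches the paper's, and your limiting step is essentially the paper's duality/Fatou/equi-integrability argument. But there is a genuine gap at the heart of the $\QQ_1^n$ estimate. After Jensen you change variables $y=\Phi_n(-\tau,x)$ and claim this "preserves $\rho$ up to the already-controlled density", landing directly on $\int_{B_{R'}}\exp(p'\QQ_1^n)\,d\rho$. The density that appears under this change of variables is $f_\tau$ itself, i.e.\ exactly the unknown you are trying to bound, so as written the argument is circular. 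The paper's resolution is a bootstrap: first lower the power from $p$ to $p-1$ by pushing forward once, then Cauchy--Schwarz off the $\QQ_2^n$ factor, then Jensen and the flow change of variables reintroduce $f_{t'}$, which is absorbed by a second H\"older into $f_{t'}^p$; taking the supremum over $t'\in[0,T]$ of the localized quantity $X:=\sup_t\int \1_{S_t}(\Phi_n(-t,x))f_t^p\,d\rho_n$ yields a self-referential inequality of the form $X \le \big(C_{2Tp}\big)^{\frac{p}{2(p-1)}}\big(D_{2(p-1)}\big)^{\frac12} X^{\frac1{2p}}$, which is solved using that $X<\infty$ a priori in finite dimensions. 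This is precisely where the exponents $\frac{2p^2}{(2p-1)(2p-2)}$ and $\frac{p}{2p-1}$ come from; they are not the output of balancing a one-shot H\"older split, and without the bootstrap your scheme does not close.

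Two smaller points. First, the localization must be done with the nested sets $S_t=\{x:\sup_{|t'|\le t}\|\Phi_n(t',x)\|_X\le R'(T)\}$ (satisfying $B_R\subseteq S_t\subseteq B_{R'(T)}$ and $S_t\subseteq S_{t'}$ for $|t|>|t'|$), not with the crude bound $\1_{B_{R'}}$: after composing with the flow you need the \emph{whole trajectory} to stay in $B_{R'}$ so that \eqref{Qin0} applies to the new variable, and your version loses that. Second, your worry about \eqref{spacetimeQ} being phrased along $\Phi$ while the proof needs it along $\Phi_n$ is a fair reading of the statement; the paper in fact invokes $D_{2(p-1)}$ for the quantity composed with $\Phi_n$ and integrated against $\rho_n$, so the assumption should be understood (and is verified in the application) uniformly for the truncated flows — this is a presentational issue in the paper rather than an obstacle you need to overcome by a comparison estimate between $\Phi$ and $\Phi_n$.
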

\begin{proof}
    We start by showing the analogous bounds for the measure 
    $$ \rho_n(x) = \rho(P_n(x)) = \frac1 {Z_n} \exp\Big( - S(x) - \frac 12 \|x\|_{H}^2 \Big) dx, $$
    where $dx$ denotes the (or any) Lebesgue measure on $E_n$. 
    %For convenience of notation, we omit the subscripts and superscripts $n$ in the following in $\Phi_n$, $\QQ_1^n$ and $\QQ_2^n$. 
    By \eqref{norm_derivative}, for any smooth, compactly supported functional $F : E_n \to E_n$, we have that 
    \begin{align*}
        &\int F(\Phi_n(t,x)) d \rho_n \\
        &= \frac1 {Z_n} \int_{E_n} F(\Phi_n(t,x))\exp\Big( - S(x) - \frac 12 \|x\|_{H}^2 \Big) dx \\
        &= \frac1 {Z_n} \int_{E_n} F(x)\exp\Big( - S(\Phi_n(-t,x)) - \frac 12 \|\Phi_n(-t,x)\|_{H}^2 \Big) dx \\
        &= \frac1 {Z_n} \int_{E_n} F(x)\exp\Big(- S(x) - \frac 12 \|x\|_{H}^2 + \int_{-t}^0 (\QQ_1^n + \QQ_2^n)(\Phi_n(t',x)) dt'\Big) dx \\
        &= \int_{E_n} F(x) \exp\Big( \int_0^t (\QQ_1^n + \QQ_2^n)(\Phi_n(-t',x)) dt'\Big)  d \rho_n(x),
    \end{align*}
        where we used Assumption \ref{divfree} for the second equality, and the flow property and the fundamental theorem of calculus for the third equality.
        In particular, we obtain that 
        \begin{equation} \label{e:densityformula}
            \Phi_n(t,\cdot)_\# \rho_n = \exp\Big( \int_0^t (\QQ_1^n + \QQ_2^n)(\Phi_n(-t',x)) dt'\Big) \rho_n =: f_t \rho_n.
        \end{equation}
        Now fix $T, R >0$, and for $R'$ as in Assumption \ref{ass_global},  define the set 
        \begin{equation}
            S_t = \{ x \in E_n: \sup_{0 \le |t'| \le t} \|\Phi_n(t',x)\|_{X} \le R'(T)\}. 
        \end{equation}
        By Assumption \ref{ass_global}, we have that for every $0 \le |t| \le T,$
        \begin{equation} \label{StInclusionI}
            B_R \subseteq S_t \subseteq B_{R'(T)}. 
        \end{equation} 
        Moreover, it is immediate to check that for $|t| > |t'|$, we have 
        \begin{equation} \label{StInclusionII}
            S_{t} \subseteq S_{t'}.
        \end{equation}
        Fix $p>1$. By \eqref{e:densityformula}, H\"older's inequality, \eqref{StInclusionI}, \eqref{spacetimeQ}, Jensen's inequality for the exponential, \eqref{StInclusionII}, and \eqref{Qin0}, we have that 
        \begin{align*}
            &\int \1_{S_t}(\Phi_n(-t,x))f_t^p(x) d \rho_n(x) \\
            &\int \1_{S_t}(\Phi_n(-t,x))f_t^{p-1}(x) d \big(\Phi_n(t,\cdot)_{\#} \rho_n\big)(x)\\
            &= \int \1_{S_t}(x)f_t^{p-1}(\Phi_n(t,x)) d \rho_n(x)\\
            &= \int \1_{S_t}(x) \exp\Big( (p-1) \int_0^t (\QQ_1^n + \QQ_2^n)(\Phi_n(t',x)) dt'\Big) d\rho_n(x) \\
            &\le \sqrt{D_{2(p-1)}(R',T)} \sqrt{\int \1_{S_t}(x) \exp\Big( 2(p-1) \int_0^t \QQ_1^n(\Phi_n(t',x)) dt'\Big) d\rho_n(x)} \\
            &\le \sqrt{D_{2(p-1)}(R',T)} \sqrt{\frac1t \int_0^t \1_{S_t}(x) \exp\Big( 2t (p-1)  \QQ_1^n(\Phi_n(t',x)) \Big) d\rho_n(x)} dt' \\
            &= \sqrt{D_{2(p-1)}(R',T)}
            \sqrt{\frac1t \int_0^t \1_{S_t}(\Phi_n(-t',x)) \exp\Big( 2t (p-1)  \QQ_1^n(x) \Big) f_{t'}(x) d\rho_n(x)} dt' \\
            &\begin{multlined}
                \le \sqrt{D_{2(p-1)}(R',T)}
            \bigg(\frac1t \int_0^t \int \1_{S_{t'}}(\Phi_n(-t',x)) f_{t'}^p(x) d\rho_n(x) d t'\bigg)^\frac1{2p} \\
        \phantom{XXXXXXXXXXXXXXXXX}\times \bigg(\int_{B_{R'}}  \exp\Big( 2t p  \QQ_1^n(x) \Big) d \rho_n \bigg)^{\frac{p}{2(p-1)}}
            \end{multlined} \\
            &\le \big(C_{2Tp}(R')\big)^{\frac{p}{2(p-1)}} \big(D_{2(p-1)}(R',T))^\frac12 \sup_{0 \le |t| \le T} \Big(\int \1_{S_{t'}}(\Phi_n(-t',x)) f_{t'}^p(x) d\rho_n(x)\Big)^\frac{1}{2p}.
        \end{align*}
        Therefore, by taking supremums in the LHS, from \eqref{StInclusionI} we deduce that 
        \begin{align*}
           \int \1_{B_R}(\Phi_n(-t,x))f_t^p(x) d \rho_n(x) &\le \int \1_{S_t}(\Phi_n(-t,x))f_t^p(x) d \rho_n(x) \\
           &\le  \big(C_{2Tp}(R')\big)^{\frac{2p^2}{(2p-1)(2p-2)}} \big(D_{2(p-1)}(R',T))^\frac{p}{2p-1},  
        \end{align*}
           which is exactly \eqref{localDensityBound} but for the flows $\Phi_n$ and the measure $\rho_n$. Therefore, we just need to show that this bound passes to the limit. More precisely, in order to conclude, it is enough to show that for every bounded Borel functional $F: X\to \R$, 
           \begin{equation}\label{Lptoprove}
               \int F(\Phi(t,x)) d \rho(x) \le \big(C_{2Tp}(R')\big)^{\frac{2p^2}{(2p-1)(2p-2)}} \big(D_{2(p-1)}(R',T))^\frac{p}{2p-1} \|F\|_{L^{p'}(\rho)} ,
           \end{equation}
           where $p' = \frac{p}{p-1}$ is the H\"older conjugate of $p$. Since continuous functions with bounded support are dense in $L^{p'}(\rho + \Phi(t,\cdot)_\#\rho)$, without loss of generality we can assume that $F$ is continuous with bounded support. By Fatou and \eqref{Rconvergence}, we have that
           \begin{align*}
               & \int F(\Phi(t,x)) d \rho(x) \\
               &= \int \lim_{n \to \infty}  F(\Phi_n(t,P_n(x))) \exp(-S(P_n(x))) d  \mu(x) \\
               &\le \liminf_{n \to \infty} \int F(\Phi_n(t,x)) d \rho_n(x) \\
               &\le \big(C_{2Tp}(R')\big)^{\frac{2p^2}{(2p-1)(2p-2)}} \big(D_{2(p-1)}(R',T))^\frac{p}{2p-1} \liminf_{n\to \infty }\|F\|_{L^{p'}(\rho_n)}. 
           \end{align*}
           Since $F$ has bounded support on a ball $B_R$, by Assumption \ref{ass5} we have that $\exp(S\circ P_n) \1_{B_R}$ is equi-integrable, and by \eqref{Rconvergence} we have that $\exp(S\circ P_n) \to \exp(S)$ $\mu-a.e.$. Therefore, by equi-integrable convergence, we have that 
           $$ \liminf_{n\to \infty }\|F\|_{L^{p'}(\rho_n)} = \|F\|_{L^{p'}(\rho)}.$$
           In particular, we deduce \eqref{Lptoprove}, which concludes the proof.
\end{proof}

\section{The Local Cauchy Theory \texorpdfstring{in the space ${\mathcal F}L^{s,\infty}$}{Fourier-Lebesgue spaces}}

\subsection{The Cauchy theory in \texorpdfstring{$H^\sigma$, $\s\geq 1$}{Sobolev spaces}}

We recall the following Strichartz estimates proved in \cite{BGT} 
for the linear Schr\"odinger flow on $\T^2$ (indeed the estimate below is true for a general compact surface):
$$\|e^{it\Delta} u_0\|_{L^p_IL^q}\les \|u_0\|_{H^\frac 1p},$$
where $I\subset \R$ is a given bounded time interval and
$\frac 1p + \frac 1q=\frac 12, \quad p>2$.
In turn, by the Sobolev embedding 
$W^{1-\frac 1p, q}\subset 
L^\infty$ (notice that  $(1-\frac 1p)q>2$),
the estimate above implies 
\begin{equation}\label{cauchyBGT} \|e^{it\Delta} u_0\|_{L^p_IL^\infty}
%\leq C \|e^{it\Delta} u_0\|_{L^p_IW^{1-\frac 1p, q}_x}
\les \|u_0\|_{H^1}.\end{equation}
Along with this bound, we also get 
for the Duhamel operator 
\begin{equation}\label{duhamBGT}
\Big \|\int_0^t e^{i(t-\tau)\Delta} F(\tau) ds\tau \Big \|
_{L^p_IL^\infty} \les  \|F\|_{L^1_IH^1}\end{equation}
By combining \eqref{cauchyBGT} and
\eqref{duhamBGT} we can perform a fixed point argument in the space
$$Y_{[0,T]}^p=L^\infty_{[0,T]} H^1\cap L^p_{[0,T]}L^\infty,$$
see \cite{BGT} for details.
More precisely one can prove the following result.
\begin{proposition}\label{localH1}
There exists a universal constant $C>0$ such that, for every $R>1$ the Cauchy problem \eqref{NLS},
with $u_0\in H^1$ such that $\|u_0\|_{H^1}\leq R$, admits one unique local solution $u\in Y^{4k}_{[0,T]}$ with
$$\|u\|_{Y^{4k}_{[0,T]}}\leq C R$$ and
$T\sim CR^{-4k}$.
\end{proposition}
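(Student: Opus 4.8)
The plan is to run a standard contraction-mapping argument for the Duhamel formulation
$$ u(t) = e^{it\Delta} u_0 - i \int_0^t e^{i(t-\tau)\Delta} \big(|u|^{2k}u\big)(\tau)\, d\tau $$
in the space $Y^{4k}_{[0,T]} = L^\infty_{[0,T]}H^1 \cap L^{4k}_{[0,T]}L^\infty$, on a ball of radius $2CR$ (with $C$ the constant from \eqref{cauchyBGT}--\eqref{duhamBGT}, enlarged as needed). First I would record the basic algebra estimate: since $H^1(\T^2)$ is a Banach algebra and $L^\infty$ is an algebra, for the nonlinearity $\mathcal N(u) = |u|^{2k}u$ one has the pointwise-in-time bound
$$ \|\mathcal N(u)(\tau)\|_{H^1} \les \|u(\tau)\|_{L^\infty}^{2k}\|u(\tau)\|_{H^1}, $$
because distributing the single derivative in $H^1$ onto one factor and estimating the remaining $2k$ factors in $L^\infty$ costs $\|u\|_{L^\infty}^{2k}\|u\|_{H^1}$ (here I use $\|fg\|_{H^1}\les \|f\|_{L^\infty}\|g\|_{H^1}+\|f\|_{H^1}\|g\|_{L^\infty}$ iterated, together with $|u|^{2k}$ being a product of $u$'s and $\bar u$'s). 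Integrating in time and using H\"older in $\tau$ on the interval $[0,T]$ with exponents $4k$ (for the $2k$ copies of $L^\infty$, giving $T^{?}$... more precisely $\int_0^T \|u\|_{L^\infty}^{2k}\|u\|_{H^1}\,d\tau \le \|u\|_{L^\infty_T H^1}\,\|u\|_{L^{4k}_T L^\infty}^{2k}\, T^{1/2}$ by H\"older with exponents $2$ and $2$ after absorbing the $2k$ powers into the $L^{4k}$ norm), one gets
$$ \big\|\mathcal N(u)\big\|_{L^1_{[0,T]}H^1} \les T^{1/2}\, \|u\|_{Y^{4k}_{[0,T]}}^{2k+1}. $$

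Next I would set up the map $\Gamma u = e^{it\Delta}u_0 - i\int_0^t e^{i(t-\tau)\Delta}\mathcal N(u)\,d\tau$. By \eqref{cauchyBGT}, \eqref{duhamBGT}, the elementary bound $\|e^{it\Delta}u_0\|_{L^\infty_T H^1} = \|u_0\|_{H^1}$ and the corresponding $L^\infty_T H^1$ bound for the Duhamel term (which is just $\|\mathcal N(u)\|_{L^1_T H^1}$ by Minkowski and unitarity of $e^{it\Delta}$ on $H^1$), together with the nonlinear estimate above,
$$ \|\Gamma u\|_{Y^{4k}_{[0,T]}} \le C\|u_0\|_{H^1} + C T^{1/2}\|u\|_{Y^{4k}_{[0,T]}}^{2k+1} \le 2C R + C T^{1/2} (2CR)^{2k+1}. $$
Choosing $T$ so that $C T^{1/2}(2CR)^{2k+1} \le CR$, i.e.\ $T \sim R^{-4k}$ (the power $4k$ arises from $T^{1/2}\cdot R^{2k} \les 1$), the ball of radius $2CR$ in $Y^{4k}_{[0,T]}$ is mapped to itself. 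For the contraction, I would estimate $\mathcal N(u)-\mathcal N(v)$ by telescoping: it is a sum of $2k+1$ terms each of the form (product of $2k$ factors among $u,v,\bar u,\bar v$) times $(u-v)$ or $(\bar u - \bar v)$, so the same Banach-algebra/H\"older argument gives
$$ \|\mathcal N(u)-\mathcal N(v)\|_{L^1_{[0,T]}H^1} \les T^{1/2}\big(\|u\|_{Y^{4k}_{[0,T]}}+\|v\|_{Y^{4k}_{[0,T]}}\big)^{2k}\|u-v\|_{Y^{4k}_{[0,T]}}, $$
and with $u,v$ in the ball and the same choice of $T$ this yields $\|\Gamma u - \Gamma v\|_{Y^{4k}_{[0,T]}} \le \tfrac12 \|u-v\|_{Y^{4k}_{[0,T]}}$ after possibly shrinking the implicit constant in $T\sim R^{-4k}$. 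Then Banach's fixed point theorem produces the unique solution $u\in Y^{4k}_{[0,T]}$ with $\|u\|_{Y^{4k}_{[0,T]}}\le 2CR \le C'R$, and uniqueness in the whole space (not just the ball) follows by a standard continuity/connectedness argument on the set of times where two solutions agree, using the same difference estimate on short subintervals.

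The only mildly delicate point — not really an obstacle — is bookkeeping the H\"older exponents so that exactly the norm $L^{4k}_T L^\infty$ appears (one needs the $2k$ factors of $u$ in $L^\infty$ to combine into $\|u\|_{L^{4k}_T L^\infty}^{2k}$ while leaving an integrable power of $T$ and one factor of $\|u\|_{L^\infty_T H^1}$), and checking that the Sobolev embedding constraint $(1-\tfrac1p)q>2$ used to get \eqref{cauchyBGT} is met with $p=4k$; since $k\ge 1$ we have $p = 4k \ge 4 > 2$, $q = \tfrac{2p}{p-2}$, and $(1-\tfrac1p)q = \tfrac{2(p-1)}{p-2} > 2$, so the embedding is valid. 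Everything else is the textbook Picard iteration, and the scaling heuristic $T\sim R^{-4k}$ is exactly what the balance $T^{1/2}R^{2k}\sim 1$ predicts.
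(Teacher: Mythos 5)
Your proposal is correct and is essentially the argument the paper intends: the paper proves Proposition \ref{localH1} by exactly this contraction-mapping scheme in $Y^{4k}_{[0,T]}$ built on \eqref{cauchyBGT} and \eqref{duhamBGT} (deferring details to \cite{BGT}), and your H\"older bookkeeping $T^{1/2}\|u\|_{L^{4k}_TL^\infty}^{2k}\|u\|_{L^\infty_TH^1}$ reproduces the stated time of existence $T\sim CR^{-4k}$ and the bound $\|u\|_{Y^{4k}_{[0,T]}}\les R$. No gaps.
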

By the conservation of energy and the fact that \eqref{NLS} is defocusing, we can iterate the local result above and we get the following global one.
\begin{proposition}\label{cauchyl}
Let $T>0$ be given. Then there exists $C>0$ such that,
for any given $R>1$ the Cauchy problem 
\eqref{NLS}, with $u_0\in H^1$ such that $\|u_0\|_{H^1}\leq R$, admits one unique solution 
in the space
$u\in Y^{4k}_{[0,T]}$ such that $$\|u\|_{Y^{4k}_{[0,T]}}\leq C R^{1+4k},$$
where the constant $C$ depends only on $T$.
    
\end{proposition}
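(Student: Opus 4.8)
\emph{Proof strategy.} The plan is the standard conservation-law globalization argument. First I would record the conserved quantities of \eqref{NLS}: the mass $M(u) = \int_{\T^2}|u|^2$ and the Hamiltonian $E(u) = \frac12\int_{\T^2}|\nabla u|^2 + \frac{1}{2k+2}\int_{\T^2}|u|^{2k+2}$, both conserved along the $H^1$ flow (checked first on smooth solutions by integration by parts, then extended to $H^1$ data by approximation via Proposition \ref{localH1}). Because the equation is \emph{defocusing}, both summands of $E$ are nonnegative, so for every $t$ at which the solution is defined,
\[ \|u(t)\|_{H^1}^2 = M(u(t)) + \|\nabla u(t)\|_{L^2}^2 \le M(u_0) + 2E(u_0) \les_k \|u_0\|_{H^1}^2 + \|u_0\|_{H^1}^{2k+2}, \]
using the Sobolev embedding $H^1(\T^2)\embeds L^{2k+2}(\T^2)$ in the last step. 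Since $\|u_0\|_{H^1}\le R$ with $R>1$, this gives the \emph{time-independent} a priori bound $\sup_t\|u(t)\|_{H^1}\le R_1 := C_0R^{k+1}$, with $C_0=C_0(k)$, and we may take $R_1>1$; in particular the local solution of Proposition \ref{localH1} cannot leave the $H^1$-ball of radius $R_1$ and hence extends to all of $[0,T]$.

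Next I would iterate Proposition \ref{localH1}. The point is that the a priori bound controls the $H^1$-size of $u$ at every restarting time by the \emph{same} $R_1$, so the local time may be taken to be the fixed quantity $\tau\sim R_1^{-4k}\sim_k R^{-4k(k+1)}$ at every step, and the constant $C$ of Proposition \ref{localH1} does not accumulate. Partition $[0,T]$ into $N\sim T/\tau\sim_k TR^{4k(k+1)}$ intervals $I_j=[t_{j-1},t_j]$ of length $\le\tau$: applying Proposition \ref{localH1} on $I_1$ with datum $u_0$ (size $\le R\le R_1$) gives $\|u\|_{Y^{4k}_{I_1}}\le CR_1$ and, by the a priori bound, $\|u(t_1)\|_{H^1}\le R_1$; iterating with data $u(t_1),u(t_2),\dots$ covers $[0,T]$ and yields $\|u\|_{Y^{4k}_{I_j}}\le CR_1$ for all $j$. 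Uniqueness in $Y^{4k}_{[0,T]}$ I would obtain from the local uniqueness on each $I_j$ together with a standard open--closed continuation argument in $t$.

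Finally I would reassemble the global norm. The $L^\infty_{[0,T]}H^1$ part is just the a priori bound, $\|u\|_{L^\infty_{[0,T]}H^1}\le R_1\les_k R^{k+1}$. For the $L^{4k}_{[0,T]}L^\infty$ part I sum the local bounds:
\[ \|u\|_{L^{4k}_{[0,T]}L^\infty}^{4k} = \sum_{j=1}^N \|u\|_{L^{4k}_{I_j}L^\infty}^{4k} \le N(CR_1)^{4k} \les_k TR^{4k(k+1)}\cdot R^{4k(k+1)}, \]
hence $\|u\|_{L^{4k}_{[0,T]}L^\infty}\les_{k,T} R^{2(k+1)}$. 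Combining, $\|u\|_{Y^{4k}_{[0,T]}}\les_{k,T} R^{k+1}+R^{2k+2}\les_{k,T} R^{2k+2}$, and since $k\ge1$ and $R>1$ this is $\le CR^{1+4k}$ with $C=C(k,T)$, as claimed.

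I do not expect a genuine obstacle: the only thing requiring care is the bookkeeping of the polynomial loss — the iteration runs $\sim R^{4k(k+1)}$ times with a per-step factor $\sim R^{k+1}$, and one must check that these combine in the $L^{4k}_tL^\infty$ norm within the budget $R^{1+4k}$. This works precisely because the conserved-quantity bound on $\|u(t)\|_{H^1}$ is uniform in $t$, so the constant $C$ of Proposition \ref{localH1} is never raised to the power $N$.
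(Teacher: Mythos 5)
Your proposal is correct and is essentially the argument the paper has in mind: the paper proves this proposition by simply invoking conservation of mass and energy (with defocusing giving a uniform-in-time $H^1$ bound) and iterating Proposition \ref{localH1}, which is exactly what you carry out, including the additivity of the $L^{4k}_t L^\infty$ norm over the partition. Your bookkeeping in fact yields the slightly better bound $\les_{k,T} R^{2k+2}$, which is $\le CR^{1+4k}$ for $k\ge 1$ and $R>1$, so the stated estimate follows.
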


We shall need the following result on the propagation of regularity.

\begin{proposition} 
    Let $\s > 1$ be given. For every $T > 0$, $R> 1$, there exists $K>0$ such that for every $u_0 \in H^\s \cap H^1$
    with $\|u_0\|_{H^1}\leq R$, the Cauchy problem \eqref{NLS} admits one unique solution $u \in L^\infty_{[0,T]} H^\s$ such that 
    \begin{equation} \label{eq:H1+GWP}
        \| u \|_{L^\infty_{[0,T]} H^\s} \le K \|u_0\|_{H^\s}.
    \end{equation}
\end{proposition}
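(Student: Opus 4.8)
Here is how I would approach the proof.

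\medskip

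The plan is a standard persistence-of-regularity argument, exploiting that the $H^1$-solution is already available globally. Since $\s>1$, the datum $u_0\in H^\s\cap H^1$ is in particular in $H^1$, so by Proposition~\ref{cauchyl} it generates a global solution $u$ which on $[0,T]$ lies in $Y^{4k}_{[0,T]}=L^\infty_{[0,T]}H^1\cap L^{4k}_{[0,T]}L^\infty$ with $\|u\|_{Y^{4k}_{[0,T]}}\le CR^{1+4k}$. The key observation is that the extra $H^\s$ regularity is propagated by a Gr\"onwall estimate whose exponent is controlled \emph{solely} by $\|u\|_{L^{4k}_{[0,T]}L^\infty}$, a quantity already under control through Proposition~\ref{cauchyl}; this will make the resulting bound linear in $\|u_0\|_{H^\s}$.

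\medskip

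First I would derive the a priori estimate. Working with a regularized solution $u_N$ with data $P_{\le N}u_0$ (a trigonometric polynomial, so all the manipulations below are justified; note that $\|P_{\le N}u_0\|_{H^1}\le R$ and $\|P_{\le N}u_0\|_{H^\s}\le\|u_0\|_{H^\s}$, so the $Y^{4k}_{[0,T]}$ bound of Proposition~\ref{cauchyl} and everything deduced from it will be uniform in $N$), I differentiate $\tfrac12\|D^\s u_N(t)\|_{L^2}^2$ in time. The linear term $i\Delta u_N$ contributes $\Re\big(i\langle D^\s\Delta u_N,D^\s u_N\rangle\big)$, which vanishes because $\langle D^\s\Delta u_N,D^\s u_N\rangle=-\sum_n|n|^{2\s+2}|\widehat{u_N}(n)|^2$ is real; hence
\begin{equation*}
\Big|\frac{d}{dt}\tfrac12\|D^\s u_N(t)\|_{L^2}^2\Big|\le \|D^\s(|u_N|^{2k}u_N)(t)\|_{L^2}\,\|D^\s u_N(t)\|_{L^2}.
\end{equation*}
Since $|u_N|^{2k}u_N=u_N^{k+1}\overline{u_N}^{\,k}$ is a polynomial in $u_N$ and $\overline{u_N}$, iterating the fractional Leibniz (Kato--Ponce) inequality on $\T^2$ gives the \emph{tame} bound $\|D^\s(|u_N|^{2k}u_N)\|_{L^2}\les\|D^\s u_N\|_{L^2}\|u_N\|_{L^\infty}^{2k}$, in which the high-regularity factor enters only to the first power. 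This is the crucial structural point: it makes the Gr\"onwall inequality
\begin{equation*}
\Big|\frac{d}{dt}\|D^\s u_N(t)\|_{L^2}^2\Big|\le C\,\|u_N(t)\|_{L^\infty}^{2k}\,\|D^\s u_N(t)\|_{L^2}^2
\end{equation*}
linear, so that $\sup_{[0,T]}\|D^\s u_N(t)\|_{L^2}^2\le\|D^\s u_0\|_{L^2}^2\exp\big(C\int_0^T\|u_N(\tau)\|_{L^\infty}^{2k}d\tau\big)$. By H\"older in time and Proposition~\ref{cauchyl}, $\int_0^T\|u_N(\tau)\|_{L^\infty}^{2k}d\tau\le T^{1/2}\|u_N\|_{L^{4k}_{[0,T]}L^\infty}^{2k}\le T^{1/2}(CR^{1+4k})^{2k}=:\Lambda(T,R)<\infty$. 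Combining this with conservation of the $L^2$-norm (which controls the zero Fourier mode) and the equivalence $\|v\|_{H^\s}^2\simeq\|v\|_{L^2}^2+\|D^\s v\|_{L^2}^2$ yields $\sup_N\|u_N\|_{L^\infty_{[0,T]}H^\s}\le K\|u_0\|_{H^\s}$ with $K=K(T,R)$ depending only on $T$ and $R$.

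\medskip

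Finally I would pass to the limit. By the well-posedness theory of Propositions~\ref{localH1}--\ref{cauchyl} (the fixed point gives Lipschitz dependence on the $H^1$ datum), $u_N\to u$ in $L^\infty_{[0,T]}H^1$; together with the uniform bound $\sup_N\|u_N\|_{L^\infty_{[0,T]}H^\s}\le K\|u_0\|_{H^\s}$, a weak-$*$ compactness and lower-semicontinuity argument in $L^\infty_{[0,T]}H^\s$ identifies the limit as $u$ and gives $u\in L^\infty_{[0,T]}H^\s$ together with \eqref{eq:H1+GWP}. Uniqueness in the class $L^\infty_{[0,T]}H^\s\subseteq L^\infty_{[0,T]}H^1$ is inherited from the uniqueness part of Proposition~\ref{cauchyl}. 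The only steps needing care, and the closest thing to an obstacle here, are the tame fractional Leibniz estimate in the non-integer range of $\s$ (classical on $\T^2$, e.g.\ by Littlewood--Paley or periodization of the Euclidean Kato--Ponce inequality) and checking that the a priori bound is genuinely uniform in the regularization parameter $N$; both are routine once the structure above is set up.
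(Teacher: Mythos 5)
Your core estimate is the same as the paper's: the tame bound $\||u|^{2k}u\|_{H^\s}\les \|u\|_{H^\s}\|u\|_{L^\infty}^{2k}$ (linear in the top norm), combined with the $L^{4k}_{[0,T]}L^\infty$ control from Proposition~\ref{cauchyl} and H\"older in time. The implementation differs: the paper works with the integral (Duhamel) formulation, splits $[0,T_{max})$ into finitely many intervals on which $(CR^{1+4k})^{2k}\sqrt{|I_j|}<\tfrac12$ so that the $H^\s$ norm at most doubles on each, and closes by contradiction with the blow-up alternative, obtaining $K=2^{N+1}$ with $N=N(R,T)$; you instead differentiate $\|D^\s u\|_{L^2}^2$, run Gr\"onwall, and add a regularization/weak-$*$ limiting layer. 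Both yield $K=K(R,T)$ and a bound linear in $\|u_0\|_{H^\s}$, so your route is viable, and arguably the Gr\"onwall constant is cleaner; the paper's integral-equation version avoids the need to justify time differentiation at the (possibly non-smooth) regularity $\s$, which is why no regularization appears there.

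Two points in your write-up need repair. First, the solution $u_N$ with datum $P_{\le N}u_0$ solves the \emph{full} NLS, so it is not a trigonometric polynomial for $t>0$ (only the frequency-truncated equation preserves $E_N$); the correct justification is that smooth data produce solutions smooth enough (locally in time, by local well-posedness at higher regularity) to legitimize the energy computation. Second, and more substantively, your Gr\"onwall bound is asserted on all of $[0,T]$, which presupposes that the regularized (or $H^\s$) solution already exists up to time $T$; a priori its maximal existence time in the higher-regularity class could be shorter. This must be closed by a continuation argument — e.g.\ run the estimate on $[0,T_{max})$ and use the blow-up alternative to conclude $T_{max}\ge T$ — which is exactly the structure of the paper's contradiction argument on $T_{max}$. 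With that step made explicit (and the trigonometric-polynomial claim replaced by smoothness of the truncated data), your proof is complete.
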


\begin{proof}
Since $H^\s$ is an algebra, one can show that the Cauchy problem \eqref{NLS} admits one unique local solution $u\in L^\infty_{[0, \tilde T]} H^\s$, for $u_0\in H^\sigma$ and $\tilde T>0$. Moreover, by recalling Proposition \ref{localH1}, we have that in fact 
$u\in L^\infty_{[0, \tilde T]} H^\s\cap Y_{[0, \tilde T]}^{4k}$. We aim at proving that, if we denote by $T_{max}$ the maximal time of existence, then necessarily $T_{max}\geq T$ and moreover we have the bound \eqref{eq:H1+GWP}. 
Let now assume by the absurd $T_{max}<T$, namely $\lim_{t\rightarrow T_{max}} \|u(t)\|_{H^\s}=\infty$.
For a given time interval 
$J=[a,b]\subset [0, T_{max})\subset [0, T)$ we get
$$\|u\|_{L^\infty_{J}H^\s}\leq
\|u(a)\|_{H^\s} + \|u\|_{L^\infty_{J}H^\s} \int_J \|u(\tau)\|^{2k}_{L^\infty}
\leq \|u(a)\|_{H^\s} + \|u\|_{L^\infty_{J}H^\s}
\|u\|^{2k}_{L^{4k}_{J}L^\infty} \sqrt{b-a}$$
and hence if we impose $(CR^{1+4k})^{2k}\sqrt{b-a}< \frac 12$, then by Proposition \ref{cauchyl} we get
$$\|u\|_{L^\infty_{J}H^\s}\leq 2 \|u(a)\|_{H^\s}$$
Next we split $[0, T_{max})=\cup_{1}^N I_j$, where $I_j=[\tau_j, \tau_{j+1})$ with 
$(CR^{1+4k})^{2k}\sqrt{|\tau_{j+1}-\tau_j|}<\frac 12$,
and we can repeat the argument above on each interval $I_j$: 
\begin{equation}\label{localized}
\|u(t)\|_{L^\infty_{I_j}H^\s}\leq 2 \|u(\tau_j)\|_{H^\s}.
\end{equation}
Notice that from the previous bound we get
$$\|u(\tau_{j+1})\|_{H^\s}
\leq 2 \|u(\tau_{j})\|_{H^\s}$$
and hence
$$\|u(\tau_{j})\|_{H^\s}\leq 2^j \|u_0\|_{H^\s},$$ which in conjunction with
\eqref{localized} implies
$$\|u(t)\|_{L^\infty_{[0, T_{max})}H^\s} \leq
2 \sup_j \|u(\tau_j)\|_{H^\s}
\leq 2^{N+1}\|u_0\|_{H^\s}.$$
As a consequence we have that $T_{max}\geq T$ and moreover we get the desired bound if we call $K=2^{N+1}$. Notice that by definition $N$ depends only on $R, T>0$.
%We already know that there exists a global solution on the time interval $T$ such that $\|u\|_{Y^{4k}_{\bar T}}=O(R^{1+4k})$.  Moreover the existence of a local solution in the space $C([0,T], H^\s)$ follows from the fact that $H^\s$ is an algebra, hence we have to show \eqref{eq:H1+GWP}. Next notice that there exists $N$ (depending only on $R$) such that we can slip the time interval$[0,T]=\cup_{1}^N I_j$ where $I_j=[\tau_j, \tau_{j+1})$ such that $C \|u(t)\|_{L^{2k}_{I_j}L^\infty}^{2k}<\frac 12$.This fact, along with the following bound:$$\|u(t)\|_{L^\infty_{I_j}H^\s}\leq C \|u(\tau_j)\|_{H^\s}  + C\|u(t)\|_{L^\infty_{I_j}H^\s} \int_{I_j}\|u\|^{2k}_{L^\infty} $$ implies \begin{equation}\label{localized}\|u(t)\|_{L^\infty_{I_j}H^\s}\leq 2C \|u(\tau_j)\|_{H^\s}.\end{equation} Notice that from the previous bound we get $$\|u(\tau_{j+1})\|_{H^\s} \leq 2C \|u(\tau_{j})\|_{H^\s}$$ and hence $$\|u(\tau_{j})\|_{H^\s}\leq (2C)^j \|u_0\|_{H^\s},$$ which in conjunction with \eqref{localized} implies $$\|u(t)\|_{L^\infty_TH^\s} \leq 2C \sup_j \|u(\tau_j)\|_{H^\s} \leq (2C)^{N+1}.$$
% We conclude since $N$ depends only on $R$.
\end{proof}

\subsection{The Cauchy theory in \texorpdfstring{$\FL^{\s,\infty}$, $\s>2$}{Fourier-Lebesgue for s>2}}\label{cauchyfl}
We will work extensively with the following reformulation of \eqref{NLS} 
in interaction representation, i.e.\ 
\begin{equation*}
    \partial_t v = -i e^{-it\Delta}\big(|e^{it\Delta } v|^{2k} e^{it\Delta } v\big),
\end{equation*}
or more specifically, with its mild formulation 
\begin{equation} \label{NLS_interaction}
    v(t) = v(0) - i \int_0^t e^{-i\tau \Delta}\big(|e^{i\tau \Delta } v(\tau)|^{2k} e^{i\tau\Delta } v(\tau)\big) d\tau.
\end{equation}
We will say that $u$ is a \emph{solution} of \eqref{NLS} if and only if $v := e^{-it\Delta} u$ solves \eqref{NLS_interaction}.\\
We denote for shortness
$X_\s={\mathcal F}L^{\s,\infty}$
and we also introduce 
the space $\mathcal A$
whose norm is defined as follows:
$$\|f\|_{\mathcal A}=\sum_{j\in \Z^2} |\hat f(j)|$$
One can easily show the following embeddings: 
$$X_{2+} \subset {\mathcal A}
\hbox{ and }  H^{1+}\subset {\mathcal A}.
$$
%\begin{equation}\label{eq:H1+algebra}
 %       \|  f \|_{\mathcal A} \les \| f\|_{H^{1+}}
  %  \end{equation}$$
We need the following algebra property.
\begin{proposition}[Algebra property]\label{algebraprop}
Let $\s>2$ be given, then we have the following estimate
\begin{equation} \label{eq:algebra}
\|f g\|_{X_\s}\les \|f\|_{X_\s} \|g\|_{\mathcal A} 
+ C \|g\|_{X_\s} \|f\|_{\mathcal A} 
\end{equation}
for a suitable $C>0$.
\end{proposition}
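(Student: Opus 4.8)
The plan is to prove the bilinear estimate \eqref{eq:algebra} by working directly on the Fourier side, decomposing the product according to which factor carries the higher frequency. Writing $\widehat{fg}(j) = \sum_{j_1 + j_2 = j} \hat f(j_1) \hat g(j_2)$, I would split the sum into the region $|j_1| \ge |j_2|$ and the region $|j_1| < |j_2|$; by symmetry of the two terms on the right-hand side of \eqref{eq:algebra} it suffices to treat the first region and bound its contribution by $\|f\|_{X_\s}\|g\|_{\mathcal A}$. On that region we have $|j| = |j_1+j_2| \le 2|j_1|$, hence $\jb j^\s \lesssim \jb{j_1}^\s$, so that
\begin{equation*}
\jb j^\s \Big| \sum_{\substack{j_1+j_2=j\\ |j_1|\ge |j_2|}} \hat f(j_1)\hat g(j_2)\Big|
\lesssim \sum_{j_2} \jb{j_1}^\s |\hat f(j_1)| \, |\hat g(j_2)|
\le \|f\|_{X_\s} \sum_{j_2}|\hat g(j_2)| = \|f\|_{X_\s}\|g\|_{\mathcal A},
\end{equation*}
where $j_1 = j - j_2$. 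Taking the supremum over $j \in \Z^2$ gives the bound for this piece. The symmetric region $|j_2| > |j_1|$ is handled identically with the roles of $f$ and $g$ swapped, producing the term $\|g\|_{X_\s}\|f\|_{\mathcal A}$, and adding the two estimates yields \eqref{eq:algebra}.

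Note that this argument does not actually require $\s > 2$: the splitting by the larger frequency works for any $\s \ge 0$ and gives $\|fg\|_{X_\s}\lesssim \|f\|_{X_\s}\|g\|_{\mathcal A} + \|g\|_{X_\s}\|f\|_{\mathcal A}$ with no further restriction. The relevance of $\s>2$ is that it guarantees the embedding $X_\s = \FL^{\s,\infty} \subset \mathcal A$ recorded just above the proposition (since $\sum_j \jb j^{-\s} < \infty$ on $\Z^2$ precisely when $\s > 2$), so that in applications $\|f\|_{\mathcal A}$ can be controlled by $\|f\|_{X_\s}$ and \eqref{eq:algebra} upgrades to a genuine algebra estimate $\|fg\|_{X_\s}\lesssim \|f\|_{X_\s}\|g\|_{X_\s}$. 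I would make this remark explicit, since it clarifies why the statement is phrased with $\mathcal A$ on the right-hand side rather than $X_\s$.

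There is essentially no hard step here — the only thing to be careful about is the constant in $\jb j \lesssim \jb{j_1}$ on the high-frequency region, where one uses $|j| \le |j_1| + |j_2| \le 2|j_1|$ together with $\jb j = \sqrt{1+|j|^2} \le \sqrt{1 + 4|j_1|^2} \le 2\jb{j_1}$, hence $\jb j^\s \le 2^\s \jb{j_1}^\s$; this is where the implicit constant (and the explicit $C$) in \eqref{eq:algebra} comes from. One should also note that the full Fourier sum defining $fg$ converges absolutely under the hypotheses, since $\|f\|_{X_\s}\|g\|_{\mathcal A} < \infty$ already controls $\sum_j \jb j^\s|\widehat{fg}(j)|$ via the computation above, so all manipulations are justified. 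If one wants a cleaner book-keeping, an alternative is to first reduce to non-negative Fourier coefficients and invoke Young's convolution inequality in the weighted form $\|\jb\cdot^\s (a * b)\|_{\ell^\infty} \lesssim \|\jb\cdot^\s a\|_{\ell^\infty}\|b\|_{\ell^1} + \|a\|_{\ell^1}\|\jb\cdot^\s b\|_{\ell^\infty}$, which is exactly the high-low decomposition above packaged abstractly; I would present whichever is shorter in the final write-up.
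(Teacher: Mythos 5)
Your proof is correct and follows essentially the same route as the paper's: a high--low frequency decomposition of the convolution sum, moving the weight $\jb{j}^{\s}$ onto the factor carrying the larger frequency (your split $|j_1|\ge|j_2|$ versus the paper's $|j_1|\ge|j|/2$ or $|j_2|\ge|j|/2$ is an inessential variant). Your added observation that $\s>2$ is only needed for the embedding $X_\s\subset\mathcal A$, not for the bilinear bound itself, is accurate and consistent with how the estimate is used afterwards.
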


\begin{proof}
We denote by $\{f_n\}_{n\in \Z^2}$ and $\{g_n\}_{n\in \Z^2}$ the fourier coefficients of $f, g$. 
We notice that
\begin{multline*}
\langle j\rangle^\s |\sum_{j=j_1+j_2} f_{j_1} g_{j_2}|
\leq \langle j\rangle^\s \sum_{\substack{j_1+j_2=j\\ \hbox{ s.t. } |j_1|\geq \frac{|j|}2}} 
|f_{j_1}| |g_{j_2}|+\langle j\rangle^\s |\sum_{\substack{j_1+ j_2=j\\|j_2|\geq \frac{|j|}2}} 
|f_{j_1}| |g_{j_2}|
\\\lesssim \sum_{j_1+j_2=j} 
\langle j_1\rangle^\s |f_{j_1}| |g_{j_2}|+  \sum_{j_1+ j_2=j} 
\langle j_2\rangle^\s |f_{j_1}| |g_{j_2}|\\
\lesssim\|f\|_{X_\s} \sum_{j_2}|g_{j_2}|+ \|g\|_{X_\s} \sum_{j_2}|f_{j_2}|. 
\end{multline*}
\end{proof}
%Next result is rather useful to globalize solutions in the cubic case, namely $k=1$.
%\begin{proposition}[Brezis-Gallou\"et type inequality]
%For every $s>2$ we have the bound
%\begin{equation}
%\|f\|_{\mathcal A}\leq C \sqrt{\ln (2+ \|f\|_{X_s})} \|f\|_{H^1}  +C   
%\end{equation}
%for a suitable $C>0$.
%\end{proposition}
%\begin{proof}
%We the same notations as above we get
%\begin{multline*}\sum_{j\in \Z^2} |f_j|
%\leq \sum_{|j|<N} |f_j| + \sum_{|j|\geq N} |f_j| 
%\\
%\leq (\sum_{|j|<N} |f_j|^2\langle j \rangle^2)^\frac 12 
%(\sum_{|j|<N} \frac 1{\langle j\rangle^2})^\frac 12 + 
%\sup_j \hbox{ } \langle j \rangle^s |f_j| \sum_{|j|\geq N} \langle j \rangle^{-s}
%\\\leq C \sqrt {\ln N} \|f\|_{H^1} + C \|f\|_{X_s} N^{-s+2}
%\end{multline*}
%where $N$ is a truncation parameter that we can choose as follows:
%$$N\sim (2+\|f\|_{X_s})^{\frac 1{s-2}}$$

%\end{proof}

%\begin{lemma} \label{almost_algebra}
 %   Let $N$ be dyadic number, and let $f 
  %  \in H^{1+}$. Then 
   % \begin{equation}\label{eq:H1+algebra}
    %    \|  f \|_{\mathcal A} \les \| f\|_{H^{1+}}
    %\end{equation}
%\end{lemma}

%\begin{proof}
 %   By H\"older, we have that 
  %  \begin{align*}
   %     \| f \|_{\mathcal A} &= \sum_{n \in \Z^2} |\ft f(n)| \\
    %    &\les \Big(\sum_{n \sim N} |\jb{n}^{2+}\ft f(n)|^2\Big)^\frac12  \Big(\sum_{n \sim N} \frac{1}{\jb{n}^{2+}}\Big)^\frac12\\
     %   &\| f\|_{H^{1+}}.
  %  \end{align*}
%\end{proof}

\begin{corollary}
    Let $\s > 2$, $R>1$ and $T > 0$
    be fixed. Then for every $u_0 \in X_\s$ with $\|u_0\|_{H^1}\leq R$, there exists a unique solution $u(t) \in C([0,T], X_\s)$ of \eqref{NLS}. Moreover there exists a function
    $ F(\|u_0\|_{X_\s})$ such that
    \begin{equation}\label{eq:XsGWP}
        \| v \|_{C^1([0,T], X_\s)} \le F(\|u_0\|_{X_\s}).
    \end{equation}
\end{corollary}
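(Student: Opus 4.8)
The plan is to run the local Cauchy theory directly in the algebra $X_\s$ and then to globalise it by importing the persistence of $H^{\s_0}$-regularity already established above, for an exponent $\s_0$ slightly larger than $1$. I would work throughout with the mild interaction formulation \eqref{NLS_interaction} for $v=e^{-it\Delta}u$, using two structural facts: $e^{\pm it\Delta}$ is an isometry on both $X_\s$ and $\mathcal A$ (being a Fourier multiplier of modulus one), and, for $\s>2$, Proposition~\ref{algebraprop} together with the embedding $X_\s\subset\mathcal A$ makes $X_\s$ a Banach algebra; iterating \eqref{eq:algebra}, using that $\mathcal A$ is an algebra and that $\|\bar f\|_{X_\s}=\|f\|_{X_\s}$, $\|\bar f\|_{\mathcal A}=\|f\|_{\mathcal A}$, one gets the multilinear bound
\begin{equation}\label{plan:multilin}
\| \, |u|^{2k}u\, \|_{X_\s}\les_k \|u\|_{X_\s}\|u\|_{\mathcal A}^{2k}.
\end{equation}
A standard contraction argument in a ball of $C([0,T_0],X_\s)$ for the map $v\mapsto u_0-i\int_0^t e^{-i\tau\Delta}\big(|e^{i\tau\Delta}v|^{2k}e^{i\tau\Delta}v\big)\,d\tau$ (whose range lies in $C([0,T_0],X_\s)$ because the Duhamel integral is Lipschitz in $X_\s$) then produces a unique local solution on an interval of length $T_0\ges(1+\|u_0\|_{X_\s})^{-2k}$, with the usual blow-up alternative. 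Since $\|u_0\|_{H^1}\le R$ and $X_\s\hookrightarrow H^1\cap L^\infty$, the corresponding $u=e^{it\Delta}v$ lies in $Y^{4k}_J$ on every compact subinterval $J$ of its existence interval, so by the uniqueness in Proposition~\ref{cauchyl} it agrees there with the global $H^1$-flow; it therefore remains only to bound $\|v(t)\|_{X_\s}$ on $[0,T]$, which, by the blow-up alternative, reduces to an a priori estimate.

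The a priori estimate is the crux, and is where $\s>2$ is used in an essential way. In \eqref{plan:multilin} a single factor carries the $X_\s$ norm, the other $2k$ sitting in the Wiener algebra $\mathcal A$; and $\mathcal A$ is \emph{not} controlled by the conserved $H^1$ norm on $\T^2$ — the failure is only logarithmic ($H^{1+}\subset\mathcal A$ but $H^1\not\subset\mathcal A$), yet it is genuine. The remedy is to fix $\s_0$ with $1<\s_0<\s-1$, which is possible precisely because $\s>2$. Then $u_0\in X_\s\hookrightarrow H^{\s_0}$ with $\|u_0\|_{H^{\s_0}}\les\|u_0\|_{X_\s}$, and combining the propagation-of-regularity bound \eqref{eq:H1+GWP} (applied with exponent $\s_0$) with $H^{\s_0}\subset\mathcal A$ gives
$$\sup_{t\in[0,T]}\|u(t)\|_{\mathcal A}\les \sup_{t\in[0,T]}\|u(t)\|_{H^{\s_0}}\le K(R,T)\|u_0\|_{H^{\s_0}}\les C(R,T)\|u_0\|_{X_\s}=:A.$$
Feeding this and \eqref{plan:multilin} into \eqref{NLS_interaction}, and using that $e^{\pm it\Delta}$ is an isometry on $X_\s$, on the maximal interval $[0,T^*)$ of $X_\s$-existence (with $T^*\le T$ without loss of generality) one obtains
$$\|v(t)\|_{X_\s}\le \|u_0\|_{X_\s}+C_kA^{2k}\int_0^t\|v(\tau)\|_{X_\s}\,d\tau,$$
and Gronwall gives $\|v(t)\|_{X_\s}\le\|u_0\|_{X_\s}\exp(C_kA^{2k}T)$. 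The right-hand side being finite, the blow-up alternative forces $T^*\ge T$, so $v\in C([0,T],X_\s)$ with $\|v\|_{L^\infty_{[0,T]}X_\s}\le\|u_0\|_{X_\s}\exp(C(R,T)\|u_0\|_{X_\s}^{2k})$.

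For the time regularity, \eqref{NLS_interaction} and the isometry property (applied inside the time integral) give, for $t,t'\in[0,T]$,
$$\|v(t)-v(t')\|_{X_\s}\le\Big|\int_{t'}^{t}\| \, |u(\tau)|^{2k}u(\tau)\, \|_{X_\s}\,d\tau\Big|\le C_kA^{2k}\|v\|_{L^\infty_{[0,T]}X_\s}\,|t-t'|,$$
so $v$ is Lipschitz in time with values in $X_\s$; and differentiating the Duhamel formula — legitimate because the integrand is bounded in $X_\s$ (indeed in $\mathcal A$) and continuous with values in $H^{\s_0}$, as $u\in C([0,T],H^{\s_0})$, $H^{\s_0}$ is an algebra and $e^{-i\tau\Delta}$ is strongly continuous on $H^{\s_0}$ — one finds $\partial_t v(t)=-ie^{-it\Delta}\big(|e^{it\Delta}v(t)|^{2k}e^{it\Delta}v(t)\big)$ with $\|\partial_t v(t)\|_{X_\s}=\| \, |u(t)|^{2k}u(t)\, \|_{X_\s}\le C_kA^{2k}\|v\|_{L^\infty_{[0,T]}X_\s}$ bounded uniformly in $t$. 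Collecting these bounds gives \eqref{eq:XsGWP} with $F(\|u_0\|_{X_\s})=(1+C_kA^{2k})\|u_0\|_{X_\s}\exp(C_kA^{2k}T)$ and $A=C(R,T)\|u_0\|_{X_\s}$; uniqueness in $C([0,T],X_\s)$ follows either from the uniqueness already known in $Y^{4k}$, or directly from a Gronwall estimate on the difference of two solutions using the multilinear form of \eqref{eq:algebra}.

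The main obstacle will be the globalisation step, and it is entirely located in the a priori estimate: the nonlinear bound \eqref{plan:multilin} forces the $2k$ lower factors into $\mathcal A$, which the $H^1$ conservation law on $\T^2$ just fails to control, and the way out — propagating $H^{\s_0}$ with $1<\s_0<\s-1$, which does embed in $\mathcal A$ — is available only because $\s>2$; once that input is in place the estimate closes by a plain linear Gronwall inequality. A secondary, purely technical point is that $e^{it\Delta}$ is not strongly continuous on $\FL^{\s,\infty}$, so the continuity and differentiability statements have to be read off from the Duhamel integral (which is Lipschitz in $X_\s$ because the propagator only ever occurs under the time integral) rather than from the linear flow acting on fixed data, and the time derivative $\partial_t v$ bounded above should accordingly be understood in the $W^{1,\infty}$-in-time sense.
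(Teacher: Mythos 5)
Your proposal is correct and follows essentially the same route as the paper: local well-posedness in $X_\s$ via the algebra property \eqref{eq:algebra} and $X_\s\subset\mathcal A$, an a priori bound on $\|u(t)\|_{\mathcal A}$ obtained by propagating $H^{\s'}$-regularity for some $1<\s'<\s-1$ through \eqref{eq:H1+GWP} (this is exactly where $\s>2$ enters in the paper as well), a linear Gronwall argument closing the $X_\s$ bound, and the $C^1$ estimate for $v$ read off from the mild formulation \eqref{NLS_interaction}. Your additional remarks on the lack of strong continuity of $e^{it\Delta}$ on $\FL^{\s,\infty}$ and on justifying the time differentiation are careful refinements of details the paper passes over, not a different argument.
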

\begin{proof} As a consequence of Proposition \ref{algebraprop} and the embedding $X_{\s} \subset \mathcal A$ we have that \eqref{NLS} is locally well posed in $X_\s$. Next we control 
$\|u\|_{L^\infty_{[0,T]}X_\s}$. Let $1< \s' < \s-1$ be fixed, then we have that $\|u_0\|_{\mathcal A} \les \|u_0\|_{H^{\s'}}$. 
    From 
    %\eqref{NLS_interaction}, 
    \eqref{eq:algebra} and \eqref{eq:H1+GWP}, we obtain that for some universal constant $C>0$,
    \begin{align*}
        \| u(t) \|_{X_\s} &\le \|u_0\|_{X_\s} + \int_0^t \| | u(\tau)|^{2k}  u(\tau) \|_{X_\s} d\tau \\
        &\le \|u_0\|_{X_\s} + C\int_0^t \|u(\tau)\|_{\mathcal{A}}^{2k} \|u(\tau)\|_{X_\s} d\tau \\
        &\le \|u_0\|_{X_\s} + C \int_0^t \|u(\tau)\|_{H^{\s'}}^{2k} \|u(\tau)\|_{X_\s}d\tau \\
        &\le \|u_0\|_{X_\s} + C K^{2k}\|u_0\|_{H^{\s'}}^{2k} \int_0^t \|u(\tau)\|_{X_\s} d\tau. 
    \end{align*}
    Therefore, we deduce from the Gronwall and the bound $\|u_0\|_{H^{\s'}} \les \|u_0\|_{X_\s}$ that the solution $u(t)$ can be extended up to time $T$ with the bound
    \begin{equation} \label{XsGWPC0}
        \| u \|_{C([0,T], X_\s)} \le F(\|u_0\|_{X_\s}).
    \end{equation} We now move to showing \eqref{eq:XsGWP}. By \eqref{NLS_interaction}, we have that 
    $$ 
    \partial_t v = - i e^{-it \Delta}\big(|e^{it \Delta } v(t)|^{2k} e^{it\Delta } v(t)\big) = -i e^{-it \Delta} \big(|u|^{2k}u).
    $$
    Therefore, \eqref{eq:XsGWP} follows from \eqref{XsGWPC0}, the embedding $X_\s \supset \mathcal{A}$ and \eqref{eq:algebra}.
\end{proof}

\begin{remark}
With the same proofs, the results above can be extended with uniform bounds 
to the solutions of the equation of the finite dimensional equations
$$i\partial_t u +\Delta u=P_{\le N} (P_{\le N}u|P_{\le N} u|^{2k}).$$
    
\end{remark}

\section{Modified energy and properties of the measure}\label{modifiedenergynew}
%We introduce $D=\sqrt{-\Delta}$. We also denote %$(f,g)=\int_{\T^2} f \bar g dx$

\subsection{Modified energy}
\begin{proposition}\label{modifiedenergy}
Given $\sigma\in \R$ let
$$\mathcal C^\s(u)= D^\s (u|u|^{2k}) - (k+1) D^\s u u^k \bar u^k - k u^{k+1} \bar u^{k-1} D^\s \bar u.$$
We have the following identity for every $s\geq 2$ and for $k\geq 1$ integer:
\begin{equation}
\begin{aligned}\label{energyestimate}
&\frac d{dt} \Big (\|D^s u\|_{L^2}^2 -
k \Re (D^{s-2} \bar u u^{k+1} \bar u ^{k-1}, D^s u)\Big)
\\&= (\mathcal C^s (u), D^s u) 
-2k (k-1)
\Im (D^{s-2} \bar u u^{k+1}  \Delta \bar u \bar u^{k-2}, D^s u)
\\&\phantom{=\ }- 2k(k+1) 
\Im (D^{s-2} \nabla \bar u \nabla u u^{k} \bar u^{k-1}, D^{s} u)
\\&\phantom{=\ }-  2k(k-1) 
\Im (D^{s-2} \nabla \bar u u^{k+1} \nabla \bar u \bar u^{k-2}, D^{s} u)
\\&\phantom{=\ }- 2 k (k+1)(k-1)\Im (D^{s-2} \bar u \nabla  u\nabla \bar u  u^{k} \bar u^{k-2}, D^{s} u)
- k {\mathcal N}(u),
\end{aligned}
\end{equation}
where
\begin{equation} \label{energy_nonlinearity}
    \begin{aligned}
    &{\mathcal N}(u)\\
&= 2 \Im (\nabla C^{s-2}(\bar u) |u|^{2k-2}u^2, \nabla D^{s-2} u)\\
&\phantom{=\ }+2k\Im (\nabla D^{s-2}\bar u \bar u^ku^{k} |u|^{2k-2}u^2, \nabla D^{s-2} u)\\
&\phantom{=\ }+(2k+2)\Im (D^{s-2}\bar u \nabla (|u|^{2k})|u|^{2k-2}u^2, \nabla D^{s-2} u)
\\&\phantom{=\ }+k
\Im (D^{s-2}u \nabla (|u|^{2k-2} \bar u^{2}) |u|^{2k-2}u^2, \nabla D^{s-2} u)
\\&\phantom{=\ }+
\Im (D^{s-2}(\bar u|u|^{2k} ) \nabla (|u|^{2k-2}u^2), \nabla D^{s-2} u)
\\&\phantom{=\ }-2\Im (D^{s-2} \bar u \nabla(|u|^{4k-2} u^2), \nabla D^{s-2}u)
\\&\phantom{=\ } +\Im (D^{s-2}\bar u \nabla (|u|^{2k-2} u^2), \nabla D^{s-2} (u|u|^{2k}))\\
&\phantom{=\ } +k\Im (\nabla D^{s-2}\bar u |u|^{2k-2} u^2, D^{s-2} \bar u \nabla(u^{k+1}\bar u^{k-1})).
\end{aligned}
\end{equation}
\end{proposition}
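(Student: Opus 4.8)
The plan is to compute the time derivative of each of the two quantities $\|D^s u\|_{L^2}^2$ and $k \Re(D^{s-2}\bar u u^{k+1}\bar u^{k-1}, D^s u)$ directly along the flow \eqref{NLS}, and then organize the resulting terms. First I would use the equation $\partial_t u = i\Delta u - i|u|^{2k}u$ (and its conjugate) to write
\begin{align*}
\frac{d}{dt}\|D^s u\|_{L^2}^2 &= 2\Re(D^s \partial_t u, D^s u) = 2\Re\big(i D^s \Delta u - i D^s(|u|^{2k}u), D^s u\big).
\end{align*}
The term with $\Delta u$ vanishes since $\Re(i D^s \Delta u, D^s u) = \Re(i \Delta D^s u, D^s u) = -\Im\|\nabla D^s u\|_{L^2}^2 = 0$ by integration by parts. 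This leaves $\frac{d}{dt}\|D^s u\|_{L^2}^2 = 2\Re(-i D^s(|u|^{2k}u), D^s u) = 2\Im(D^s(|u|^{2k}u), D^s u)$. The key algebraic step is then to write $D^s(u|u|^{2k}) = \mathcal C^s(u) + (k+1) D^s u\, u^k\bar u^k + k u^{k+1}\bar u^{k-1}D^s\bar u$ using the definition of $\mathcal C^s$: the first piece gives $2\Im(\mathcal C^s(u), D^s u)$, which one rewrites (moving one factor $D^s$) in a form producing $(\mathcal C^s(u), D^s u)$ up to lower-order corrections; the second piece $2(k+1)\Im(|u|^{2k} D^s u, D^s u)$ vanishes because $|u|^{2k}$ is real (the integrand is $|u|^{2k}|D^s u|^2$, real); and the third piece $2k\Im(u^{k+1}\bar u^{k-1}D^s\bar u, D^s u)$ is the genuinely troublesome term with $2s$ derivatives, both landing (in total) on factors of $\bar u$.

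The heart of the argument is the treatment of this last term, $2k\Im(u^{k+1}\bar u^{k-1}D^s\bar u, D^s u)$. The strategy, following \cite{PTV_first}, is to integrate by parts twice in order to redistribute derivatives: write $D^s\bar u\, D^s u = D^{s-1}\bar u\, D^{s+1}u + \text{(commutator terms)}$ is the wrong direction, so instead one uses $D^{s-2}$ on one factor. Concretely, since $D^s = D^2 D^{s-2} = -\Delta D^{s-2}$, one writes the term as an integral involving $\Delta(D^{s-2}\bar u)$ against $D^s u$ times the coefficient $u^{k+1}\bar u^{k-1}$, and integrates by parts the two derivatives in $\Delta$ off of $D^{s-2}\bar u$ and onto the product $u^{k+1}\bar u^{k-1} D^s u$. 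Expanding $\Delta(u^{k+1}\bar u^{k-1}D^s u)$ by the Leibniz rule produces: (i) a term $u^{k+1}\bar u^{k-1}\Delta D^s u = -u^{k+1}\bar u^{k-1}D^{s+2}u$ — but this recombines with the correction term $k\Re(D^{s-2}\bar u u^{k+1}\bar u^{k-1}, D^s u)$ whose time derivative we are simultaneously computing, and this is precisely why that correction term was added; (ii) cross terms $\nabla(u^{k+1}\bar u^{k-1})\cdot \nabla D^s u$ and $\Delta(u^{k+1}\bar u^{k-1}) D^s u$, which after further manipulation yield the four explicit $\Im(D^{s-2}\cdots, D^s u)$ bilinear terms on the RHS of \eqref{energyestimate} (the ones with $\Delta\bar u\bar u^{k-2}$, with $\nabla\bar u\nabla u$, with $\nabla\bar u\nabla\bar u$, and with $\nabla u\nabla\bar u$, whose coefficients $-2k(k-1)$, $-2k(k+1)$, $-2k(k-1)$, $-2k(k+1)(k-1)$ come from counting how many ways a $\nabla$ or $\Delta$ can fall on each of the $k+1$ factors of $u$ and $k-1$ factors of $\bar u$, via $\nabla(u^{k+1}) = (k+1)u^k\nabla u$ etc.). The remaining lower-order contributions — coming from the commutator $[\Delta, D^{s-2}]$ acting across the product, and from differentiating the coefficient $u^{k+1}\bar u^{k-1}$ when computing $\frac{d}{dt}$ of the correction term, together with the $D^s$ vs.\ $D^{s-2}(u|u|^{2k})$ discrepancy in the $\mathcal C^s$ term — are collected into $-k\mathcal N(u)$; I would verify the eight-line formula \eqref{energy_nonlinearity} by tracking each such contribution, noting that every term in $\mathcal N(u)$ has at most $2(s-2)+1 = 2s-3$ total derivatives distributed so that at most $s-2$ land on a single factor plus one gradient, which is the "well-distributed" structure referred to after \eqref{energyestimate}.

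Finally, since every identity used — the equation, integration by parts on $\T^2$, and the Leibniz rule — is exact, the proof is a bookkeeping computation and no inequalities are needed; one only needs $s\geq 2$ so that $D^{s-2}$ is a nonnegative-order operator and all the Fourier series manipulations are justified (e.g.\ for $u$ smooth, which suffices by the density/approximation used elsewhere in the paper). I expect the main obstacle to be purely organizational: correctly matching the combinatorial coefficients when the Leibniz rule is applied to $u^{k+1}\bar u^{k-1}$ and to $|u|^{2k-2}u^2$-type products, and making sure that the "integration by parts to move $\Delta$" step is paired correctly with $\frac{d}{dt}$ of the correction term so that the dangerous $D^{s+2}u$ contribution cancels exactly. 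Once that cancellation is arranged, reading off \eqref{energyestimate} and \eqref{energy_nonlinearity} is mechanical.
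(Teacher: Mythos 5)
Your proposal follows essentially the same route as the paper's proof: differentiate the modified energy along \eqref{NLS}, split $D^s(u|u|^{2k})$ using $\mathcal C^s$, drop the real term $2(k+1)\Im\int |u|^{2k}|D^s u|^2=0$, and let the dangerous term $2k\Im\big(u^{k+1}\bar u^{k-1}D^s\bar u, D^s u\big)$ cancel against the linear ($\partial_t u\mapsto i\Delta u$) part of the time derivative of the correction term after moving a Laplacian by parts, with the Leibniz cross terms producing the explicit bilinear terms of \eqref{energyestimate} and the nonlinear substitution $\partial_t u\mapsto -i|u|^{2k}u$ in the correction producing $\mathcal N(u)$; whether you integrate by parts inside the problematic term (your version) or inside the derivative of the correction (the paper's $\mathcal L(u)$) is the same algebra. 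Two small inaccuracies in your bookkeeping: there is no contribution from a commutator $[\Delta, D^{s-2}]$, since both are Fourier multipliers and commute --- all extra terms come from the Leibniz expansion of $\Delta$ (or $\partial_t$) on products --- and in the paper $\mathcal N(u)$ consists \emph{exactly} of the nonlinear-substitution terms, while differentiating the coefficient $u^{k+1}\bar u^{k-1}$ under the linear substitution yields the explicit $\Delta\bar u$ and $\nabla u\,\nabla\bar u$ terms of \eqref{energyestimate}, not part of $\mathcal N$. With that sorting corrected, what remains is precisely the computation you defer: the integrations by parts and the two vanishing pairings of the form $\Im\big(\nabla D^{s-2}u\,(\cdots),\nabla D^{s-2}u\big)=0$ and $\Im\big(\nabla D^{s-2}\bar u\,(\cdots),\nabla D^{s-2}\bar u\,(\cdots)\big)=0$ that the paper uses to arrive at the eight-term formula \eqref{energy_nonlinearity}, together with a careful check of the signs and coefficients (which constitutes most of the paper's actual proof).
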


% \begin{remark}
% In the case $k=1$, namely cubic NLS, all the terms with coefficient $k-1$ disappear and we get the estimate done in Edinburgh. 
% More precisely in the cubic case we get on the r.h.s. of the energy estimate one single term, beside $(\mathcal C u, D^s u)$ and $\mathcal N(u)$. Concerning the case $k>1$ it seems that the most dangerous new term is    
% $$\Im (D^{s-2} \bar u u^{k+1}  \Delta \bar u \bar u^{k-2}, D^s u)$$
% where however the position of the conjugations seems to be favorable.
% Notice that for $k>1$ we also introduce the new term 
% $$\Im (D^{s-2} \bar u \nabla  u\nabla \bar u  u^{k} \bar u^{k-2}, D^{s} u)$$
% which seems to be not dangerous since has a good repartition of derivatives on 4 terms, and also we get the extra term
% $$\Im (D^{s-2} \nabla \bar u u^{k+1} \nabla \bar u \bar u^{k-2}, D^{s} u)$$
% which looks like (at the level of distribution of derivatives, but I guess better at the level of position of conjugation)
% the term
% $$\Im (D^{s-2} \nabla \bar u \nabla u u^{k} \bar u^{k-1}, D^{s} u)$$
% which appears also in the cubic case $k=1$, and that we have already treated
% in Edinburgh.
% \end{remark}
\begin{proof}
We compute
\begin{align*}
&\frac d{dt} \|D^s u\|_{L^2}^2=
2\Re (D^s \partial_t u, D^s u)
\\&=2\Re (-i D^s (u|u|^{2k}), D^s u)=2\Im (D^s (u|u|^{2k}), D^s u)
\\&=2 (k+1) \Im (D^s u |u|^{2k}, D^s u) + 
2 k \Im (D^s \bar u u^{k+1}\bar u^{k-1}, D^s u)+ 
(\mathcal C^s u, D^s u)\\&
= 2 k \Im (D^s \bar u u^{k+1}\bar u^{k-1}, D^s u)
+(\mathcal C^s u, D^s u).
\end{align*}
Next we introduce the following correction 
of the energy, in order to cancel the term 
$2 k \Im (D^s \bar u u^{k+1}\bar u^{k-1}, D^s u)$ on the r.h.s.:
\begin{align*}
&\frac d{dt} \Re (D^{s-2} \bar u u^{k+1} \bar u ^{k-1}, D^s u)\\&=
\Re (D^{s-2} \partial_t \bar u u^{k+1} \bar u ^{k-1}, D^s u)
+(k+1) \Re (D^{s-2} \bar u \partial_t u u^{k} \bar u ^{k-1}, D^s u)
\\&\phantom{=\ }
+(k-1) \Re (D^{s-2} \bar u u^{k+1} \partial_t \bar u \bar u^{k-2}, D^s u)
+\Re (D^{s-2} \bar u u^{k+1} \bar u ^{k-1}, D^s \partial_t u)
\\&={\mathcal L}(u) + \mathcal N(u)
\end{align*}
where ${\mathcal L}(u)$ is obtained by replacing $\partial_t u$ by $i\Delta u$
and ${\mathcal N}(u)$ is obtained by replacing $\partial_t u$ by $-iu|u|^{2k}$:
\begin{align*}
{\mathcal L}(u)&=\Im (D^{s} \bar u u^{k+1} \bar u ^{k-1}, D^s u)
- (k+1)\Im (D^{s-2} \bar u D^2 u u^{k} \bar u ^{k-1}, D^s u)
\\& \phantom{=\ }+ (k-1)
\Im (D^{s-2} \bar u u^{k+1}  D^2\bar u \bar u^{k-2}, D^s u)
+\Im (D^{s-2} \bar u u^{k+1} \bar u ^{k-1}, D^{s+2} u)\\
&=
\Im (D^{s} \bar u u^{k+1} \bar u ^{k-1}, D^s u)
- (k+1)\Im (D^{s-2} \bar u D^2 u u^{k} \bar u ^{k-1}, D^s u)
\\&\phantom{=\ }+ (k-1)
\Im (D^{s-2} \bar u u^{k+1}  D^2\bar u \bar u^{k-2}, D^s u)
+\Im (\Delta(D^{s-2} \bar u u^{k+1} \bar u ^{k-1}), D^{s} u)\\
&=\Im (D^{s} \bar u u^{k+1} \bar u ^{k-1}, D^s u)
- (k+1)\Im (D^{s-2} \bar u D^2 u u^{k} \bar u ^{k-1}, D^s u)
\\&\phantom{=\ }+ (k-1)
\Im (D^{s-2} \bar u u^{k+1}  D^2\bar u \bar u^{k-2}, D^s u)
%+\Im (\Delta(D^{s-2} \bar u u^{k+1} \bar u^{k-1}), D^{s} u)
\\&\phantom{=\ }+\Im (D^{s} \bar u u^{k+1} \bar u^{k-1}, D^{s} u)
+(k+1) \Im (D^{s-2} \bar u \Delta u u^{k} \bar u^{k-1}, D^{s} u)
\\&\phantom{=\ }+(k-1)\Im (D^{s-2} \bar u u^{k+1} \Delta \bar u \bar u^{k-2}, D^{s} u)
\\&\phantom{=\ }+ 2(k+1) 
\Im (D^{s-2} \nabla \bar u \nabla u u^{k} \bar u^{k-1}, D^{s} u)
+  2(k-1) 
\Im (D^{s-2} \nabla \bar u u^{k+1} \nabla \bar u \bar u^{k-2}, D^{s} u)
\\&\phantom{=\ }+ 2(k+1)(k-1)\Im (D^{s-2} \bar u \nabla  u\nabla \bar u  u^{k} \bar u^{k-2}, D^{s} u)\\
&= 2 \Im (D^{s} \bar u u^{k+1} \bar u ^{k-1}, D^s u)
\\&\phantom{=\ }+ 2(k-1)
\Im (D^{s-2} \bar u u^{k+1}  \Delta\bar u \bar u^{k-2}, D^s u)
%+\Im (\Delta(D^{s-2} \bar u u^{k+1} \bar u^{k-1}), D^{s} u)
\\&\phantom{=\ }+ 2(k+1) 
\Im (D^{s-2} \nabla \bar u \nabla u u^{k} \bar u^{k-1}, D^{s} u)
+  2(k-1) 
\Im (D^{s-2} \nabla \bar u u^{k+1} \nabla \bar u \bar u^{k-2}, D^{s} u)
\\&\phantom{=\ }+ 2(k+1)(k-1)\Im (D^{s-2} \bar u \nabla  u\nabla \bar u  u^{k} \bar u^{k-2}, D^{s} u)
\end{align*}
and
\begin{align*}
{\mathcal N}(u)
&=-\Im (D^{s-2} (\bar u |u|^{2k})u^{k+1} \bar u ^{k-1}, D^s u)
+(k+1) \Im (D^{s-2} \bar u u|u|^{2k} u^{k} \bar u ^{k-1}, D^s u)
\\
&\phantom{=\ }-(k-1) \Im (D^{s-2} \bar u u^{k+1} \bar u |u|^{2k}\bar u^{k-2}, D^s u)
-\Im (D^{s-2} \bar u u^{k+1} \bar u ^{k-1}, D^s (u|u|^{2k})) 
\\
&= -\Im (D^{s-2} (\bar u |u|^{2k})|u|^{2k-2}u^2, D^s u) + 2 \Im (D^{s-2} \cj u |u|^{4k-2}u^2, D^s u) \\
&\phantom{=\ }-\Im (D^{s-2}\cj u |u|^{2k-2}u^2, D^s (u|u|^{2k}))
\end{align*}
We develop the terms on the r.h.s.
\begin{align*}
&-\Im (D^{s-2}(\bar u|u|^{2k} )|u|^{2k-2}u^2, D^s u)\\
&=\Im (\nabla (D^{s-2}(\bar u|u|^{2k} )|u|^{2k-2}u^2, \nabla D^{s-2} u)+
\Im (D^{s-2}(\bar u|u|^{2k} ) \nabla (|u|^{2k-2}u^2), \nabla D^{s-2} u)\\
&= \Im (\nabla C^{s-2}(\bar u) |u|^{2k-2}u^2, \nabla D^{s-2} u)
+(k+1)\Im (\nabla (D^{s-2}\bar u \bar u^ku^{k} )|u|^{2k-2}u^2, \nabla D^{s-2} u)\\
&\phantom{=\ }+k
\Im (\nabla (D^{s-2}u u^{k-1} \bar u ^{k+1}) |u|^{2k-2}u^2, \nabla D^{s-2} u)
\\&\phantom{=\ }+
\Im (D^{s-2}(\bar u|u|^{2k} ) \nabla (|u|^{2k-2}u^2), \nabla D^{s-2} u)
\\
&= \Im (\nabla C^{s-2}(\bar u) |u|^{2k-2}u^2, \nabla D^{s-2} u)\\
&\phantom{=\ }+(k+1)\Im (\nabla D^{s-2}\bar u \bar u^ku^{k} |u|^{2k-2}u^2, \nabla D^{s-2} u)\\
&\phantom{=\ }+(k+1)\Im (D^{s-2}\bar u \nabla (|u|^{2k})|u|^{2k-2}u^2, \nabla D^{s-2} u)
\\&\phantom{=\ }+k
\Im (D^{s-2}u \nabla (|u|^{2k-2} \bar u^{2}) |u|^{2k-2}u^2, \nabla D^{s-2} u)
\\&\phantom{=\ }+
\Im (D^{s-2}(\bar u|u|^{2k} ) \nabla (|u|^{2k-2}u^2), \nabla D^{s-2} u)
\end{align*}
where used that
$$\Im (\nabla D^{s-2}u u^{k-1} \bar u ^{k+1} |u|^{2k-2}u^2, \nabla D^{s-2} u)=0.$$
Next we have 
\begin{align*}
&2\Im (D^{s-2} \bar u |u|^{4k-2} u^2, D^su)\\
&=-2\Im (\nabla (D^{s-2} \bar u |u|^{4k-2} u^2), \nabla D^{s-2}u)
\\&
=-2\Im (\nabla D^{s-2} \bar u |u|^{4k-2} u^2, \nabla D^{s-2}u)
-2\Im (D^{s-2} \bar u \nabla(|u|^{4k-2} u^2), \nabla D^{s-2}u)
\end{align*}
and we also get
\begin{align*}
&-\Im (D^{s-2}\bar u |u|^{2k-2} u^2, D^s (u|u|^{2k}))\\
&=\Im (D^{s-2}\bar u \nabla (|u|^{2k-2} u^2), \nabla D^{s-2} (u|u|^{2k}))+\Im (\nabla D^{s-2}\bar u |u|^{2k-2} u^2, \nabla D^{s-2} (u|u|^{2k}))
\\
&=
\Im (D^{s-2}\bar u \nabla (|u|^{2k-2} u^2), \nabla D^{s-2} (u|u|^{2k}))
+\Im (\nabla D^{s-2}\bar u |u|^{2k-2} u^2, \nabla C^{s-2} (u))\\
&\phantom{=\ }+(k+1)\Im (\nabla D^{s-2}\bar u |u|^{2k-2} u^2, \nabla (D^{s-2} u u^{k}\bar u^{k}))\\
&\phantom{=\ }+k\Im (\nabla D^{s-2}\bar u |u|^{2k-2} u^2, \nabla (D^{s-2} \bar u u^{k+1}\bar u^{k-1}))
\\&=
\Im (D^{s-2}\bar u \nabla (|u|^{2k-2} u^2), \nabla D^{s-2} (u|u|^{2k}))\\
&\phantom{=\ }+\Im (\nabla D^{s-2}\bar u |u|^{2k-2} u^2, \nabla C^{s-2} (u))\\
&\phantom{=\ }+(k+1)\Im (\nabla D^{s-2}\bar u |u|^{2k-2} u^2, \nabla D^{s-2} u u^{k}\bar u^{k})\\
&\phantom{=\ }+(k+1)\Im (\nabla D^{s-2}\bar u |u|^{2k-2} u^2, D^{s-2} u \nabla (u^{k}\bar u^{k}))\\
&\phantom{=\ } +k\Im (\nabla D^{s-2}\bar u |u|^{2k-2} u^2, D^{s-2} \bar u \nabla(u^{k+1}\bar u^{k-1}))
\end{align*}
where we used that
$$\Im (\nabla D^{s-2}\bar u |u|^{2k-2} u^2, \nabla D^{s-2} \bar u (u^{k+1}\bar u^{k-1}))=0.$$
%$$\Im (D^{s-2}\bar u \nabla (|u|^{2k-2} u^2), D^{s-2} \bar u \nabla (u (u^{k+1}\bar u^{k-1}))=0.$$
Therefore we have
\begin{align*}
&{\mathcal N}(u)\\
&= 2 \Im (\nabla C^{s-2}(\bar u) |u|^{2k-2}u^2, \nabla D^{s-2} u)\\
&\phantom{=\ }+2k\Im (\nabla D^{s-2}\bar u \bar u^ku^{k} |u|^{2k-2}u^2, \nabla D^{s-2} u)\\
&\phantom{=\ }+(2k+2)\Im (D^{s-2}\bar u \nabla (|u|^{2k})|u|^{2k-2}u^2, \nabla D^{s-2} u)
\\&\phantom{=\ }+k
\Im (D^{s-2}u \nabla (|u|^{2k-2} \bar u^{2}) |u|^{2k-2}u^2, \nabla D^{s-2} u)
\\&\phantom{=\ }+
\Im (D^{s-2}(\bar u|u|^{2k} ) \nabla (|u|^{2k-2}u^2), \nabla D^{s-2} u)
\\&\phantom{=\ }-2\Im (D^{s-2} \bar u \nabla(|u|^{4k-2} u^2), \nabla D^{s-2}u)
\\&\phantom{=\ } +\Im (D^{s-2}\bar u \nabla (|u|^{2k-2} u^2), \nabla D^{s-2} (u|u|^{2k}))\\
&\phantom{=\ } +k\Im (\nabla D^{s-2}\bar u |u|^{2k-2} u^2, D^{s-2} \bar u \nabla(u^{k+1}\bar u^{k-1})).
\end{align*}
\end{proof}

\subsection{Construction of the measure}

We define 
\begin{equation}\label{Sdef}
S(u)=-\frac k2\lim_{N\rightarrow \infty}
\Re (D^{s-2} P_{\le N} \bar u (P_{\le N} u)^{k+1} (P_{\le N}\bar u) ^{k-1}, D^s P_{\le N}u)
\end{equation}
whenever the limit exists and $0$ otherwise.
We now show that $S$ satisfies Assumption \ref{ass6},
with $P_n=P_{\le N}$.  
\begin{lemma}
We have
\begin{equation} \label{conj_variance}
\E \big|(D^s P_{N} \bar u D^{s-2}  P_{M} \bar u)\ft{\phantom X} (n)\big|^2 \les \min(M^{-2}, \1_{|n| \sim N} + \jb{n}^{-2}\1_{N \ll |n|} + N^{-2} \1_{N \gg |n|}), 
\end{equation}
where expectations are taken with respect to $\mu$.
In particular, for every $2 < \s < s$,
\begin{equation} \label{conj_exponential}
    \big(\E \| (D^s P_{N} \bar u D^{s-2}  P_{M} \bar u) \|_{\mathcal FL^{-\s,1}}^p\big)^{\frac1p} \les p \min(M^{-1}, N^{-1}).
\end{equation}
\end{lemma}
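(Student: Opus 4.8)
The plan is to expand everything on the Fourier side. For $u$ distributed according to $\mu$ write $\hat u(n)=g_n\jb n^{-s}$, and set $h:=D^sP_N\bar u\,D^{s-2}P_M\bar u$. The two factors have Fourier coefficients $\tfrac{|n|^s}{\jb n^s}\1_{|n|\sim N}\,\overline{g_{-n}}$ and $\tfrac{|n|^{s-2}}{\jb n^s}\1_{|n|\sim M}\,\overline{g_{-n}}$, and the deterministic weights satisfy $\tfrac{|m|^s}{\jb m^s}\lesssim 1$ and, since $s>2$, $\tfrac{|m|^{s-2}}{\jb m^s}\lesssim\jb m^{-2}$, the latter being moreover $\lesssim M^{-2}$ on the support $|m|\sim M$ (when $N=1$ or $M=1$ the factor $|m|^{s-2}$ annihilates the zero mode). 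Hence
\[
\hat h(n)=\sum_{n_1+n_2=n}\tfrac{|n_1|^s}{\jb{n_1}^s}\1_{|n_1|\sim N}\,\tfrac{|n_2|^{s-2}}{\jb{n_2}^s}\1_{|n_2|\sim M}\,\overline{g_{-n_1}}\,\overline{g_{-n_2}}.
\]
The key structural point I would use is that \emph{both} factors carry a $\bar u$, so each summand is a product of two conjugated Gaussians; therefore $\E[\hat h(n)]=0$ (no Wick renormalisation is needed), and by Wick's theorem (Isserlis' formula) for complex Gaussians
\[
\E|\hat h(n)|^2\lesssim\sum_{n_1+n_2=n}\Big(\tfrac{|n_1|^s}{\jb{n_1}^s}\Big)^2\1_{|n_1|\sim N}\,\Big(\tfrac{|n_2|^{s-2}}{\jb{n_2}^s}\Big)^2\1_{|n_2|\sim M}.
\]

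To prove \eqref{conj_variance} I would bound this last sum in two complementary ways. First, using the second weight $\lesssim M^{-2}$ on its support, the first weight $\lesssim 1$, and $\#\{n_1:|n-n_1|\sim M\}\lesssim M^2$, the sum is $\lesssim M^{-4}\cdot M^2=M^{-2}$. Second, using instead the second weight $\lesssim\jb{n-n_1}^{-2}$ and the first $\lesssim 1$, the sum is $\lesssim\sum_{n_1:\,|n_1|\sim N}\jb{n-n_1}^{-4}$; in $\R^2$ this is always $\lesssim\sum_m\jb m^{-4}\lesssim 1$, it is $\lesssim N^2\jb n^{-4}\lesssim\jb n^{-2}$ when $|n|\gg N$ (since then $|n-n_1|\gtrsim|n|$ on the annulus), and $\lesssim N^2\cdot N^{-4}=N^{-2}$ when $|n|\ll N$ (since then $|n-n_1|\gtrsim N$). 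Taking the minimum of the two bounds is exactly \eqref{conj_variance}.

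For \eqref{conj_exponential}, each $\hat h(n)$ is a mean-zero degree-two polynomial in the Gaussians, hence an element of the second Wiener chaos, so Gaussian hypercontractivity gives $\|\hat h(n)\|_{L^p(\mu)}\lesssim p\,(\E|\hat h(n)|^2)^{1/2}$ for $p\ge 2$ (and the range $1<p<2$ follows by monotonicity of $L^p$-norms). Minkowski's inequality in $L^p(\mu)$, applied to the $\ell^1$ sum defining $\|\cdot\|_{\FL^{-\s,1}}$, then gives
\[
\big(\E\|h\|_{\FL^{-\s,1}}^p\big)^{1/p}\le\sum_n\jb n^{-\s}\,\|\hat h(n)\|_{L^p(\mu)}\lesssim p\sum_n\jb n^{-\s}\big(\E|\hat h(n)|^2\big)^{1/2}.
\]
By \eqref{conj_variance} one has $(\E|\hat h(n)|^2)^{1/2}\lesssim\min\big(M^{-1},\,\1_{|n|\sim N}+\jb n^{-1}\1_{N\ll|n|}+N^{-1}\1_{N\gg|n|}\big)$. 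If $M\le N$ I would bound this crudely by $M^{-1}$ and use $\sum_n\jb n^{-\s}\lesssim 1$ (valid since $\s>2$), obtaining $\lesssim M^{-1}$. If $N<M$ I would instead bound it by $\min(N^{-1},\jb n^{-1})$ and split the sum at $|n|\sim N$: the region $|n|\lesssim N$ contributes $\lesssim N^{-1}\sum_{|n|\lesssim N}\jb n^{-\s}\lesssim N^{-1}$, and $|n|\gg N$ contributes $\lesssim\sum_{|n|\gg N}\jb n^{-\s-1}\lesssim N^{1-\s}\lesssim N^{-1}$, again using $\s>2$. In either case the sum is $\lesssim\min(M^{-1},N^{-1})$, which is \eqref{conj_exponential}.

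The argument is essentially a routine second-chaos computation; the two places that need a little attention are the bookkeeping in \eqref{conj_variance} — recognising which of the two bounds dominates in each regime of $|n|$ relative to $N$ and $M$ — and, in \eqref{conj_exponential}, the fact that one must allow $\s$ arbitrarily close to $2$, so that the final $\ell^1$-summation converges only if one genuinely exploits the $\min$ with $N^{-1}$ (or $M^{-1}$) rather than summing the raw annulus bound $\1_{|n|\sim N}$.
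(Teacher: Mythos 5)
Your treatment of \eqref{conj_variance} is correct and is essentially the paper's own argument: expand $\ft h(n)$, $h:=D^sP_N\bar u\,D^{s-2}P_M\bar u$, as a convolution of the two Gaussian pieces, observe that only diagonal pairings contribute to the variance, and bound the resulting deterministic sum $\sum_{l+k=n}\jb{k}^{-4}\1_{|l|\sim N,|k|\sim M}$ once by summing in $k$ (giving $M^{-2}$) and once by summing in $l$ (giving the three-regime bound in $|n|$ versus $N$). The chaos estimate $\|\ft h(n)\|_{L^p(\mu)}\les p\,(\E|\ft h(n)|^2)^{1/2}$ and the Minkowski step for \eqref{conj_exponential} also coincide with the paper's proof.

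The final summation in your proof of \eqref{conj_exponential}, however, does not yield the stated bound: your two cases produce $\max(M^{-1},N^{-1})$, not $\min(M^{-1},N^{-1})$. When $M\le N$ you bound pointwise by $M^{-1}$ and obtain $\les M^{-1}$, but the target is $N^{-1}\le M^{-1}$; when $M>N$ you obtain $\les N^{-1}$, but the target is $M^{-1}\le N^{-1}$, so the concluding sentence ``in either case the sum is $\les\min(M^{-1},N^{-1})$'' is not what you derived. The cases are in effect used backwards: the crude estimate $M^{-1}\sum_n\jb{n}^{-\s}\les M^{-1}$ is exactly what is needed when $M\ge N$, whereas the delicate regime is $M\ll N$. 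There the mass of $\ft h$ sits on the annulus $|n|\sim N$, where \eqref{conj_variance} only gives the pointwise bound $\min(M^{-1},1)$, so this region contributes $M^{-1}N^{2-\s}$ to the sum; since $\E|\ft h(n)|^2\sim M^{-2}$ on that annulus with no further cancellation, this is sharp, and for $2<\s<3$ the stated inequality actually fails (take $M=1$, $p=2$: the left-hand side is $\gtrsim N^{2-\s}\gg N^{-1}$), so no repair of the bookkeeping can produce $\min(M^{-1},N^{-1})$ in that range of $\s$. What your computation honestly gives is $\les p\,\min\big(M^{-1},\,N^{-1}+M^{-1}N^{2-\s}\big)$, i.e.\ the stated bound when $\s\ge 3$ and only $\min(M^{-1},N^{-(\s-2)})$ when $2<\s<3$. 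For comparison, the paper's proof makes the same leap at the line ``$\les p\min(M^{-1},N^{2-\s}+N^{1-\s},N^{-1})$'', and the weaker, correct bound is all that is used downstream: in Lemmas \ref{lem:Sconvergence} and \ref{lem:Sexploc} one only needs a decay of the form $N^{0-}$ which remains summable over the dyadic pieces. So the right fix is to record the weaker estimate (which your argument does prove) rather than assert the $\min$ at the end.
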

\begin{proof}
We have 
\begin{equation}
(D^s P_{N} \bar u D^{s-2}  P_{M} \bar u)
\ft{\phantom X} (n)=\sum_{\substack{l+k=n\\|l|\sim N, |k|\sim M}}|l|^s|k|^{s-2}\frac{\cj {g_l}}{\langle l \rangle^s} \frac{\cj {g_k}}{\langle k \rangle^s}.
\end{equation}
Hence we get 
\begin{equation}\E \big|(D^s P_{N} \bar u D^{s-2}  P_{M} \bar u)\ft{\phantom X} (n)\big|^2
\les \sum_{\substack{l+k=n\\|l|\sim N, |k|\sim M}}
\frac 1{{\jb k}^4}.
\end{equation}
Notice that summing over $k$ we get
\begin{equation*}
\E \big|(D^s P_{N} \bar u D^{s-2}  P_{M} \bar u)\ft{\phantom X} (n)\big|^2
\les M^{-2}.
\end{equation*}
On the other hand summing over $l$ we get
\begin{align*}
    \E \big|(D^s P_{N} \bar u D^{s-2}  P_{M} \bar u)\ft{\phantom X} (n)\big|^2
&\les \sum_{\substack{l+k=n\\|l|\sim N, |k|\sim M}}
\frac 1{{\jb k}^4} \\
& \les \sum_{|l| \sim N} \frac{1}{\jb{n-l}^4} \\
&\les \1_{N \ll |n|}\sum_{|l| \sim N} \frac{1}{\jb{n}^4} +  \1_{N \sim |n|}\sum_{|l| \sim N} \frac{1}{\jb{n-l}^4} + \1_{N \gg |n|}\sum_{|l| \sim N} \frac{1}{N^4} \\
&\les \1_{N \ll |n|} \frac{N^2}{\jb{n}^4} + \1_{N \sim |n|} + \1_{N \gg |n|} N^{-2},
\end{align*}
which concludes the proof of \eqref{conj_variance}. Since the expression in \eqref{conj_variance} is a polynomial of degree 2 in a Gaussian random variable, by standard Wiener Chaos estimates (see \cite[Theorem I.22]{Simon}), we deduce that 
\begin{equation}
    \Big(\E \big|(D^s P_{N} \bar u D^{s-2}  P_{M} \bar u)\ft{\phantom X} (n)\big|^p\Big)^\frac1p \les p \min(M^{-1}, \1_{|n| \sim N} + \jb{n}^{-1}\1_{N \ll |n|} + N^{-1} \1_{N \gg |n|}),
\end{equation}
where the implicit constant is independent of $p$. In particular, by Minkowski, 
\begin{align*}
    & \big(\E \| (D^s P_{N} \bar u D^{s-2}  P_{M} \bar u) \|_{\mathcal FL^{-\s,1}}^p\big)^{\frac1p} \\
    &= \Big(\E \Big|\sum_{n \in \Z^2} \jb{n}^{-\s} \big|(D^s P_{N} \bar u D^{s-2}  P_{M} \bar u)\ft{\phantom X} (n)\big|\Big|^p \Big)^{\frac1p} \\
    &\le \sum_{n \in \Z^2} \jb{n}^{-\s} \Big(\E \big|(D^s P_{N} \bar u D^{s-2}  P_{M} \bar u)\ft{\phantom X} (n)\big|^p\Big)^\frac1p \\
    &\les p \sum_{n \in \Z^2} \jb{n}^{-\s} \min(M^{-1}, \1_{|n| \sim N} + \jb{n}^{-1}\1_{N \ll |n|} + N^{-1} \1_{N \gg |n|}) \\
    &\les p \min(M^{-1}, N^{2-\s} + N^{1-\s}, N^{-1}) \\
    &\les p \min(M^{-1}, N^{-1}).
\end{align*}
\end{proof}

\begin{lemma} \label{lem:Sconvergence}
For every $2<\sigma<s$, the limit \eqref{Sdef} exists for $\mu$ a.e. $u\in {\mathcal F}L^{\s,\infty}$.    
\end{lemma}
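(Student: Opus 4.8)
The plan is to fix $2<\s<s$ and show that the sequence
$$ a_N:=\Re\,\big(D^{s-2}P_{\le N}\bar u\,(P_{\le N}u)^{k+1}(P_{\le N}\bar u)^{k-1},\ D^s P_{\le N}u\big),\qquad N\ \text{dyadic}, $$
whose limit defines $S(u)$ in \eqref{Sdef}, converges for $\mu$-a.e.\ $u$. Writing $u_N:=P_{\le N}u$ and using that $D^s$ is self-adjoint with real Fourier symbol (so that $\overline{D^s u_N}=D^s\bar u_N$), one rewrites $a_N=\Re\int_{\T^2}h_N\,g_N$ with $h_N:=(D^{s-2}\bar u_N)(D^s\bar u_N)$ and $g_N:=u_N^{k+1}\bar u_N^{k-1}$; by Parseval and H\"older in the Fourier weights,
$$ \Big|\int_{\T^2}h_N\,g_N\Big|\le\|h_N\|_{\mathcal FL^{-\s,1}}\,\|g_N\|_{X_\s}. $$
The crucial structural point — already emphasised in the Remarks — is that in $h_N$ the two factors carrying the most derivatives ($D^{s-2}\bar u$ and $D^s\bar u$) are \emph{both} copies of $\bar u$, which is precisely the configuration controlled by \eqref{conj_exponential}. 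I would then prove a quantitative Cauchy estimate $\|a_{2N}-a_N\|_{L^p(\mu)}\les N^{-\delta}$ for some $\delta=\delta(p)>0$ and all $p$ large, and conclude by Chebyshev and Borel--Cantelli along the dyadic scale.

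For the factor $h_N$, decompose $h_N=\sum_{M_1,M_2\le N}(D^{s-2}P_{M_1}\bar u)(D^s P_{M_2}\bar u)$ into Littlewood--Paley pieces and apply \eqref{conj_exponential} to each, with the $D^s$ factor at scale $M_2$ and the $D^{s-2}$ factor at scale $M_1$, obtaining $\|(D^{s-2}P_{M_1}\bar u)(D^s P_{M_2}\bar u)\|_{L^p(\mathcal FL^{-\s,1})}\les p\,\min(M_1^{-1},M_2^{-1})$. Since $\sum_{M_1,M_2\ \text{dyadic}}\min(M_1^{-1},M_2^{-1})=\sum_{j\ge0}(j+1)2^{-j}<\infty$, this gives $\sup_N\|h_N\|_{L^p(\mathcal FL^{-\s,1})}\les p$; and since $h_{2N}-h_N$ only involves the pairs with $\max(M_1,M_2)=2N$ — for which $\min(M_1^{-1},M_2^{-1})\le(2N)^{-1}$ — summing the $O(\log N)$ such pieces gives $\|h_{2N}-h_N\|_{L^p(\mathcal FL^{-\s,1})}\les p\,N^{-1}\log N$.

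For the factor $g_N$, note that $s>2$ gives $u\in\mathcal A$ $\mu$-a.s.\ with all moments finite (indeed $\E\|u\|_{\mathcal A}=c\sum_{n\in\Z^2}\jb n^{-s}<\infty$), while $\s<s$ gives $u\in X_\s$ $\mu$-a.s.\ with all moments finite. Iterating the algebra estimate of Proposition \ref{algebraprop} (in the form $\|\prod_i v_i\|_{X_\s}\les\sum_j\|v_j\|_{X_\s}\prod_{i\ne j}\|v_i\|_{\mathcal A}$, using that $\mathcal A$ is a Banach algebra) bounds $\|g_N\|_{X_\s}\les\|u\|_{X_\s}\|u\|_{\mathcal A}^{2k-1}$ uniformly in $N$. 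For the increment, write $u_{2N}=u_N+P_{2N}u$ in each of the $2k$ factors and expand: every resulting term other than $g_N$ carries a factor $P_{2N}u$ or $P_{2N}\bar u$, and the algebra estimate bounds each such term by a product of copies of $\|u\|_{X_\s}$ and $\|u\|_{\mathcal A}$ times $\|P_{2N}u\|_{\mathcal A}+\|P_{2N}u\|_{X_\s}$. Since the $L^p(\mu)$-norm of $\|P_{2N}u\|_{\mathcal A}$ is $\les N^{2-s}$ and the $L^p(\mu)$-norm of $\|P_{2N}u\|_{X_\s}$ is a negative power of $N$ (both by elementary Gaussian moment bounds, using $s>2$ and $\s<s$), one gets $\|g_{2N}-g_N\|_{L^p(X_\s)}\les N^{-\delta}$ for some $\delta=\delta(p)>0$.

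Finally, writing $a_{2N}-a_N=\Re\int(h_{2N}-h_N)g_{2N}+\Re\int h_N(g_{2N}-g_N)$ and using the pairing bound together with Cauchy--Schwarz in $\mu$, the previous two steps give $\|a_{2N}-a_N\|_{L^p(\mu)}\les N^{-\delta}$ (implied constant depending on $p$). Then $\mu(|a_{2N}-a_N|>N^{-\delta/2})\les N^{-p\delta/2}$ is summable over dyadic $N$ once $p$ is fixed large, so Borel--Cantelli gives $\sum_N|a_{2N}-a_N|<\infty$ $\mu$-a.s., hence $(a_N)_{N\ \text{dyadic}}$ converges $\mu$-a.s. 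I expect the only genuinely delicate step to be the $h_N$ estimate: it is precisely there that the cancellation recorded in \eqref{conj_exponential} — that the two most-differentiated factors are both built from $\bar u$ — is used to absorb the loss of $s$ derivatives and still produce the decay $N^{-1+}$; granted that input, the estimate on $g_N$ and the probabilistic wrap-up are routine consequences of Proposition \ref{algebraprop} and standard Gaussian moment bounds.
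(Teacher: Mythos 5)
Your proof is correct and follows essentially the same route as the paper: rewrite $S(P_{\le N}u)$ as a pairing of the doubly-conjugated high-derivative product (estimated in $\mathcal FL^{-\s,1}$ via \eqref{conj_exponential} after dyadic decomposition) against the nonlinear factor (estimated in $\mathcal FL^{\s,\infty}$ via Proposition \ref{algebraprop}), and conclude from a quantitative Cauchy estimate along dyadic scales. The only cosmetic difference is that the paper localizes to balls $B_R$ and uses an auxiliary exponent $\s'<\s$ to get the smallness of the increment deterministically on the ball, whereas you obtain it from Gaussian moment bounds on $\|P_{2N}u\|$ and finish with Chebyshev and Borel--Cantelli.
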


\begin{proof}
Let $B_R$ be the ball 
$$ B_R = \{ u \in \FL^{\s, \infty}: \|u\|_{\FL^{\s, \infty}} \le R\}. $$
We introduce the notation
\begin{equation}\label{Bqdef}
    q(u)=|u|^{2k-2}u^2, \quad B(u) = (D^s P_{N} \bar u D^{s-2}  P_{M} \bar u).
\end{equation}
Note that with this notation, we have that 
$$ S(P_{\le N} u) = \int_{\T^2 }q(P_{\le N} u) B(P_{\le N} u).$$
Therefore, it is enough to show that for $M>N$ dyadic,
\begin{equation} \label{to_show_conv}
    \E  [\1_{B_R}(u) \big|\int_{\T^2} \big(q(P_{\le M} u) B(P_{\le M} u) - q(P_{\le N} u) B(P_{\le N} u)\big) \big|^2]
\les_R N^{0-}. 
\end{equation} 
For $2< \s' < \s $, from the algebra property we get that 
\begin{equation}\label{algebra_smallness}
    \|q(P_{\leq N}u) - q(P_{\leq M} u)\|_{{\mathcal F}L^{\s',\infty}}\les \|u\|^{2k-1}_{{\mathcal F}L^{\s',\infty}} \|P_{\leq M} u - P_{\leq N} u\|_{{\mathcal F}L^{\s',\infty}}\les N^{-(\s-\s')}
R^{2k}.
\end{equation}
From \eqref{conj_exponential}, by expanding $B$ in dyadic pieces, we have that 
\begin{equation} \label{conj_exponential_2}
       \E \| B(P_{\le N} u) \|_{\FL^{-\s,1}}^2 \les 1, \quad \E \| B(P_{\le N} u) - B(P_{\le M} u) \|_{\FL^{-\s,1}}^2 \les N^{-1}.
\end{equation}
Therefore, from \eqref{algebra_smallness} and \eqref{conj_exponential_2}, we obtain 
\begin{equation} \label{3epsBq}
\begin{aligned}
&\E  [\1_{B_R}(u) \big|\int_{\T^2} \big(q(P_{\le M} u) B(P_{\le M} u) - q(P_{\le N} u) B(P_{\le N} u)\big) \big|^2] \\
& 
\les \E [\1_{B_R}(u) \| B(P_{\le N} u) \|_{\FL^{-\s',1}} \| q(P_{\le N} u) - q(P_{\le M} u)\|_{\FL^{\s',\infty}}] \\
&\phantom{\les\ 
}+ \E [\1_{B_R}(u) \| B(P_{\le N} u) - B(P_{\le M}u)\|_{\FL^{-\s',1}} \| q(P_{\le M} u)\|_{\FL^{\s',\infty}}]\\
&\les N^{-(\s-\s')}R^{2k} + N^{-1}R^{2k},
\end{aligned} 
\end{equation}

which shows \eqref{to_show_conv}.
\end{proof}
\begin{lemma} \label{lem:Sexploc}
    Let $R > 0$, and let $B_R$ be the ball 
    $$ B_R = \{ u \in \FL^{\s, \infty}: \|u\|_{\FL^{\s, \infty}} \le R\}. $$ 
    For every $1 < p < \infty$, we have that   
    \begin{equation} \label{Smomentbound}
        \int_{B_R} \exp(p |S(u)|) d \mu(u) < \infty. 
    \end{equation}
\end{lemma}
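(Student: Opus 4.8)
The plan is to split $S$ at a dyadic frequency threshold $N_0$, to be chosen at the end depending on $p$ and $R$: the low-frequency piece $S(P_{\le N_0}(\cdot))$ will be bounded deterministically on $B_R$, while the remainder $S-S(P_{\le N_0}(\cdot))$ will be estimated in $L^p(\mu)$ by $\delta(N_0,R)\,p$ with $\delta(N_0,R)\to 0$ as $N_0\to\infty$; one then expands the exponential and chooses $N_0$ so large that $2ep\,\delta(N_0,R)<1$. The necessity of this splitting is the one genuine subtlety: since $S$ lies in the (finite) Wiener chaos generated by $\phi^\omega$, treating it as a fixed random variable gives only a bound $\|S\1_{B_R}\|_{L^p(\mu)}\lesssim_R p$, which makes $\int_{B_R}\exp(p|S|)\,d\mu$ finite only for $p$ below a threshold; to reach \emph{every} $p$ one must make the linear-in-$p$ part of the moment arbitrarily small, which is where the quantitative decay in the chaos estimates is used.

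Write $q(u)=|u|^{2k-2}u^2$ and $B(u)=D^s\bar u\, D^{s-2}\bar u$, so that, as in the proof of Lemma~\ref{lem:Sconvergence}, $S(P_{\le N}u)=-\tfrac k2\Re\int_{\T^2}q(P_{\le N}u)\,B(P_{\le N}u)$, and $B(P_{\le N}u)$ decomposes into dyadic blocks $D^sP_{N'}\bar u\, D^{s-2}P_{M'}\bar u$ with $N',M'\le N$. Fix $2<\s'<\s$. Using the duality between $\FL^{\s',\infty}$ and $\FL^{-\s',1}$ (that is, $|\int fg|\le\|f\|_{\FL^{\s',\infty}}\|g\|_{\FL^{-\s',1}}$) and telescoping $q(P_{\le N}u)-q(P_{\le N_0}u)$, for $N\ge N_0$ one obtains
\[
\big|S(P_{\le N}u)-S(P_{\le N_0}u)\big|\le \tfrac k2\|q(P_{\le N}u)-q(P_{\le N_0}u)\|_{\FL^{\s',\infty}}\|B(P_{\le N}u)\|_{\FL^{-\s',1}}+\tfrac k2\|q(P_{\le N_0}u)\|_{\FL^{\s',\infty}}\|B(P_{\le N}u)-B(P_{\le N_0}u)\|_{\FL^{-\s',1}}.
\]
On $B_R$, the algebra property (Proposition~\ref{algebraprop} together with $\FL^{\s',\infty}\subset\mathcal A$) and the elementary bound $\|u-P_{\le N_0}u\|_{\FL^{\s',\infty}}\le R\,N_0^{\s'-\s}$ give, uniformly in $N\ge N_0$,
\[
\|q(P_{\le N_0}u)\|_{\FL^{\s',\infty}}\lesssim R^{2k},\qquad \|q(P_{\le N}u)-q(P_{\le N_0}u)\|_{\FL^{\s',\infty}}\lesssim R^{2k}N_0^{\s'-\s};
\]
and since $S(P_{\le N_0}u)$ only involves frequencies $\le N_0$, it is bounded on $B_R$ by a constant $C(N_0,R)$ (brute force, using that on $B_R$ the two factors of $B(P_{\le N_0}u)$ are bounded in $\FL^{0,\infty}$ by $R$ times powers of $N_0$, and there are $\lesssim N_0^2$ frequencies).

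For the remainder, \eqref{conj_exponential} applies verbatim with $\s'$ in place of $\s$ (legitimate since $2<\s'<s$). Summing over dyadic blocks and using $\sum_{N',M'}\min(M'^{-1},N'^{-1})<\infty$, and observing that in $B(P_{\le N}u)-B(P_{\le N_0}u)$ only blocks with $\max(N',M')>N_0$ occur (whose total contribution is $O(N_0^{0-})$), Minkowski's inequality in $L^p(\mu;\FL^{-\s',1})$ gives, uniformly in $N$,
\[
\big\|\,\|B(P_{\le N}u)\|_{\FL^{-\s',1}}\,\big\|_{L^p(\mu)}\lesssim p,\qquad \big\|\,\|B(P_{\le N}u)-B(P_{\le N_0}u)\|_{\FL^{-\s',1}}\,\big\|_{L^p(\mu)}\lesssim p\,N_0^{0-}.
\]
Combining the two displays, taking $L^p(\mu)$ norms in the bound for $|S(P_{\le N}u)-S(P_{\le N_0}u)|$ restricted to $B_R$, and letting $N\to\infty$ (Fatou and the a.e.\ convergence $S(P_{\le N}u)\to S(u)$ from Lemma~\ref{lem:Sconvergence}) yields
\[
\big\|(S-S(P_{\le N_0}(\cdot)))\,\1_{B_R}\big\|_{L^p(\mu)}\lesssim R^{2k}\big(N_0^{\s'-\s}+N_0^{0-}\big)\,p=:\delta(N_0,R)\,p,
\]
hence $\|S\1_{B_R}\|_{L^j(\mu)}\le C(N_0,R)+\delta(N_0,R)\,j$ for all $j$. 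Expanding $\exp(p|S|)\1_{B_R}=\sum_{j}\tfrac{p^j}{j!}(|S|\1_{B_R})^j$ and using $(a+b)^j\le2^j(a^j+b^j)$ and $j^j/j!\le e^j$,
\[
\int_{B_R}\exp(p|S|)\,d\mu\lesssim e^{2pC(N_0,R)}+\sum_{j\ge0}\big(2ep\,\delta(N_0,R)\big)^j,
\]
which is finite once $N_0=N_0(p,R)$ is taken large enough that $2ep\,\delta(N_0,R)<1$ — possible because $\s'<\s$. The main obstacle, and the only delicate point, is exactly this last step: one must use the explicit decay (the $N^{0-}$, resp.\ $M^{-1}\wedge N^{-1}$, in the chaos estimates), not merely their finiteness, to drive the linear-in-$p$ coefficient down to zero and thereby cover every $p$.
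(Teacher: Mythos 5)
Your proof is correct and follows essentially the same route as the paper: split $S$ at a dyadic threshold $N_0=N_0(p,R)$, bound the low-frequency piece deterministically on $B_R$, estimate the tail $S-S(P_{\le N_0}\cdot)$ via the chaos bound \eqref{conj_exponential} combined with the algebra/duality argument of \eqref{3epsBq}, and then sum the exponential series after choosing $N_0$ large. If anything, you are more careful than the paper's write-up in tracking the linear-in-$p$ growth from the Wiener chaos estimate and in spelling out why the series converges via $j^j/j!\le e^j$.
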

\begin{proof}
    Note that, for every $N_0$ dyadic, we have that 
    $$ |S(P_{N_0} u)| \les \| u \|_{\FL^{\s,\infty}}^{2k} N_{0}^{2s-2}. $$
    Therefore, it is enough to show that there exists $N_0$ dyadic such that 
        \begin{equation}
        \int_{B_R} \exp(p |S(u) - S(P_{N_0}(u))|) d \mu(u) < \infty. 
    \end{equation}
    Recall the notation for $B(u), q(u)$ in \eqref{Bqdef}. By \eqref{conj_exponential}, for every $2 < \s' < \s$, we obtain that 
    \begin{equation}
        \E \| B(u) - B(P_{\le N_0} u)\|_{\FL^{-\s', 1}}^p \le C^p N_{0}^{-p}, 
    \end{equation}
    for some constant $C = C(\s')$. Therefore, proceeding as in \eqref{3epsBq}, we obtain that 
    \begin{equation}
        \E \| S(u) - S(P_{\le N_0} u)\|_{\FL^{-\s', 1}}^p \le C^p N_{0}^{-p \min(1, \s - \s')},
    \end{equation}
    for a (different) constant $C = C(\s, \s')$. From this, we deduce that 
    \begin{align*}
        &\int_{B_R} \exp(p |S(u) - S(P_{N_0}(u))|) d \mu(u) \\
        &= \sum_{k=0}^\infty \int_{B_R} \frac{p^k |S(u) - S(P_{N_0}(u))|^k}{k!} d\mu(u)\\
        &\le \sum_{k=0}^\infty \frac{C^k N_{0}^{-k \min(1, \s - \s')}}{k!}.
    \end{align*}
    Therefore, by picking $N_0 \gg 1$ such that 
    $$ C N_0^{-\min(1, \s - \s')} < e,$$
    the series converges, and we complete the proof.
\end{proof}

\section{Multilinear estimates along the flow} \label{sec:butcherdeng}
\begin{definition} \label{XTspace}
    Let $\sigma < s$, $T \ge 0$. Define the space $X_T$ given by the space-time functions such that the following norm is finite: 
    \begin{equation}
        \| f \|_{X_T} = \sup_{n \in \Z^2, t\in [0,T]} \jb{n}^\sigma |\ft{f}(t,n)| + \sup_{n \in \Z^2, t\in [0,T]} \jb{n}^\sigma \big|\partial_t \big(e^{it|n|^2}\ft{f}(t,n)\big)\big|
    \end{equation}
    Define $\cj X_T$ to be the space 
    $$ \cj X_T = \{f: \cj f \in X_T\},  $$
    with norm
    $$ \| f\|_{\cj X_T} := \| \cj f\|_{X_T}. $$
\end{definition}
\subsection{Counting estimate bounds}

Recall the following counting estimate from \cite[Lemma 4.5]{DNY3}.
\begin{proposition}
    Let $m\in \Z^2, \kappa \in \Z$, and let $N_1,N_2,N_3 \ge 1$ be dyadic numbers. Consider the set 
    \begin{align*}
        &S:= S_{m,\kappa} := S_{m,\kappa}(N_1,N_2,N_3) \\
        &:= \{ (k_1,k_2,k_3): \pm k_1 \pm k_2 \pm k_3 = m, \pm k_1^2 \pm k_2^2 \pm k_3^2 = \kappa, k_j \sim N_j\} \\
        &\phantom{XX}\setminus \bigcup_{j=1,2,3} \{  (k_1,k_2,k_3): k_j \pm k_{j+1} = 0\}.  
    \end{align*}
    Then 
    \begin{equation} \label{counting}
       |S_{m,\kappa}(N_1,N_2,N_3)| \les N_{(2)}^{1+} N_{(3)}^{1+},
    \end{equation}
    where $N_{(1)} \ge N_{(2)}\ge N_{(3)}$ is a rearrangement of $N_1,N_2,N_3$, and the implicit constants are independent of $m, \kappa$.
\end{proposition}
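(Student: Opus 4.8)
\emph{Proof proposal.}
The plan is to use the vector equation to eliminate one frequency, reducing the problem to counting lattice points on a conic, and then to invoke the divisor bound for sums of two squares (on circles) and the elementary segment estimate (on lines). After a harmless relabelling we may assume $N_1 \geq N_2 \geq N_3$, so that $N_{(j)} = N_j$; note that $S$ is empty unless $|m| \les N_1$. From $\pm k_1 \pm k_2 \pm k_3 = m$ we write $k_1 = \pm(m \mp k_2 \mp k_3)$ and substitute into $\pm|k_1|^2 \pm |k_2|^2 \pm |k_3|^2 = \kappa$, obtaining a single scalar equation $Q(k_2,k_3) = \kappa'$ in which $Q$ is a quadratic polynomial with two key properties: the coefficient of the bilinear form $k_2\cdot k_3$ equals $\pm 2$, hence is nonzero, while the coefficients of $|k_2|^2$ and $|k_3|^2$ lie in $\{0,\pm 2\}$; and, on the non-resonant set, for each fixed $k_3$ the equation $Q(k_2,\cdot) = \kappa'$ is not identically satisfied, the only way $Q(k_2,\cdot)$ can be constant in $k_2$ being $k_1 = \pm k_2$, which is excluded. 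Consequently, for fixed $k_3$ the admissible $k_2$ lie either on a circle $|k_2 - c(k_3)|^2 = r(k_3)^2$ with $c(k_3)\in \tfrac12\Z^2$ (when the coefficient of $|k_2|^2$ is $\pm 2$), or on a line $k_2\cdot w(k_3) = \psi(k_3)$ with $w(k_3)\neq 0$ (when that coefficient vanishes — this includes the factored case $(k_2-a)\cdot(k_3-b)=d$), where $c(k_3), w(k_3)$ depend affinely on $k_3$.

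When $N_1 \sim N_2$ this already suffices. Fix $k_3$, at a cost of $\les N_3^2 \leq N_2 N_3$ choices. On the circle, $r(k_3) \les N_1 \sim N_2$ (recall $|\kappa| \les N_1^2$), so the bound $r_2(n) \les_\eps n^\eps$ on the number of representations of $n$ as a sum of two squares gives $\les_\eps N_2^\eps$ admissible $k_2$, and summing over $k_3$ yields $\les N_3^2 N_2^\eps \les N_2^{1+}N_3^{1+}$. On the line there are $\les 1 + N_2/\|w(k_3)\|$ admissible $k_2$, where $\|w\|$ denotes the length of the primitive lattice vector parallel to $w$. One splits the $k_3$-sum by the dyadic size $G$ of $\gcd(w(k_3))$: the part with $G = O(1)$ contributes $\les N_3^2 \leq N_2 N_3$, and for $G \gg 1$ one uses that the $k_3$ with $|w(k_3)|\sim N_1$ and $\gcd(w(k_3))\sim G$ correspond to directions nearly parallel to $m$, hence form a sparse set, together with the solvability constraint $\gcd(w(k_3))\mid \psi(k_3)$, which costs a further factor $\sim G$; a short computation then bounds this part by $\les N_2^{1+}N_3^{1+}$ as well.

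The remaining, and hardest, case is $N_1 \gg N_2 \geq N_3$, in which the circle can have radius (resp. the line's normal can have primitive length) of order $N_1$, so the naive bound above, summed over all $\les N_3^2$ choices of $k_3$, is far too lossy. Here $|m|\sim N_1$, so the \emph{linear} part of $Q$, of size $\les N_1 N_2$, dominates its quadratic part, of size $\les N_2^2$; hence $Q(k_2,k_3)=\kappa'$ forces the combination $p := \pm k_2 \pm k_3$ appearing in the linear term $m\cdot p$ of $Q$ to satisfy $m\cdot p \in I$ for a fixed interval $I$ of length $\les N_2^2$, which confines $p$ to a thin strip, and once $p$ is fixed the residual equation for $k_2$ is a conic of radius $\les N_2$, carrying only $\les_\eps N_2^\eps$ (circle) or $\les 1 + N_2/\|p\|$ (line) admissible $k_2$. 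The delicate point, which I expect to be the technical heart of the proof, is that not every $p$ in the strip is admissible: one must combine the divisibility/solvability constraints with the localisation $|k_3|\sim N_3$ — again organising the bookkeeping by the dyadic size of the relevant greatest common divisors and using the sparsity of directions nearly parallel to $m$ — to show that the admissible $p$ contribute at most $\les N_2^{1+}N_3^{1+}$ in total. Everything else in the argument is routine lattice-point counting.
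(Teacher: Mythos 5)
First, a point of context: the paper does not prove this proposition at all — it is quoted verbatim from Deng--Nahmod--Yue \cite[Lemma 4.5]{DNY3}, so there is no in-paper argument to compare against. Your general strategy (eliminate $k_1$ by the linear relation, then for the remaining pair count lattice points on circles via the sum-of-two-squares/divisor bound and on lines via primitive-vector spacing, with dyadic bookkeeping in the relevant gcd's) is indeed the standard route to such counting estimates, so the overall approach is reasonable.

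However, as written the proposal has a genuine gap precisely where you announce "the technical heart": the case $N_1 \gg N_2 \ge N_3$ is not proved, only planned, and the plan as stated does not close. Your reduction confines $p = \pm k_2 \pm k_3$ to the strip $\{m\cdot p \in I\}$ with $|I| \lesssim N_2^2$, and you call this strip "thin"; but its width is $\sim N_2^2/|m| \sim N_2^2/N_1$, which is thin only when $N_1 \gtrsim N_2^2$. In the intermediate regime $N_2 \ll N_1 \ll N_2^2$ (say $N_3 = 1$, $N_1 = N_2^{3/2}$) the strip intersected with $\{|p| \lesssim N_2\}$ contains about $N_2^3/N_1 = N_2^{3/2}$ lattice points $p$, so even granting only $N_2^{0+}$ admissible $k_2$ per $p$ in the circle subcase, your bookkeeping yields $N_2^{3/2+}$, which exceeds the target $N_2^{1+}N_3^{1+} = N_2^{1+}$. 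The estimate is still true there, but one needs an ingredient you do not state: for instance, split again at $N_1 \sim N_2^2$ and, when $N_1 \lesssim N_2^2$, revert to fixing $k_3$ first and using the lattice-points-on-a-circle bound, observing that the radius is $\lesssim N_1 \le N_2^2$ so the divisor-type loss $N_1^{0+}$ is admissible as $N_2^{0+}$; the thin-strip argument (combined, in the line subcase, with the localisation $|p-k_2| \sim N_3$, which improves the per-$p$ line count to $1 + N_3\gcd(p)/|p|$) should then be reserved for $N_1 \gtrsim N_2^2$. Relatedly, the deferred gcd/solvability computation in the line cases is exactly where the work lies and is not carried out; and a minor inaccuracy: in the $N_1 \sim N_2$ line case the $G = O(1)$ part contributes $\lesssim N_3^2 + N_2 N_3$, not $\lesssim N_3^2$ (harmless for the final bound, but symptomatic of the level of detail). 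In short: right framework, but the decisive case is left as an expectation, and the "thin strip" claim on which it rests is false in a whole regime, so the proposal does not yet constitute a proof of \eqref{counting}.
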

\begin{proposition}
    Let $f_1, f_2, \dotsc f_{k+1} \in X_T$, $ g_{1},\dotsc, g_{k+1} \in \cj X_T$. Let $N_1,\dotsc,N_{k+1}$, $M_1,\dotsc,M_{k+1}$ be dyadic numbers. Let 
    $$ N_{(1)} \ge N_{(2)} \ge \dotsb N_{(2k+2)} $$
    be a rearrangement of $N_1,\dotsc,N_{k+1}, M_1,\dotsc,M_{k+1},$ let $n_{(1)}, \dotsc, n_{(2k+2)}$ be the corresponding rearrangement of  $n_1,\dotsc, n_{k+1}, m_{1}, \dotsc, m_{k+1}$, and let $f_{(1)}, \dotsc, f_{(2k+2)}$ be the corresponding rearrangement of $f_1, f_2, \dotsc f_{k+1}, g_{1},\dotsc, g_{k+1}$.
    Then for $0\leq t \leq T$
    \begin{equation}
        \begin{aligned}
        &\Big|\int_0^t\int  \prod_{j=1}^{k+1} P_{N_j}f_j(\tau) P_{M_j}g_j(\tau) d\tau - \int_0^T \Big(\int P_{N_{(1)}} f_{(1)}(\tau) P_{N_{(2)}} f_{(2)}(\tau)\Big) \int \prod_{j=3}^{2k+2} P_{N_{(j)}}f_{(j)}(\tau) d\tau  \Big| \\
        &\les_T N_{(1)}^{1-2\s+} N_{(3)}^{1-\s+} \prod_{j=4}^{2k+2} N_{(j)}^{2-\s} \prod_{j=1}^k \| f_j\|_{X_T}\| g_j\|_{\cj X_T}.
    \end{aligned} \label{butcherdeng}
    \end{equation}
    Moreover, 
    \begin{equation}
        \begin{aligned}
        &\Big|\int_0^t \Big(\int P_{N_{(1)}} f_{(1)}(\tau) P_{N_{(2)}} f_{(2)}(\tau)\Big) \int \prod_{j=3}^{2k+2} P_{N_{(j)}}f_{(j)}(\tau) d\tau  \Big| \\
        &\les_T N_{(1)}^{2-2\s+} N_{(3)}^{-\s} \prod_{j=4}^{2k+2} N_{(j)}^{2-\s} \prod_{j=1}^k \| f_j\|_{X_T}\| g_j\|_{\cj X_T}.
    \end{aligned} \label{butcherdengpairing}
    \end{equation}
\end{proposition}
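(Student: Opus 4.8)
The plan is to estimate the two multilinear forms by passing to the Fourier side, using the counting estimate \eqref{counting} together with the dispersive gain hidden in the time derivative that is controlled by the $X_T$-norm. First I would write each factor in Fourier coordinates: for $P_{N_j}f_j$, with frequency $n_j$, the $X_T$-norm bounds $\jb{n_j}^\sigma|\ft{f_j}(\tau,n_j)|$ and, crucially, $\jb{n_j}^\sigma|\partial_\tau(e^{i\tau|n_j|^2}\ft{f_j}(\tau,n_j))|$; similarly for $P_{M_j}g_j$ with frequency $m_j$, where the phase is $e^{-i\tau|m_j|^2}$ since $g_j\in\cj X_T$. Multiplying out the product $\prod_j P_{N_j}f_j \, P_{M_j}g_j$ and integrating over $\T^2$ forces $\sum n_j - \sum m_j = 0$ (i.e.\ the output frequency is $0$), and the aggregate phase is $e^{i\tau\kappa}$ with $\kappa = -\sum|n_j|^2 + \sum|m_j|^2$. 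The term being subtracted in \eqref{butcherdeng} is exactly the ``resonant'' contribution where the two highest-frequency factors $f_{(1)}, f_{(2)}$ are paired against each other (so their phases cancel), so the difference on the left of \eqref{butcherdeng} is supported on the nonresonant set where $\kappa\neq 0$ after removing pairings, which is precisely the set $S_{0,\kappa}$ appearing in the counting proposition applied to the three highest frequencies.

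Next I would carry out the time integration by parts. On the nonresonant part, $\frac{1}{i\kappa}\partial_\tau e^{i\tau\kappa} = e^{i\tau\kappa}$, so integrating $\int_0^t$ by parts in $\tau$ produces a factor $\kappa^{-1}$ together with boundary terms at $\tau=0,t$ and a term where $\partial_\tau$ hits one of the slowly-varying amplitudes $e^{i\tau|n_{(j)}|^2}\ft{f_{(j)}}$ — all of these are controlled by $\|f_j\|_{X_T}\|g_j\|_{\cj X_T}$ by definition of the norm. The key arithmetic input is that on $S_{0,\kappa}$ with the two top frequencies $N_{(1)}\sim N_{(2)}$, we have $|\kappa|\gtrsim 1$ and, summing over $\kappa$, the number of frequency configurations is $\les N_{(1)}^{1+}N_{(3)}^{1+}$ by \eqref{counting} applied to $(N_{(1)}, N_{(2)}, N_{(3)})$ (the remaining frequencies $N_{(4)},\dots$ are summed trivially, each contributing its full range $\les N_{(j)}^2$). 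Combining: each of the $2k+2$ factors costs $\jb{n}^{-\sigma}$ from the $X_T/\cj X_T$ norm; two top factors give $N_{(1)}^{-2\sigma}$, the third gives $N_{(3)}^{-\sigma}$, the rest give $\prod_{j\ge4}N_{(j)}^{-\sigma}$; the counting gives $N_{(1)}^{1+}N_{(3)}^{1+}$ and the free sums over $N_{(4)},\dots$ give $\prod_{j\ge4}N_{(j)}^2$; the gain $\kappa^{-1}$ summed against $\sum_\kappa 1$ is already absorbed in the counting bound (this is why \eqref{counting} counts the full set including the $\kappa$ sum). This yields $N_{(1)}^{1-2\sigma+}N_{(3)}^{1-\sigma+}\prod_{j=4}^{2k+2}N_{(j)}^{2-\sigma}$, which is \eqref{butcherdeng}.

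For \eqref{butcherdengpairing} the situation is easier: here the two top factors are paired, $n_{(1)} = \pm n_{(2)}$ (up to conjugation structure), so there is no oscillation to exploit between them and we simply estimate by absolute values. The constraint $n_{(1)}=\pm n_{(2)}$ removes one free summation, so instead of $N_{(1)}^{1+}$ (from counting with a $\kappa$-sum) we get the full $N_{(1)}^{2}$ but only \emph{one} copy of it (the second top frequency is slaved); combined with $N_{(1)}^{-2\sigma}$ from the two norms this is $N_{(1)}^{2-2\sigma}$. The inner integral $\int \prod_{j\ge3}P_{N_{(j)}}f_{(j)}$ again forces $\sum_{j\ge3}(\pm n_{(j)})=0$, removing one more summation among $N_{(3)},\dots$; choosing to slave the smallest among them is not optimal — rather, we slave one frequency comparable to $N_{(3)}$, so $N_{(3)}$ contributes $N_{(3)}^{-\sigma}$ with no compensating volume factor while $N_{(4)},\dots$ each contribute $N_{(j)}^{2-\sigma}$. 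This gives $N_{(1)}^{2-2\sigma+}N_{(3)}^{-\sigma}\prod_{j=4}^{2k+2}N_{(j)}^{2-\sigma}$, as claimed. The $+$ on $N_{(1)}$ here comes from the near-diagonal case $N_{(1)}\sim N_{(2)}$ where the pairing $n_{(1)}=\pm n_{(2)}$ can fail by $O(1)$ and one uses \eqref{counting} directly.

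\textbf{Main obstacle.} The bookkeeping of which factors are ``top'' and the careful extraction of the pairing term is the delicate point: one must check that after removing the contribution where $f_{(1)}$ pairs with $f_{(2)}$, the remainder genuinely lies on the nonresonant set $S_{0,\kappa}\setminus\{\text{pairings}\}$ so that \eqref{counting} applies with $N_{(2)}$ and $N_{(3)}$ in the roles of the second and third largest, and that the boundary terms from integration by parts in $\tau$ (at $\tau=0$ and $\tau=t$) are themselves of the pairing type or otherwise absorbed — these boundary terms are time-independent evaluations, so they are estimated by the same counting argument without the $\kappa^{-1}$ gain, and one needs $N_{(1)}^{1-2\sigma+}$ to still close, which it does since $\sigma>2$ forces $1-2\sigma<-3$. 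A secondary subtlety is the exclusion of degenerate pairings $k_j\pm k_{j+1}=0$ in the definition of $S_{m,\kappa}$: those configurations must be separated out by hand before applying \eqref{counting}, but each such configuration reduces the effective number of free frequencies and is therefore \emph{better} than the generic bound, so it is harmless.
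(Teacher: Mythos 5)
Your plan follows the same route as the paper's proof: pass to Fourier variables, integrate by parts in time on the non-paired set using the phase $e^{i\tau\kappa}$ so that the derivative falls on the slowly varying amplitudes controlled by the $X_T$/$\cj X_T$ norms, apply the counting bound \eqref{counting} to the three largest frequencies while summing the remaining ones trivially, treat separately the configurations where $n_{(3)}$ pairs with $n_{(1)}$ or $n_{(2)}$ (these force $N_{(3)}\sim N_{(1)}$ and are indeed no worse than the generic count), and prove \eqref{butcherdengpairing} by a direct count in which the pairing slaves one top frequency and the zero-sum constraint slaves $n_{(3)}$. All of this is exactly the paper's argument, and your final exponents are the right ones.

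However, three specific points in your mechanism are wrong as written, and taken literally they would not yield the claimed bound. First, \eqref{counting} is a bound for \emph{fixed} $(m,\kappa)$, uniform in $\kappa$; it does not ``count the full set including the $\kappa$ sum''. The union over $\kappa$ has cardinality of order $N_{(2)}^{2}N_{(3)}^{2}$, far larger than $N_{(2)}^{1+}N_{(3)}^{1+}$. The correct bookkeeping keeps the weight $\jb{\kappa}^{-1}$ coming from the antiderivative $(e^{i\tau\kappa}-1)/(i\kappa)$ and uses $\sum_{|\kappa|\les N_{(1)}^{2}}\jb{\kappa}^{-1}|E_\kappa|\les N_{(1)}^{0+}\sup_{\kappa}|E_\kappa|$, followed by the per-$\kappa$ counting. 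Second, it is not true that $|\kappa|\gtrsim 1$ once the top-two pairing is removed: on $\T^2$ there are unpaired configurations with $\kappa=0$ (rectangles already for $k=1$). These are harmless, but they must be treated: for $\kappa=0$ the time integral is just bounded by $T$ and the counting at $\kappa=0$ alone gives the claimed $N_{(1)}^{1-2\s+}N_{(3)}^{1-\s+}\prod_{j\ge4}N_{(j)}^{2-\s}$; equivalently one uses $\big|\tfrac{e^{i\tau\kappa}-1}{\kappa}\big|\les\min(T,\jb{\kappa}^{-1})$. Third, your claim that the boundary terms at $\tau=0,t$ are estimated ``without the $\kappa^{-1}$ gain'' and still close is incorrect: without that weight the $\kappa$-sum costs a factor $N_{(1)}^{2}$ instead of $N_{(1)}^{0+}$, giving only $N_{(1)}^{3-2\s+}N_{(3)}^{1-\s+}$, which is too weak both for \eqref{butcherdeng} and for the applications in Section 6 where the multipliers are as large as $N_{(1)}^{2s}$. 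Fortunately the boundary terms produced by your own integration by parts do carry the factor $(e^{i\tau\kappa}-1)/(i\kappa)$, so they are estimated exactly like the bulk term; your stated justification just needs to be replaced by this. (A minor extra point: in \eqref{butcherdengpairing} the pairing $n_{(1)}+n_{(2)}=0$ is exact by the structure of the expression, so no ``near-diagonal'' discussion is needed; the direct count already gives $N_{(1)}^{2-2\s}$.) With these corrections your argument becomes the paper's proof.
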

\begin{proof}
    Let $F_j = e^{it\Delta} f_j$, 
    $G_j = e^{-it\Delta} g_j$. We have that 
    $$ \|F_j\|_{C^1([0,T], {\mathcal F}L^{\s,\infty})} \les \|f_j\|_{X_T},$$
    and similarly for $G_j$. Therefore, writing the LHS of \eqref{butcherdeng} in Fourier variables, and integrating by parts, we have that 
    \begin{align*}
        &\int_0^t\int  \prod_{j=1}^{k+1} P_{N_j}f_j(\tau) P_{M_j}g_j(\tau) d\tau - \int_0^t \Big(\int P_{N_{(1)}} f_{(1)}(\tau) P_{N_{(2)}} f_{(2)}(\tau)\Big) \int \prod_{j=3}^{2k+2} P_{N_{(j)}}f_{(j)}(\tau) d\tau \\
        &= \int_0^t \sum_{\substack{n_1 + \dotsb n_{k+1} + m_1 + \dotsb + m_{k+1} =0 \\ n_{(1)} \pm n_{(2)} \neq 0 \\
        n_j \sim N_j, m_j \sim M_j}} e^{i\tau \sum_{j=1}^{k+1} |n_j|^2 - |m_j|^2}  \prod_{j=1}^{k+1}\ft{F_j}(\tau,n_j)   \ft{G_j}(\tau, m_j)  d\tau \\
        & =\sum_{|\kappa| \les N_{(1)}^2}\sum_{E_\kappa} \int_0^t e^{i\tau \kappa} \prod_{j=1}^{k+1}\ft{F_j}(\tau, n_j) \ft{G_j}(\tau, m_j)  d\tau \\
        &= \left[\sum_{|\kappa| \les N_{(1)}^2}\sum_{E_\kappa} \frac{e^{i\tau \kappa} -1}{i\kappa} \prod_{j=1}^{k+1}\ft{F_j}(\tau, n_j) \ft{G_j}(\tau, m_j)  d\tau\right]_0^t \\
        &\phantom{=} - \sum_{|\kappa| \les N_{(1)}^2}\sum_{E_\kappa} \int_0^t \frac{e^{i\tau \kappa} -1}{i\kappa} \sum_{j_0 = 1}^{k+1} \partial_t(\ft{F_{j_0}}(\tau, n_j) \ft{G_{j_0}}(\tau, m_j)) \prod_{j\neq j_0}\ft{F_j}(\tau, n_j) \ft{G_j}(\tau, m_j)  d\tau,
    \end{align*}
    where 
    \begin{equation*}
            \begin{multlined}
        E_\kappa:= \{ (n_1, \dotsc, n_{k+1}, m_1, \dotsc, m_{k+1}): 
        n_j \sim N_j, m_j \sim M_j, n_{(1)} \pm n_{(2)} \neq 0\\ \sum_{j=1}^{k+1} n_j + m_j = 0, \sum_{j=1}^{k+1} |n_j|^2 - |m_j|^2 = \kappa\}.
    \end{multlined}
    \end{equation*}
    Therefore, we obtain 
    \begin{align*}
        &\bigg|\int_0^t\int  \prod_{j=1}^{k+1} P_{N_j}f_j(\tau) P_{M_j}g_j(\tau) d\tau - \int_0^t \Big(\int P_{N_{(1)}} f_{(1)}(\tau) P_{N_{(2)}} f_{(2)}(\tau)\Big) \int \prod_{j=3}^{2k+2} P_{N_{(j)}}f_{(j)}(\tau) d\tau\bigg| \\
        &\les_T \sum_{|\kappa| \les N_{(1)}}\sum_{E_\kappa} \frac{1}{\jb{\kappa}}\prod_{j=1}^{k+1} N_j^{-\s}\|F_j\|_{C^1([0,T], {\mathcal F}L^{\s,\infty})} M_j^{-\s}\|G_j\|_{C^1([0,T], {\mathcal F}L^{\s,\infty})} \\
        &\les \prod_{j=1}^{2k+2} N_{(j)}^{-\s}\sum_{|\kappa| \les N_{(1)}} \frac{1}{\jb{\kappa}} |E_\kappa| \prod_{j=1}^{k+1} \|f_j\|_{X_T} \|g_j\|_{\cj X_T} \\
        & \les N_{(1)}^{0+}\sup_{|\kappa| \les N_{(1)}} | E_\kappa| \prod_{j=1}^{2k+2} N_{(j)}^{-\s} \prod_{j=1}^{k+1} \|f_j\|_{X_T} \|g_j\|_{\cj X_T}. 
    \end{align*}
Recall that, in order to have the condition $n_1+\dotsb+n_{k+1}+m_1+\dotsb+m_{k+1}=0,$
we must have that $N_{(1)} \sim N_{(2)}$.
Therefore, to complete the proof of \eqref{butcherdeng}, it is enough to show that 
\begin{equation} \label{Ekappacounting}
    |E_\kappa| \les N_{(1)}^{1+} N_{(3)}^{1+} \prod_{j=4}^{2k+2} N_{(4)}^{2}.
\end{equation}  
%Let $n_{(1)}, \dotsc, n_{(2k+2)}$ be the permutation of $n_1, \dotsc, n_{k+1}, m_1, \dotsc, m_{k+1} $ such that $n_{(j)} \sim N_{(j)}$. 
We have that 
\begin{align*}
    E_{\kappa} &\subseteq \bigcup_{n_{(4)}, \dotsc, n_{(2k+2)}} S_{\sum_{j=4}^{2k+2} \pm n_{(j)}, \kappa - \sum_{j=4}^{2k+2} \pm |n_{(j)}|^2}(N_{(1)}, N_{(2)}, N_{(3)}) \\
    &\phantom{X}\cup \bigcup_{l \in \{1,2\}} \Big\{ n_{(j)} \sim N_j, n_{(l)} = \pm n_{(3)}, \sum_{j\neq l,3} \pm n_{(j)} =0 \Big\}.
\end{align*}
In order for the sets in the second line to be non-empty, we must have that  $N_{(1)} \sim N_{(3)}$, which leads to 
\begin{align*}
    \Big|\bigcup_{l \in \{1,2\}} \Big\{ n_{(j)} \sim N_j, n_{(l)} = \pm n_{(3)}, \sum_{j\neq l,3} \pm n_{(j)} =0 \Big\}\Big| &\les 
    \Big|\{ n_{(3)} \sim N_3, n_{(j)} \sim N_{(j)}\}_{j \ge 4}\Big| \\
    &\les N_{(3)}^2 \prod_{j=4}^{2k+2} N_{(4)}^{2} \\
    & \les N_{(1)}^{1+} N_{(3)}^{1+} \prod_{j=4}^{2k+2} N_{(4)}^{2},
\end{align*}
which is the RHS of \eqref{Ekappacounting}. Moreover, by \eqref{counting}, we have that 
\begin{align*}
    &\Big|\bigcup_{n_{(4)}, \dotsc, n_{(2k+2)}} S_{\sum_{j=4}^{2k+2} \pm n_{(j)}, \kappa - \sum_{j=4}^{2k+2} \pm |n_{(j)}|^2}(N_{(1)}, N_{(2)}, N_{(3)})\Big| \\
    & \les \prod_{j=4}^{2k+2} N_{(4)}^{2} \sup_{n_{(j)} \sim N_{(j)} : j \ge 4} | S_{\sum_{j=4}^{2k+2} \pm n_{(j)}, \kappa - \sum_{j=4}^{2k+2} \pm |n_{(j)}|^2}(N_{(1)}, N_{(2)}, N_{(3)})| \\
    &\les N_{(1)}^{1+} N_{(3)}^{1+} \prod_{j=4}^{2k+2} N_{(4)}^{2}.
\end{align*}
Therefore, we obtain \eqref{Ekappacounting}, which concludes the proof of \eqref{butcherdeng}.

In order to show \eqref{butcherdengpairing}, we proceed in the same way, replacing the set $E_\kappa$ with 
  \begin{equation*}
    \begin{multlined}
        E_\kappa^p:= \Big\{ (n_1, \dotsc, n_{k+1}, m_1, \dotsc, m_{k+1}): 
        n_j \sim N_j, m_j \sim M_j, n_{(1)} \pm n_{(2)} = 0\\ \sum_{j=1}^{k+1} n_j + m_j = 0, \sum_{j=1}^{k+1} |n_j|^2 - |m_j|^2 = \kappa\Big \}.
    \end{multlined}
    \end{equation*}
We have the trivial inclusion
$$ E_\kappa^p \subseteq \Big \{ (n_1, \dotsc, n_{k+1}, m_1, \dotsc, m_{k+1}): n_j \sim N_j, m_j \sim M_j, n_{(1)} \pm n_{(2)} = 0, \sum_{j=1}^{k+1} n_j + m_j = 0\Big\},$$
from which we immediately deduce that 
$$ |E_\kappa^p| \les N_{(1)}^2 
 \prod_{j=4}^{2k+2} N_{(4)}^{2},$$
 which replaces \eqref{Ekappacounting} and leads to \eqref{butcherdengpairing}.
\end{proof}
We we will also need the following variant of the result above, which allows for the presence of a bounded Fourier multiplier. Since the proof is identical, we omit it.
\begin{proposition}
    Let $f_1, f_2, \dotsc f_{k+1} \in X_T$, $ g_{1},\dotsc, g_{k+1} \in \cj X_T$. Let 
    $$ \Psi: (\Z^2)^{2(k+1)} \to \R$$
    be a bounded function, and define the multilinear operator 
    \begin{equation} \label{PsiDef}
        \Psi(f_1,g_1,\dotsc, f_{k+1}, g_{k+1}) = \sum_{\substack{n_1 + \dotsb n_{k+1} \\
        + m_1 + \dotsb + m_{k+1} =0}} \Psi(n_1,m_1,\dotsc, n_{k+1},m_{k+1}) \prod_{j=1}^{k+1} \ft{f_j}(n_j) \ft{g_j}(n_j).
    \end{equation}
    Let $ N_{(1)} \ge N_{(2)} \ge \dotsb N_{(2k+2)} $ be a rearrangement of $N_1,\dotsc,N_{k+1}, M_1,\dotsc,M_{k+1},$ let $n_{(1)}, \dotsc, n_{(2k+2)}$ be the corresponding rearrangement of  $n_1,\dotsc, n_{k+1}, m_{1}, \dotsc, m_{k+1}$, and let $f_{(1)}, \dotsc, f_{(2k+2)}$ be the corresponding rearrangement of $f_1, f_2, \dotsc f_{k+1}, g_{1},\dotsc, g_{k+1}$. 
    Moreover, let 
    \begin{equation} \label{multiplierPaired}
        \Psi[f_{(1)},f_{(2)}](n_j: j \neq (1), (2)) = \sum_{n_{(1)} + n_{(2)} = 0} \Psi(n_1,\dotsc,n_{2k+2}) \ft{P_{N_{(1)}}f_{(1)}}(n_{(1)}) \ft{P_{N_{(2)}}f_{(1)}}(n_{(2)}).
    \end{equation}
    Then for every $0\leq t\leq T$
    \begin{equation} \label{butcherDengPsi}
    \begin{aligned}
        &\Big|\int_0^t \Psi(P_{N_1}f_1(\tau),P_{M_1}g_1(\tau),\dotsc, P_{N_{k+1}}f_{k+1}(\tau), P_{M_{k+1}}g_{k+1}(\tau)) d \tau\\
        &\phantom{XXX} - \int_0^T \Psi[f_{(1)}(\tau), f_{(2)}(\tau)](P_{N_{(3)}} f_{(3)(\tau)},\dotsc, P_{N_{(2k+2)}} f_{(2k+2)}(\tau))\Big| \\
        &\les_T \|\Psi\|_{l^\infty} N_{(1)}^{1-2\s+} N_{(3)}^{-\s} \prod_{j=4}^{2k+2} N_{(4)}^{2-\s} \prod_{j=1}^k \| f_j\|_{X_T}\| g_j\|_{\cj X_T}
    \end{aligned}
    \end{equation}
    and
    \begin{equation} \label{butcherDengPsi2}
    \begin{aligned}
        &\Big|\int_0^t \Psi[f_{(1)}(\tau), f_{(2)}(\tau)](P_{N_{(3)}} f_{(3)(\tau)},\dotsc, P_{N_{(2k+2)}} f_{(2k+2)}(\tau)) d \tau\Big| \\
        &\les_T \|\Psi\|_{l^\infty} N_{(1)}^{2-2\s+} N_{(3)}^{-\s} \prod_{j=4}^{2k+2} N_{(4)}^{2-\s} \prod_{j=1}^k \| f_j\|_{X_T}\| g_j\|_{\cj X_T}.
    \end{aligned}
    \end{equation}
    
\end{proposition}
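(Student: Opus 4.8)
The plan is to run the proof of the preceding proposition essentially verbatim, carrying the time-independent symbol $\Psi$ along as an inert factor bounded pointwise by $\|\Psi\|_{l^\infty}$. First I would set $F_j = e^{it\Delta} f_j$ and $G_j = e^{-it\Delta} g_j$, so that $\|F_j\|_{C^1([0,T],\FL^{\s,\infty})} \les \|f_j\|_{X_T}$ and $\|G_j\|_{C^1([0,T],\FL^{\s,\infty})} \les \|g_j\|_{\cj X_T}$, exactly as before. Passing to Fourier variables, $\Psi(P_{N_1}f_1,\dotsc,P_{M_{k+1}}g_{k+1})$ becomes a sum over $n_1 + \dotsb + n_{k+1} + m_1 + \dotsb + m_{k+1} = 0$ with $n_j \sim N_j$, $m_j \sim M_j$, weighted by $\Psi(n_1,m_1,\dotsc)$ and carrying the oscillatory factor $e^{i\tau\kappa}$, $\kappa := \sum_{j=1}^{k+1}(|n_j|^2 - |m_j|^2)$, $|\kappa| \les N_{(1)}^2$. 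I would then split this sum into the \emph{resonant} part $\{n_{(1)} \pm n_{(2)} = 0\}$, which by construction is precisely $\int \Psi[f_{(1)}(\tau),f_{(2)}(\tau)](\dotsc)\,d\tau$ in the notation \eqref{multiplierPaired} --- the quantity subtracted in \eqref{butcherDengPsi} and estimated in \eqref{butcherDengPsi2} --- and the \emph{non-resonant} part $\{n_{(1)} \pm n_{(2)} \neq 0\}$.

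On the non-resonant part I would integrate by parts in $\tau$, using $\partial_\tau\big(\tfrac{e^{i\tau\kappa}-1}{i\kappa}\big) = e^{i\tau\kappa}$. This produces a boundary term plus a term in which a single factor $\ft{F_{j_0}}(\tau,\cdot)\ft{G_{j_0}}(\tau,\cdot)$ has been differentiated in time --- both controlled by the $C^1$-in-time norms above --- while the oscillation has been converted into the gain $\jb{\kappa}^{-1}$. Because $\Psi$ carries no time dependence, it is untouched by this step and only contributes the overall constant $\|\Psi\|_{l^\infty}$; in particular it does not affect the frequency count. Summing over $|\kappa| \les N_{(1)}^2$ then costs an $N_{(1)}^{0+}$ from $\sum_\kappa \jb{\kappa}^{-1}$, and one is left with $\|\Psi\|_{l^\infty}\,N_{(1)}^{0+}\,\sup_{|\kappa|\les N_{(1)}^2}|E_\kappa|\,\prod_{j=1}^{2k+2}N_{(j)}^{-\s}\,\prod_{j}\|f_j\|_{X_T}\|g_j\|_{\cj X_T}$, exactly as in the proof of \eqref{butcherdeng}. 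Invoking the unchanged counting bound $|E_\kappa| \les N_{(1)}^{1+}N_{(3)}^{1+}\prod_{j\ge 4}N_{(j)}^2$ (which follows from \eqref{counting} after freezing $n_{(4)},\dotsc,n_{(2k+2)}$ and $\kappa$), together with the constraint $N_{(1)}\sim N_{(2)}$ forced by the zero-sum condition, yields \eqref{butcherDengPsi}. For \eqref{butcherDengPsi2} no integration by parts is needed: one estimates the resonant sum directly using the cruder count $|E_\kappa^p| \les N_{(1)}^2 \prod_{j\ge 4}N_{(j)}^2$ and $\prod_j N_{(j)}^{-\s}$, gaining the factor $\|\Psi\|_{l^\infty}$.

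The only thing to verify --- bookkeeping rather than a genuine obstacle --- is that the insertion of $\Psi$ is compatible with each step: it passes through the reduction to Fourier variables unchanged, it is transparent to the time integration by parts since it has no time dependence, and the resonant configurations it weights are, term by term, the operator $\Psi[f_{(1)},f_{(2)}]$ of \eqref{multiplierPaired}, so that the left-hand side of \eqref{butcherDengPsi} is indeed the non-resonant sum. Once this is checked, no new estimate is required --- the modulation analysis, the $C^1$-in-time control, and the counting lemma \eqref{counting} are reused verbatim --- and the only change to the conclusion of the preceding proposition is the harmless multiplicative factor $\|\Psi\|_{l^\infty}$. This is why, as stated, the proof is identical.
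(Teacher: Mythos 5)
Your proposal is exactly the argument the paper has in mind: the paper omits the proof of this proposition, stating that it is identical to that of the preceding one, and your write-up is precisely that proof rerun with the time-independent symbol $\Psi$ carried through as an inert factor bounded by $\|\Psi\|_{l^\infty}$, with the resonant set $\{n_{(1)}\pm n_{(2)}=0\}$ identified with the paired operator \eqref{multiplierPaired} and the counting bound \eqref{counting} reused verbatim. No gap; this matches the paper's (omitted) proof in approach and detail.
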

% \begin{remark}
%     There is a further complication here in the case $k >1$, which is a bit subtle. When $k =1$, the "pairing term" in \eqref{butcherdeng} that needs to be subtracted can either be estimated via the Butcher-Crispi argument, or it has imaginary part equal to $0$ (due to the fact that when $k=1$, if two frequencies are paired, then all of them are, and so the integral is real). However, in the term 
%     $$ 2k(k+1) 
% \Im (D^{s-2} \nabla \bar u \nabla u u^{k} \bar u^{k-1}, D^{s} u),$$
% if the frequencies of $D^s u$ and $\nabla u$ are paired, the Butcher-Crispi argument does not apply. However, in $t = 0$ this term is well defined for $s > 2$, see the section on Lilly's argument.
% \end{remark}
%\begin{remark}
%    Modulo the comment above, this Butcher-Deng argument should cover all the terms with the exception of the ones where $2s-1$ derivatives fall on the highest frequency, and the third highest frequency is much smaller than the others. 
%    However, we might have to rewrite the statement above to allow a Fourier multiplier (for the commutator term and the nonlinear terms).
%\end{remark}
\subsection{Large phase regime}
\begin{proposition}\label{prop:butchercrispi}
    Let $f_1,f_2,\dotsc,f_k \in \cj{X}_T$ and $g_1,g_2,\dotsc,g_k \in X_T$. Let $N_1,\dotsc,N_k$, $M_1,\dotsc,M_k$ be dyadic numbers with $N_1 \ge N_2\ge \dotsb $, and $M_1\ge M_2 \ge \dotsb$. Finally, suppose that 
    \begin{equation*}
        N_2^2 \gg k M_1^2.
    \end{equation*}
    Then for every $0\le t \le T$,
    $$ \Big|\int_0^t \prod_{j=1}^k P_{N_j}f_j(\tau,x) P_{M_j}g_j(\tau,x) d\tau dx\Big| \les_T \frac{1}{N_1^4}\prod_{j=1}^k (N_j M_j)^{2-\s} \|f_j\|_{ X_T} \|g_j\|_{\cj X_T}.  $$
    Moreover, if $\Psi: (\Z^2)^{2k} \to \R$ is a bounded multiplier, we have that for every $0\le t \le T$,
    \begin{equation} \label{butchercrispi}
    \begin{aligned}
        &\Big|\int_0^t \Psi\big( P_{N_1}f_1(\tau),P_{M_1}g_1(\tau),\dotsc,P_{N_k}f_k(\tau), P_{M_k}g_k(\tau) d\tau dx\big)\Big| \\
        &\les_T \frac{\|\Psi\|_{\ell^\infty}}{N_1^4}\prod_{j=1}^k (N_j M_j)^{2-\s} \|f_j\|_{ X_T} \|g_j\|_{\cj X_T},
    \end{aligned}
    \end{equation} 
    where $\Psi$ is defined as in \eqref{PsiDef}.
\end{proposition}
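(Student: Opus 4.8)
\emph{Proof proposal.} The plan is to pass to the Fourier side in space and time and to exploit the sign structure built into the hypothesis: all the heavy factors $f_1,\dots,f_k$ lie in $\cj X_T$ and all the $g_j$ lie in $X_T$, so the two largest Fourier modes contribute to the modulation function with the \emph{same} sign and there is no resonance. For $f_j\in\cj X_T$ write $\ft{f_j}(\tau,\ell)=e^{i\tau|\ell|^2}a_j(\tau,\ell)$, and for $g_j\in X_T$ write $\ft{g_j}(\tau,n)=e^{-i\tau|n|^2}b_j(\tau,n)$; by Definition~\ref{XTspace}, uniformly on $[0,T]$, one has $|a_j|+|\partial_\tau a_j|\lesssim\jb\ell^{-\sigma}\|f_j\|_{\cj X_T}$ and $|b_j|+|\partial_\tau b_j|\lesssim\jb n^{-\sigma}\|g_j\|_{X_T}$. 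Then, by \eqref{PsiDef} (the frequency support being finite because of the projectors), the left-hand side of \eqref{butchercrispi} equals
\[
  \sum_{\substack{\ell_1+\dots+\ell_k+n_1+\dots+n_k=0\\ \ell_j\sim N_j,\ n_j\sim M_j}}\Psi(\ell_1,\dots,n_k)\int_0^t e^{i\tau\kappa}A(\tau)\,d\tau,\qquad A(\tau):=\prod_{j=1}^k a_j(\tau,\ell_j)\,b_j(\tau,n_j),
\]
with $\kappa:=\sum_{j=1}^k|\ell_j|^2-\sum_{j=1}^k|n_j|^2$ independent of $\tau$ and of $\Psi$.

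First I would check that the modulation is large. On the summation set $|\ell_1|\sim N_1$ and $\sum_j|n_j|^2\le kM_1^2$, so the hypothesis $N_2^2\gg kM_1^2$ together with $N_1\ge N_2$ gives $\kappa\ge|\ell_1|^2-kM_1^2\gtrsim N_1^2$; in particular $\kappa\neq0$ there. This is the step where it is crucial that the two heaviest factors are \emph{both} conjugated, so that $|\ell_1|^2$ and $|\ell_2|^2$ enter $\kappa$ with the same sign (the structural point highlighted in the first Remark of the Introduction); if one of them were of $X_T$-type the modulation could collapse to size $O(M_1^2)$ and the argument would break down. Next I would integrate by parts once in $\tau$,
\[
  \int_0^t e^{i\tau\kappa}A(\tau)\,d\tau=\frac{1}{i\kappa}\big(e^{it\kappa}A(t)-A(0)\big)-\frac{1}{i\kappa}\int_0^t e^{i\tau\kappa}A'(\tau)\,d\tau,
\]
and estimate using the product rule (a $k$-dependent, hence bounded, number of terms) together with the $C^1$-in-time bounds above: $\sup_{[0,T]}(|A|+|A'|)\lesssim_T\prod_j\jb{\ell_j}^{-\sigma}\jb{n_j}^{-\sigma}\prod_j\|f_j\|_{\cj X_T}\|g_j\|_{X_T}$, whence, combining with the previous step, the oscillatory integral is $\lesssim_T N_1^{-2}\prod_j\jb{\ell_j}^{-\sigma}\jb{n_j}^{-\sigma}\prod_j\|f_j\|_{\cj X_T}\|g_j\|_{X_T}$.

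Finally I would sum over frequencies. Bounding $|\Psi|\le\|\Psi\|_{\ell^\infty}$ and using the zero-sum constraint to solve for the top frequency $\ell_1$, one sums freely over the remaining $2k-1$ modes with $\jb{\ell_1}^{-\sigma}\lesssim N_1^{-\sigma}$ on the support; since $\sum_{|\ell|\sim N}\jb\ell^{-\sigma}\lesssim N^{2-\sigma}$,
\[
  \sum_{\substack{\sum\ell_j+\sum n_j=0\\ \ell_j\sim N_j,\ n_j\sim M_j}}\prod_{j}\jb{\ell_j}^{-\sigma}\jb{n_j}^{-\sigma}\ \lesssim\ N_1^{-\sigma}\prod_{j\ge2}N_j^{2-\sigma}\prod_{j}M_j^{2-\sigma}\ =\ N_1^{-2}\prod_{j=1}^k(N_jM_j)^{2-\sigma}.
\]
Multiplying this $N_1^{-2}$ by the one gained from the integration by parts produces the $N_1^{-4}$ of \eqref{butchercrispi}; since $\Psi$ does not depend on $\tau$ it passes through the integration by parts untouched, and the first estimate of the Proposition is the case $\Psi\equiv1$ (up to a harmless constant from the normalisation of $\T^2$).

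\textbf{Main obstacle.} The delicate point is producing \emph{two} powers of $N_1^{-2}$ with only one integration by parts at hand. Because the profiles are merely $C^1$ in time (not $C^2$), one cannot iterate the oscillatory-integral gain $1/|\kappa|$; the second factor $N_1^{-2}$ must instead be squeezed out of the frequency counting, which forces the bookkeeping to ``spend'' the zero-sum relation specifically on the largest frequency rather than on a lower one. Everything also rests on the robust lower bound $\kappa\gtrsim N_1^2$, which is exactly what the assumption $f_j\in\cj X_T$ provides — without it cancellation between the two top modes would be possible and the scheme would collapse.
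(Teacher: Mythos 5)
Your argument is correct and is essentially the paper's own proof: you pass to Fourier profiles (the paper phrases this via $F_j=e^{it\Delta}f_j$, $G_j=e^{-it\Delta}g_j$ with $C^1([0,T],\mathcal{F}L^{\sigma,\infty})$ bounds), use the lower bound $|\kappa|\gtrsim N_1^2$ coming from the fact that all large frequencies enter the modulation with the same sign to integrate by parts once in time, and then obtain the second factor of $N_1^{-2}$ by spending the zero-sum constraint on the largest frequency in the counting, exactly as in the paper. No gaps; the bounded multiplier $\Psi$ is handled identically in both arguments.
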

\begin{proof}
    Since the proof for the general case is identical, we focus on the case $\Psi \equiv 1$. Define
    \begin{equation} \label{FjC1}
       F_j = e^{it\Delta} f_j, \quad G_j = e^{-it\Delta} g_j
    \end{equation}
    By definition, we have that 
      $$ \|F_j\|_{C^1([0,T], {\mathcal F}L^{\s,\infty})} \les \|f_j\|_{\cj X_T}, \quad \|G_j\|_{C^1([0,T], {\mathcal F}L^{\s,\infty})} \les \|g_j\|_{\cj X_T}.$$
    Therefore, writing the integral in Fourier variables and integrating by parts, calling 
    $$ \phi = \phi(n_1, \dotsc, n_k, m_1, \dotsc, m_k) = |n_1|^2+ \dotsb + |n_k|^2 - |m_1|^2- \dotsb - |m_k|^2, $$ 
    we get 
    \begin{align*}
        &\int_0^t \prod_{j=1}^k P_{N_j}f_j(\tau,x) P_{M_j}g_j(\tau,x) d\tau dx \\
        &= \int_0^t \sum_{\substack{n_1 +\dotsb + n_k \\
        + m_1 + \dotsb m_k =0,\\
        n_j \sim N_j, m_j \sim M_j}} 
        {e^{i\tau \phi}} \prod_{j=1}^k \ft{F_j}(\tau,n_j)\ft{G_j}(\tau,m_j)d\tau \\
        &= \left[\sum_{\substack{n_1 +\dotsb + n_k \\
        + m_1 + \dotsb m_k =0,\\
        n_j \sim N_j, m_j \sim M_j}}
        \frac{e^{i\tau\phi} -1}{i \phi} \prod_{j=1}^k \ft{F_j}(\tau,n_j)\ft{G_j}(\tau,m_j) \right]_0^t \\
        &\phantom{XX} - \int_0^t \sum_{\substack{n_1 +\dotsb + n_k \\
        + m_1 + \dotsb m_k =0,\\
        n_j \sim N_j, m_j \sim M_j}}
        \frac{e^{i\tau\phi} -1}{i \phi} \partial_\tau\left(\prod_{j=1}^k \ft{F_j}(\tau,n_j)\ft{G_j}(\tau,m_j)\right) d\tau.
    \end{align*}
    By the assumptions on $N_j,M_j$, we have that 
    $$ |\phi| \gtrsim N_1^2.$$ 
    Moreover, by Leibnitz and \eqref{FjC1}, we have that 
    $$\left|\partial_\tau\left(\prod_{j=1}^k \ft{F_j}(\tau,n_j)\ft{G_j}(\tau,m_j)\right)\right| \les \prod_{j=1}^{k}N_j^{-\s}M_j^{-\s} \|f_j\|_{X_T} \|g_j\|_{\cj X_T}. $$
    Therefore, by triangle inequality, we obtain that 
    \begin{align*}
         &\left|\int_0^t \prod_{j=1}^k P_{N_j}f_j(\tau,x) P_{M_j}g_j(\tau,x) d\tau dx\right|\\
         &\les_T \sum_{\substack{n_1 +\dotsb + n_k \\
        + m_1 + \dotsb m_k =0,\\
        n_j \sim N_j, m_j \sim M_j}} \frac{1}{N_1^2} \prod_{j=1}^{k}N_j^{-\s}M_j^{-\s} \|f_j\|_{ X_T} \|g_j\|_{\cj X_T} \\
        &\les \prod_{j=2}^{k} N_j^2 \times \prod_{j=1}^k M_j^2 \times \frac{1}{N_1^2} \times \prod_{j=1}^{k}N_j^{-\s}M_j^{-\s} \|f_j\|_{ X_T} \|g_j\|_{\cj X_T} \\
        &= \frac{1}{N_1^4}\prod_{j=1}^k (N_j M_j)^{2-\s} \|f_j\|_{ X_T} \|g_j\|_{\cj X_T}.
    \end{align*}
\end{proof}

\subsection{Probabilistic bounds}
As we will see in the following sections, the previous deterministic bounds can be used to estimate every term of the derivative of the modified 
introduced along section \ref{modifiedenergy}, with the exception of the following 
% \begin{equation}
% \begin{aligned}
%      \Big(\int P_{N_1} D^s \cj u \Big(D^s \Big(P_{N_2} u \Big|\prod_{j=3}^{k+2} P_{N_j} u\Big|^2\Big) -  D^s P_{N_2} u \Big|\prod_{j=3}^{k+2} P_{N_j} u\Big|^2\Big) dx
% \end{aligned}
% \end{equation}
% for $N_1 \sim N_2$, $N_3,\dotsc, N_{k+2} \ll N_1^{0+}$, and a pairing term, i.e. 
\begin{equation}\label{totime0}
    \Big(\int \partial_j u D^s \cj u\Big) \Big(\int D^{s-2} \partial_j \cj u u |u|^{2(k-1)}\Big). 
\end{equation}
For this term, we will need the following probabilistic estimate.  
\begin{lemma}[Pairing term I, random estimate]
    Let 
    $$ u(x) = \sum_{n \in \Z^2} \frac{g_n}{\jb{n}^s} e^{in\cdot x}.  $$
    We have that 
    \begin{align}\label{pairing1}
        \E\Big| \int P_{N}(\partial_j u) P_N(D^s \cj u)\Big|^2 \les N^{4-2s}.
    \end{align}
\end{lemma}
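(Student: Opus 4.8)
The plan is to expand the integral in Fourier series, observe that it collapses to a diagonal sum in the Gaussian coefficients, and then estimate the second moment directly. By Plancherel on $\T^2$, the Fourier coefficient of $P_N(\partial_j u)$ at a frequency $n$ is $i n_j \jb n^{-s} g_n \1_{|n|\sim N}$, while the Fourier coefficient of $P_N(D^s\cj u)$ at $-n$ is $|n|^s\jb n^{-s}\cj{g_n}\1_{|n|\sim N}$; pairing these two yields, up to a harmless multiplicative constant $c$ coming from the normalisation of the transform,
\[
 \int P_N(\partial_j u)\,P_N(D^s\cj u) = c\, i\sum_{|n|\sim N} n_j|n|^s\frac{|g_n|^2}{\jb n^{2s}} .
\]
In particular this quantity is $i$ times a \emph{real} random variable, so that, with $a_n := n_j|n|^s\jb n^{-2s}\1_{|n|\sim N}$, we have $\big|\int P_N(\partial_j u)\,P_N(D^s\cj u)\big|^2 = c^2\big(\sum_{n} a_n|g_n|^2\big)^2$.

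Next I would compute the expectation of this square. Since $\{g_n\}$ is i.i.d.\ with $\E|g_n|^2=1$, and the condition $\E g_n^2=0$ together with $\E|g_n|^2=1$ forces $\Re g_n$ and $\Im g_n$ to be independent centred real Gaussians of variance $1/2$ each, so that $\E|g_n|^4=2$, one gets $\E[|g_n|^2|g_m|^2]=1+\1_{n=m}$. Therefore
\[
 \E\Big(\sum_{n} a_n|g_n|^2\Big)^2 = \Big(\sum_{n} a_n\Big)^2 + \sum_{n} a_n^2 .
\]
The crucial point is that the first term vanishes: the annulus $\{|n|\sim N\}$ is invariant under $n\mapsto -n$, and $a_{-n}=-a_n$, whence $\sum_n a_n=0$. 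This parity cancellation is really the heart of the lemma and is essential — bounding $(\sum_n a_n)^2$ by the triangle inequality would only give $\sim N^{6-2s}$, which is far too large — while everything else is routine bookkeeping.

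Finally, for $|n|\sim N$ one estimates $|a_n| \les N\cdot N^s\cdot N^{-2s} = N^{1-s}$, and there are $\les N^2$ such lattice points, so
\[
 \sum_n a_n^2 \les N^2\cdot N^{2-2s} = N^{4-2s},
\]
which is exactly \eqref{pairing1}. The only step that requires any thought is the identification of the exact cancellation $a_{-n}=-a_n$; the remainder being a short second-moment computation, I do not foresee a genuine obstacle. (One could alternatively invoke the Wiener chaos bound \cite[Theorem~I.22]{Simon} in place of the explicit fourth-moment computation, but the direct route is cleaner here.)
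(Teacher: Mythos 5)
Your proof is correct and follows essentially the same route as the paper: both reduce the integral to the diagonal sum $\sum_{|n|\sim N} i\,n_j |n|^s\jb{n}^{-2s}|g_n|^2$ and exploit the same parity cancellation — the paper by replacing $|g_n|^2$ with $|g_n|^2-1$ (licit since $\sum_{|n|\sim N} n_j\jb{n}^{-s}=0$), you by noting that the mean term $(\sum_n a_n)^2$ vanishes because $a_{-n}=-a_n$ — after which the variance computation gives $N^{4-2s}$ in either formulation.
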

\begin{proof}
    We have that 
    $$ \int P_{N}(\partial_j u) P_N(D^s \cj u) = \sum_{n \sim N} i n_j \frac{|g_n|^2}{\jb{n}^s} = \sum_{n \sim N} i n_j \frac{|g_n|^2 -1}{\jb{n}^s}.$$
    Therefore, 
    \begin{align*}
        \E\Big| \int P_{N}(\partial_j u) P_N(D^s \cj u)\Big|^2 \les \sum_{n \sim N} \frac{1}{\jb{n}^{2s-2}} \les N^{4 -2s}.
    \end{align*}
\end{proof}
On top of the previous, we will also need the following bound.
\begin{lemma}[Pairing term II, deterministic estimate]
    Let $\max(s-1,2)< \s < s$. Then 
    \begin{equation} \label{paring2}
        \Big| \int D^{s-2} \partial_j \cj u u |u|^{2(k-1)} \Big| \les \|u\|_{{\mathcal F}L^{\s,\infty}}^{2k}. 
    \end{equation} 
\end{lemma}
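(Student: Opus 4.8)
The plan is to bound the integral by a Fourier–side duality pairing, loading all the derivatives onto the conjugated factor $D^{s-2}\partial_j\bar u$ and using the algebra structure of the Fourier--Lebesgue spaces on the remaining product. First I would rewrite $u|u|^{2(k-1)}=u^k\bar u^{k-1}$, so that the quantity to bound is $I:=\int (D^{s-2}\partial_j\bar u)\,u^k\bar u^{k-1}$. Using the elementary duality estimate $|\int fg| = \big|\sum_{n\in\Z^2}\hat f(n)\hat g(-n)\big|\le \|f\|_{{\mathcal F}L^{-\sigma',1}}\|g\|_{{\mathcal F}L^{\sigma',\infty}}$, valid for any exponent $\sigma'$, one gets
\[
|I|\le \big\|D^{s-2}\partial_j\bar u\big\|_{{\mathcal F}L^{-\sigma',1}}\ \big\|u^k\bar u^{k-1}\big\|_{{\mathcal F}L^{\sigma',\infty}} .
\]
The idea is to choose $\sigma'$ with $2<\sigma'\le\sigma$ and $\sigma+\sigma'>s+1$. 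Such a $\sigma'$ exists precisely because the hypothesis $\sigma>\max(s-1,2)$ gives both $\sigma>2$ and $\sigma>(s+1)/2$ (one checks $\max(s-1,2)\ge(s+1)/2$ for every $s>2$), so $\max\big(2,\,s+1-\sigma\big)<\sigma$.

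With this choice, the first factor is controlled directly from the definition of $\|\cdot\|_{{\mathcal F}L^{\sigma,\infty}}$: since $\big|\widehat{D^{s-2}\partial_j\bar u}(n)\big|=|n|^{s-2}|n_j|\,|\hat u(-n)|\les\jb n^{s-1-\sigma}\|u\|_{{\mathcal F}L^{\sigma,\infty}}$, we obtain
\[
\big\|D^{s-2}\partial_j\bar u\big\|_{{\mathcal F}L^{-\sigma',1}}\les \|u\|_{{\mathcal F}L^{\sigma,\infty}}\sum_{n\in\Z^2}\jb n^{\,s-1-\sigma-\sigma'}\les \|u\|_{{\mathcal F}L^{\sigma,\infty}},
\]
the lattice sum being finite since $\sigma+\sigma'>s+1$. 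For the second factor, since $\sigma'>2$ the embedding ${\mathcal F}L^{\sigma',\infty}\subset\mathcal A$ together with the algebra property \eqref{eq:algebra} shows that ${\mathcal F}L^{\sigma',\infty}$ is a Banach algebra; iterating this across the $2k-1$ factors of $u^k\bar u^{k-1}$ and using $\|\bar u\|_{{\mathcal F}L^{\sigma',\infty}}=\|u\|_{{\mathcal F}L^{\sigma',\infty}}\le\|u\|_{{\mathcal F}L^{\sigma,\infty}}$ (the last inequality because $\sigma'\le\sigma$) gives $\big\|u^k\bar u^{k-1}\big\|_{{\mathcal F}L^{\sigma',\infty}}\les_k\|u\|_{{\mathcal F}L^{\sigma,\infty}}^{2k-1}$. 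Multiplying the two estimates yields $|I|\les\|u\|_{{\mathcal F}L^{\sigma,\infty}}^{2k}$, which is the claim.

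The only genuinely delicate point — the ``hard part,'' such as it is — is the bookkeeping in the first step: one must be able to split the derivative loss $s-1$ coming from $D^{s-2}\partial_j$ between the two decay exponents $\sigma$ and $\sigma'$ while still keeping $\sigma'>2$, and this is exactly where the hypothesis $\sigma>\max(s-1,2)$ is used. Everything else is routine. (Alternatively, one can avoid invoking the algebra property and instead expand $I$ directly as the $2k$-fold lattice sum $\sum_{n_0+\dots+n_{2k-1}=0}\jb{n_0}^{\,s-1-\sigma}\prod_{i\ge1}\jb{n_i}^{-\sigma}$, and bound it by the standard observation that in a zero-sum family of frequencies the two largest are comparable: sum freely over all frequencies except the two largest — each such sum being finite because $\sigma>2$ — and then over the common dyadic scale $N$ of the two largest, which reduces convergence of the resulting series in $N$ once more to $\sigma>\max(s-1,2)$.)
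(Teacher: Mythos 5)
Your argument is correct, and it takes a genuinely different (though comparably elementary) route from the paper's. The paper first integrates by parts to move the single derivative $\partial_j$ off the high-order factor, then applies Cauchy--Schwarz to get the bound $\|u\|_{H^{s-2}}\|u\|_{H^1}\|u\|_{L^\infty}^{2k-2}$, and concludes via the embeddings ${\mathcal F}L^{\sigma,\infty}\subset H^{s-2}$, $H^1$, $L^\infty$; the first embedding is exactly where $\sigma>s-1$ enters, the others need $\sigma>2$. You instead stay entirely on the Fourier side: you keep the full weight $|n|^{s-2}|n_j|\lesssim\jb{n}^{s-1}$ on the linear factor, pair ${\mathcal F}L^{-\sigma',1}$ against ${\mathcal F}L^{\sigma',\infty}$, and control the nonlinear factor by the algebra property of ${\mathcal F}L^{\sigma',\infty}$ for $\sigma'>2$ (Proposition \ref{algebraprop} plus $X_{2+}\subset\mathcal A$). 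Your exponent bookkeeping is right: the lattice sum needs $\sigma+\sigma'>s+1$, and such a $\sigma'\in(\max(2,s+1-\sigma),\sigma]$ exists precisely because $\max(s-1,2)\ge(s+1)/2$ for $s>2$, so the hypothesis $\sigma>\max(s-1,2)$ is used in essentially the same place as the paper's embedding into $H^{s-2}$. Neither route is stronger than the other; the paper's is shorter because the integration by parts lets crude $L^2$/$L^\infty$ bounds suffice, while yours avoids integration by parts and makes the derivative-splitting explicit on the Fourier side.

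One small caveat, which does not affect the validity of your main proof: the parenthetical alternative is stated too loosely. If the frequency $n_0$ carrying the weight $\jb{n_0}^{\,s-1-\sigma}$ is \emph{not} among the two largest, the ``free'' sum over $n_0$ is not finite merely because $\sigma>2$ (it would require $\sigma>s+1$); one must keep the restriction $|n_0|\lesssim N$ to the dyadic scale of the two largest frequencies and then sum the resulting powers of $N$, which again closes under $\sigma>\max(s-1,2)$, but the case analysis is needed.
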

\begin{proof}
By integration by parts we get
\begin{equation*}\Big| \int D^{s-2} \partial_j \cj u u |u|^{2(k-1)} \Big|\leq \|u\|_{H^{s-2}}\|u\|_{H^1} \|u\|_{L^\infty}^{2k-2}\end{equation*}
and we conclude by the inclusions:
$${\mathcal F}L^{\s,\infty}\subset H^{s-2}, \quad {\mathcal F}L^{\s,\infty}\subset H^{1}, \quad {\mathcal F}L^{\s,\infty}\subset L^{\infty}.$$

   % By Sobolev embeddings, we have that 
    %\begin{align*}
     %  \| D^{s-2} \partial_j \cj u \|_{H^{\s-s}} \les \|u\|_{{\mathcal F}L^{\s,\infty}}.
    %\end{align*}
    %Moreover, since $FL^{\s,\infty}$ is an algebra, 
    %\begin{align*}
     %   \| u |u|^{k-1} \|_{H^{\s-1}} \les \| u |u|^{2(k-1)} \|_{{\mathcal F}L^{\s,\infty}} \les \|u\|_{{\mathcal F}L^{\s,\infty}}^{2k-1}.
    %\end{align*}
    %Therefore, as long as $\s - s + \s -1 > 0 \Leftrightarrow \s > \frac{s+1}{2},$
    %we have that 
     %   $$ \Big| \int D^{s-2} \partial_j \cj u u |u|^{2(k-1)} \Big| \les \|u\|_{{\mathcal F}L^{\s,\infty}}^{2k}. $$
\end{proof}

We are now ready to show the required exponential bound for \eqref{totime0}.
\begin{lemma}\label{proofQin0}
For every $p, R>0$ we have the bound
\begin{equation}
\int_{B_R} \exp\Big (p\big |\big (\int D^s \bar u \nabla u\big) \cdot \big (\int |u|^{2k-2}u^2 D^{s-2} \bar u \nabla \bar u\big )\big|\Big )d\mu(u)   <+\infty
\end{equation}
where
$$ B_R = \{ u \in \FL^{\s, \infty}: \|u\|_{\FL^{\s, \infty}} \le R\}. $$
\begin{proof}
We denote 
$$ \int P_{N}(\nabla u) P_N(D^s \cj u) = B_N(u).$$
Note that, for every $N_0$ dyadic, we have the  bound 
\begin{equation*}
    |B_{N_0}(u)| \les N_0^{s+1} \| u\|_{\FL^{\s, \infty}}^2.
\end{equation*}
Therefore, defining 
$$\QQ_{>{N_0}}(u) =\big (\int D^s \bar u \nabla u - \sum_{M \le N_0} B_M(u) \big) \cdot \big (\int |u|^{2k-2}u^2 D^{s-2} \bar u \nabla \bar u\big ), $$
in view of \eqref{paring2}, it is enough to show that there exists $N_0$ dyadic such that 
\begin{equation}
    \int_{B_R} \exp( p |\QQ_{>N_0}(u)|) d \mu(u) < \infty.
\end{equation}
By \eqref{pairing1}, together with standard Wiener Chaos estimates (see \cite[Theorem I.22]{Simon}), for every $p\ge 1$, we have that 
\begin{equation}
     \E\Big|\int D^s \bar u \nabla u - \sum_{M \le N_0} B_M(u) \Big|^p \le p^p C^p N_0^{p (2-s)},
\end{equation}
for some universal constant $C > 0$.
Therefore, from \eqref{paring2}, we have that
\begin{equation} \label{QQ>toprove}
   \E |\QQ_{>N_0}(u)|^p \1_{B_R}(u) \le (C')^p R^{2kp} \E\Big|\int D^s \bar u \nabla u - \sum_{M \le N_0} B_M(u) \Big|^p  \le p^p (CC')^p N_0^{p (2-s)}R^{2kp},
\end{equation}
where $C'$ is the implicit constant in \eqref{paring2}. Therefore, we have 
\begin{align*}
    &\int_{B_R} \exp( p |\QQ_{>N_0}(u)|) d \mu(u) \\
    &= \sum_{h=0}^\infty  \int_{B_R} \frac{p^h |\QQ_{>N_0}(u)|^h}{h!} d\mu(u) \\
    &\le \frac{p^h (CC')^h N_0^{h (2-s)}R^{2hp}}{h!},
\end{align*}
so by choosing $N_0 \gg 1$ such that 
$$ p (CC') N_0^{(2-s)}R^{2k} < e, $$
the sum above converges and we get \eqref{QQ>toprove}.
\end{proof}
\end{lemma}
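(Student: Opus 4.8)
\emph{Proof sketch.} The plan is to isolate the only genuinely probabilistic object, namely the factor $\int D^s\bar u\,\nabla u$, and to show that after removing finitely many low frequencies it becomes small in $L^p(\mu)$ with a gain $N_0^{2-s}$, which (since $s>2$) beats the fixed constants $p$ and $R$. First I would record that the second factor is controlled deterministically on $B_R$: by \eqref{paring2} one has $\big|\int |u|^{2k-2}u^2 D^{s-2}\bar u\,\nabla\bar u\big|\les \|u\|_{\FL^{\s,\infty}}^{2k}\le R^{2k}$ whenever $u\in B_R$. Next I would decompose the first factor dyadically, $\int D^s\bar u\,\nabla u=\sum_M B_M(u)$ with $B_M(u)=\int P_M(\nabla u)\,P_M(D^s\bar u)$ (off-diagonal terms vanish, since in each dyadic block $\widehat{D^s\bar u}$ and $\widehat{\nabla u}$ are supported on the same annulus). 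The low-frequency part satisfies the crude bound $\big|\sum_{M\le N_0}B_M(u)\big|\les N_0^{s+1}\|u\|_{\FL^{\s,\infty}}^2\le N_0^{s+1}R^2$ on $B_R$, so that this piece, multiplied by the bounded second factor, is bounded on $B_R$ and hence contributes only a finite multiplicative constant to the exponential integral. It therefore suffices to estimate $\int_{B_R}\exp\!\big(p|\QQ_{>N_0}(u)|\big)\,d\mu(u)$, where $\QQ_{>N_0}(u)=\big(\sum_{M>N_0}B_M(u)\big)\cdot\big(\int|u|^{2k-2}u^2 D^{s-2}\bar u\,\nabla\bar u\big)$.

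The core of the argument is a high-frequency moment bound on the tail $\sum_{M>N_0}B_M(u)$. Since the blocks $B_M(u)$ involve disjoint families of Gaussians and are mean zero, \eqref{pairing1} gives $\E\big|\sum_{M>N_0}B_M(u)\big|^2=\sum_{M>N_0}\E|B_M(u)|^2\les\sum_{M>N_0}M^{4-2s}\les N_0^{4-2s}$, using $s>2$. As $\sum_{M>N_0}B_M(u)$ lies in the second Wiener chaos, the hypercontractivity estimate of \cite[Theorem I.22]{Simon} upgrades this to $\big(\E|\sum_{M>N_0}B_M(u)|^p\big)^{1/p}\les p\,N_0^{2-s}$ for all $p\ge1$. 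Multiplying by the deterministic bound on the second factor yields $\E\big[|\QQ_{>N_0}(u)|^p\1_{B_R}(u)\big]\le p^p C^p N_0^{p(2-s)}R^{2kp}$ for a universal $C>0$.

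Finally I would expand the exponential in a power series, integrate term by term, and use $h^h\le e^h h!$ to obtain $\int_{B_R}\exp(p|\QQ_{>N_0}|)\,d\mu=\sum_{h\ge0}\frac{p^h}{h!}\E\big[|\QQ_{>N_0}|^h\1_{B_R}\big]\les\sum_{h\ge0}\big(CepN_0^{2-s}R^{2k}\big)^h$, which converges as soon as $N_0$ is chosen large enough that $CepN_0^{2-s}R^{2k}<1$; this is possible precisely because $2-s<0$ and $p,R$ are fixed. The main obstacle is exactly this point: the functional $\int D^s\bar u\,\nabla u$ cannot be bounded deterministically at the regularity of $\mu$ (where $u\in H^{s-1-}$), so one must extract the cancellation coming from the mean-zero diagonal structure $\sum_n n\langle n\rangle^{-s}(|g_n|^2-1)$ and convert it into the decay $N_0^{2-s}$; the rest is routine bookkeeping with Wiener-chaos moment bounds.
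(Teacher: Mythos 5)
Your proposal is correct and follows essentially the same route as the paper: the same dyadic decomposition $\int D^s\bar u\,\nabla u=\sum_M B_M(u)$ with the crude bound $N_0^{s+1}R^2$ on the low frequencies, the deterministic bound \eqref{paring2} for the factor $\int |u|^{2k-2}u^2D^{s-2}\bar u\,\nabla\bar u$ on $B_R$, the variance bound \eqref{pairing1} upgraded by second-chaos hypercontractivity to $\big(\E|\cdot|^p\big)^{1/p}\les p\,N_0^{2-s}$, and the power-series expansion of the exponential with $N_0$ chosen large depending on $p,R$. The only (cosmetic) difference is that you make explicit the block-orthogonality/mean-zero step and the $h^h\le e^h h!$ bookkeeping, which the paper leaves implicit.
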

\section{Energy bounds}
From \eqref{energyestimate}, we can decompose the derivative of the energy in the following way 
\begin{align}
    &\frac{d}{dt}\Big (\|D^s u\|_{L^2}^2 -
k \Re (D^{s-2} \bar u u^{k+1} \bar u ^{k-1}, D^s u)\Big) \notag \\
    &= (\mathcal C^s u, D^s u) \tag{I} \label{E:1}\\
    &\phantom{=\ }-2k (k-1)
\Im (D^{s-2} \bar u u^{k+1}  \Delta \bar u \bar u^{k-2}, D^s u)  \tag{II} \label{E:2} \\
&\phantom{=\ }- 2k(k+1) 
\Im (D^{s-2} \nabla \bar u \nabla u u^{k} \bar u^{k-1}, D^{s} u) \tag{III} \label{E:3}\\
&\phantom{=\ }-  2k(k-1) 
\Im (D^{s-2} \nabla \bar u u^{k+1} \nabla \bar u \bar u^{k-2}, D^{s} u) \tag{IV} \label{E:4}\\
&\phantom{=\ }- 2 k (k+1)(k-1)\Im (D^{s-2} \bar u \nabla  u\nabla \bar u  u^{k} \bar u^{k-2}, D^{s} u) \tag{V} \label{E:5}\\
&\phantom{=\ }\begin{rcases}
&\phantom{=\ }+2 \Im (\nabla C^{s-2}(\bar u) |u|^{2k-2}u^2, \nabla D^{s-2} u)\notag\\
&\phantom{=\ }+(2k+2)\Im (D^{s-2}\bar u \nabla (|u|^{2k})|u|^{2k-2}u^2, \nabla D^{s-2} u)
\notag\\&\phantom{=\ }+k
\Im (D^{s-2}u \nabla (|u|^{2k-2} \bar u^{2}) |u|^{2k-2}u^2, \nabla D^{s-2} u)
\notag\\&\phantom{=\ }+
\Im (D^{s-2}(\bar u|u|^{2k} ) \nabla (|u|^{2k-2}u^2), \nabla D^{s-2} u)
\notag\\&\phantom{=\ }-2\Im (D^{s-2} \bar u \nabla(|u|^{4k-2} u^2), \nabla D^{s-2}u)
\notag\\&\phantom{=\ } +\Im (D^{s-2}\bar u \nabla (|u|^{2k-2} u^2), \nabla D^{s-2} (u|u|^{2k}))\notag\\
&\phantom{=\ } +k\Im (\nabla D^{s-2}\bar u |u|^{2k-2} u^2, D^{s-2} \bar u \nabla(u^{k+1}\bar u^{k-1}))
\end{rcases}\tag{VI}\label{E:6} \\
&\phantom{=\ } + 2k \Im(\nabla D^{s-2}\cj u |u|^{4k-2} u^2, \nabla D^{s-2} u) \tag{VII}\label{E:7}.
\end{align}
We note that each term \eqref{E:1}--\eqref{E:7} can be written as 
\begin{equation} \label{Psieta}
    \Im\big(\Psi_{\eta}(u,\cj{u},\dotsc,u,\cj{u})\big) = \Im\left(\sum_{N_1,N_2,\dotsc,N_{k_\eta}} \Psi_{\eta}(P_{N_1}u,\cj{P_{N_2}u},\dotsc,P_{N_{k_\eta - 1}}u,\cj{P_{N_{k_\eta - 1}}u})\right)
\end{equation}
where 
$\eta \in \{ \rm{I, II, III, IV, V, VI, VII}\},$
and $k_\eta = 2k+2$ if $\eta \in \{ \rm{I, II, III, IV, V}\},$ and $k_\eta = 4k+2$ otherwise. Moreover, since we always have 
$$ |\Psi_\eta| \les N_{(1)}^{2s}, $$
by \eqref{butcherDengPsi} and \eqref{butcherDengPsi2}, we deduce that 
\begin{align}
    \left|\int_0^T \sum_{N_{(3)} \gtrsim N_{(1)}} \Psi_{\eta}(P_{N_1}u,\cj{P_{N_2}u},\dotsc,P_{N_{k_\eta - 1}}u,\cj{P_{N_{k_\eta}}u}) d\tau \right| & \les_T \|u\|_{X_T}^{k_\eta}\sum_{N_{(3)} \gtrsim N_{(1)}} N_{(1)}^{2s} N_{(1)}^{2-2\s+} N_{(3)}^{-\s} \notag \\
    &\les \|u\|_{X_T}^{k_\eta} \sum  N_{(1)}^{2-s+} \notag \\
    &\les  \|u\|_{X_T}^{k_\eta}, \label{Psieta1}
\end{align}
since $s > 2$. Therefore, now for each term we bound the RHS of \eqref{Psieta} restricted to the condition $N_{(3)} \ll N_{(1)}$.
\subsection{Term \texorpdfstring{\eqref{E:1}}{(I)}} In this case we have that 
\begin{equation} \label{psiI}
    \Psi_{\rm{I}}(n_1,\dotsc,n_{2k+2}) = |n_{2k+2}|^s\left(\Big|\sum_{j=1}^{2k+1} n_j\Big|^s - \sum_{j=1}^{2k+1} |n_j|^s\right).
\end{equation}
We observe right away that if $|n_{2k+2}| \les N_{(3)},$ then 
$$ |\Psi_{\rm{I}}| \les N_{(1)}^s N_{(3)}^s, $$
and so by \eqref{butcherdeng} and \eqref{butcherdengpairing}, we obtain that 
\begin{equation} \label{contodanonripetereI}
    \begin{aligned}
    &\sum_{N_{(3)}\ll N_{(1)}, N_{2k+2} \les N_{(3)}} |\Psi_{\rm{I}}(P_{N_1}u,\cj{P_{N_2}u},\dotsc,P_{N_{2k+1}}u,\cj{P_{N_{2k+2}}u})| \\&\les \|u\|_{X_T}^{2k+2}  \sum_{N_{(3)}\ll N_{(1)}} N_{(1)}^s N_{(3)}^s N_{(1)}^{2-2\s+} N_{(3)}^{-\s} \\
    & \les \|u\|_{X_T}^{2k+2} \sum_{N_{(3)}\ll N_{(1)}} N_{(1)}^{2-s+} N_{(3)}^{0+} \\
    &\les \|u\|_{X_T}^{2k+2}.
\end{aligned}
\end{equation}
Therefore, in the following, we work in the case $|n_{2k+2}| \sim N_{(1)}$.
We write 
\begin{align*}
    \Psi_{\rm{I}}(n_1,\dotsc,n_{2k+2}) &= |n_{2k+2}|^s\left(\Big|\sum_{j=1}^{2k+1} n_j\Big|^s - |n_{(1)}|^s\right) + \Psi_{\rm{I}}^2(n_1,\dotsc,n_{2k+2}),
\end{align*}
where $|n_{(1)}|\ge \dotsb \ge |n_{(2k+1)}|$, and $\{n_{(j)}\}_{1\le j\le 2k+1}$ is a rearrangement of $\{n_j\}_{1 \le j \le 2k+1}$.
We write
\begin{align*}
    |n_{2k+2}|^s\left(\Big|\sum_{j=1}^{2k+1} n_j\Big|^s - |n_{(1)}|^s\right) = s|n_{2k+2}|^s|n_{(1)}|^{s-2}\Big(n_{(1)} 
    \cdot \sum_{j \neq (1)} n_j\Big) + \Psi_{\rm{I}}^3(n_1,\dotsc,n_{2k+2}).
\end{align*}
\begin{lemma}
    Let $x_1,\dotsc, x_n \in \R^2$ be such that 
    $$ 1 = |x_1| \ge |x_2| \ge \dotsb. $$
    Then there exists a universal constant $C = C_n >0$ such that 
    \begin{equation} \label{realLemma}
        \left| \Big|\sum_{j=1}^n x_j \Big|^s - 1 - s x_1 \cdot \sum_{j=2}^n x_j \right| \le C |x_2|^2.
    \end{equation}
\end{lemma}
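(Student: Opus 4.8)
The plan is to obtain \eqref{realLemma} from a second‑order Taylor expansion of the radial function $w \mapsto |w|^s$ (which is $C^2$ on $\R^2$ since $s>2$) along the segment joining $x_1$ to $x_1 + y$, where we set $y := \sum_{j=2}^n x_j$. Note that $|y| \le (n-1)|x_2| \le (n-1)|x_1| = n-1$, so $|y|$ is comparable to $|x_2|$; the argument then splits into the regime where $|y|$ is small and the regime where it is not.

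First, in the regime $|y| \ge \tfrac12$, I would use only crude bounds: $|x_1+y|^s \le n^s$, the middle term is $1$, and $|s\,x_1\cdot y| \le s(n-1)$, so the left‑hand side of \eqref{realLemma} is bounded by a constant depending only on $n$ and the fixed exponent $s$. On the other hand, in this regime $|x_2|^2 \ge |y|^2/(n-1)^2 \ge \tfrac{1}{4(n-1)^2}$, so this constant is itself $\le C(n,s)\,|x_2|^2$. This disposes of the case where the linearization around $x_1$ is not in a useful range.

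Second, in the regime $|y| < \tfrac12$, I would set $g(t) = |x_1 + ty|^s$ for $t \in [0,1]$. Since $\tfrac12 \le |x_1 + ty| \le \tfrac32$ throughout, $g$ is $C^2$ on $[0,1]$, with $g(0) = |x_1|^s = 1$ and, using $|x_1| = 1$, $g'(0) = s\,|x_1|^{s-2}\, x_1 \cdot y = s\, x_1 \cdot y$. A direct computation gives
$$ g''(t) = s(s-2)\,|x_1+ty|^{s-4}\big((x_1+ty)\cdot y\big)^2 + s\,|x_1+ty|^{s-2}|y|^2, $$
so by Cauchy–Schwarz, the bounds $\tfrac12 \le |x_1+ty| \le \tfrac32$, and $s>2$, we get $|g''(t)| \le s(s-1)(3/2)^{s-2}\,|y|^2$ for all $t \in [0,1]$. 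Since the left‑hand side of \eqref{realLemma} is exactly $|g(1) - g(0) - g'(0)|$, Taylor's theorem with Lagrange remainder yields $|g(1)-g(0)-g'(0)| \le \tfrac12 \sup_{t\in[0,1]}|g''(t)| \le \tfrac12 s(s-1)(3/2)^{s-2}\,|y|^2 \le C(n,s)\,|x_2|^2$, using $|y| \le (n-1)|x_2|$. Taking $C_n$ to be the larger of the two constants produced above finishes the proof.

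There is no serious obstacle here; the only point to be careful about is that the first‑order Taylor term around $x_1$ is meaningful only when $|y|$ is small relative to $|x_1|=1$, which is why the "$|y|$ not small" case has to be handled separately — and that is harmless precisely because in that case $|x_2|^2$ is bounded below by an absolute constant, absorbing the otherwise $O(1)$ error.
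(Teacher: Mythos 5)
Your proof is correct and follows essentially the same route as the paper: a second-order Taylor expansion of $y \mapsto |x_1+y|^s$ about $y=0$, with the left-hand side of \eqref{realLemma} identified as the Taylor remainder at $y=\sum_{j=2}^n x_j$, whose length is controlled by $(n-1)|x_2|$. The only difference is that the paper dispenses with your case split by observing that, since $s>2$, the map $w \mapsto |w|^s$ is $C^2$ on all of $\R^2$, so the bound $|q(y)|\le C_n|y|^2$ for $q(y)=|x_1+y|^s-1-s\,x_1\cdot y$ holds directly on the whole ball $|y|\le n-1$; your split, which keeps the segment away from the origin and handles $|y|\gtrsim 1$ crudely using $|x_2|\gtrsim_n 1$, is harmless, a bit more explicit, and would even work for exponents $1<s\le 2$.
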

\begin{proof}
    Consider the function $q:\R^2 \to \R$ given by 
    $$q(y) = \big|x_1 + y\big|^s - 1 - s x_1\cdot y. $$
    Since $s > 2$, this function is $C^2$, and it is immediate to check that $q(0) = 0$, $\nabla q (0) = 0$. Therefore, by the mean value theorem,  for every $|y| \le n-1$, there exists a constant $C_n$ such that 
    $$|q(y)| \le C_n |y|^2. $$
    The lemma follows by picking $y = \sum_{j=2}^n x_j$.
\end{proof}
\begin{corollary}
    Let $n_1, \dotsc, n_{2k+1} \in \Z^2$, and let $n_{(1)}, \dotsc, n_{(2k+1)}$ be any rearrangement of the $n_j$ such that 
    $$|n_{(1)}| \ge \dotsc \ge |n_{(2k+1)}|. $$
    Then 
    \begin{equation}\label{commutatorDecomposition}
        \left|\Psi_{\rm I}^2 + \Psi_{\rm I}^3\right| = |n_{2k+2}|^s\left|\Big|\sum_{j=1}^{2k+1} n_j\Big|^s - \sum_{j=1}^{2k+1} |n_j|^s - s |n_{(1)}|^{s-2}n_{(1)}\cdot \sum_{j=2}^{2k+1} n_{(j)}\right| \les |n_{(1)}|^{2s-2} |n_{(2)}|^2. 
    \end{equation}
\end{corollary}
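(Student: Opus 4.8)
The plan is to read off \eqref{commutatorDecomposition} directly from the scalar Lemma \eqref{realLemma}. First, unwinding the definitions of $\Psi_{\rm I}^2$ and $\Psi_{\rm I}^3$ introduced above, one sees that
\[
\Psi_{\rm I}^2 + \Psi_{\rm I}^3 = |n_{2k+2}|^s\Big( \Big|\sum_{j=1}^{2k+1} n_j\Big|^s - \sum_{j=1}^{2k+1}|n_j|^s - s\,|n_{(1)}|^{s-2}\, n_{(1)}\cdot \sum_{j=2}^{2k+1} n_{(j)}\Big),
\]
so the displayed equality in \eqref{commutatorDecomposition} holds by definition and only the final inequality requires proof. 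If $n_{(1)} = 0$ then every $n_j$ vanishes and the estimate is trivial, so we may assume $n_{(1)} \neq 0$.

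Next I would apply Lemma \eqref{realLemma} with $n = 2k+1$ and $x_j := n_{(j)}/|n_{(1)}|$; by the ordering of the $n_{(j)}$ we have $1 = |x_1| \ge |x_2| \ge \dotsb$, so the lemma gives
\[
\Big| \Big|\sum_{j=1}^{2k+1} x_j\Big|^s - 1 - s\, x_1\cdot \sum_{j=2}^{2k+1} x_j\Big| \le C\,|x_2|^2 = C\,\frac{|n_{(2)}|^2}{|n_{(1)}|^2}.
\]
Multiplying through by $|n_{(1)}|^s$ and using $\sum_{j=1}^{2k+1} n_{(j)} = \sum_{j=1}^{2k+1} n_j$, this becomes
\[
\Big| \Big|\sum_{j=1}^{2k+1} n_j\Big|^s - |n_{(1)}|^s - s\,|n_{(1)}|^{s-2} n_{(1)}\cdot \sum_{j=2}^{2k+1} n_{(j)}\Big| \le C\,|n_{(1)}|^{s-2}|n_{(2)}|^2.
\]

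Finally, I would replace $|n_{(1)}|^s$ by $\sum_{j=1}^{2k+1}|n_j|^s$ at the cost of the error $\sum_{j=2}^{2k+1}|n_{(j)}|^s$; since $|n_{(j)}|\le|n_{(2)}|\le|n_{(1)}|$ for $j\ge 2$ and $s>2$, this error is at most $2k\,|n_{(2)}|^s \le 2k\,|n_{(1)}|^{s-2}|n_{(2)}|^2$. Adding this to the previous display via the triangle inequality bounds the quantity inside the large parentheses defining $\Psi_{\rm I}^2 + \Psi_{\rm I}^3$ by $(C+2k)\,|n_{(1)}|^{s-2}|n_{(2)}|^2$. Since $\Psi_{\rm I}$ is evaluated under the frequency constraint $n_1 + \dotsb + n_{2k+2} = 0$, we have $|n_{2k+2}| = \big|\sum_{j=1}^{2k+1} n_j\big| \le (2k+1)|n_{(1)}|$, so multiplying through by $|n_{2k+2}|^s \le (2k+1)^s|n_{(1)}|^s$ yields $|\Psi_{\rm I}^2+\Psi_{\rm I}^3| \les |n_{(1)}|^{2s-2}|n_{(2)}|^2$, which is \eqref{commutatorDecomposition}. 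I do not expect a genuine obstacle here: all the analytic content is already contained in Lemma \eqref{realLemma}, and the only things to keep track of are the harmless replacement of $|n_{(1)}|^s$ by $\sum_j|n_j|^s$ and the fact that the frequency carrying the extra factor $|n_{2k+2}|^s$ is, by the momentum constraint, at most a constant times the top frequency $|n_{(1)}|$.
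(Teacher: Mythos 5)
Your proposal is correct and follows essentially the same route as the paper: apply Lemma \eqref{realLemma} to $x_j = n_{(j)}/|n_{(1)}|$, absorb the replacement of $|n_{(1)}|^s$ by $\sum_j |n_j|^s$ via $\sum_{j\ge 2}|n_{(j)}|^s \les |n_{(2)}|^s \le |n_{(1)}|^{s-2}|n_{(2)}|^2$, and multiply through by $|n_{2k+2}|^s$. Your only additions are cosmetic but sound: the trivial case $n_{(1)}=0$, and the explicit justification $|n_{2k+2}| \le (2k+1)|n_{(1)}|$ from the momentum constraint, which the paper leaves implicit (there the corollary is invoked in the regime $|n_{2k+2}|\sim N_{(1)}$).
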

\begin{proof}
    Applying \eqref{realLemma} to 
    $$ x_j := \frac{n_{(j)}}{|n_{(1)}|}, $$
    we obtain that 
$$  \left|\Big|\sum_{j=1}^{2k+1} n_j\Big|^s - |n_{(1)}|^s - s |n_{(1)}|^{s-2}n_{(1)}\cdot \sum_{j=2}^{2k+1} n_{(j)}\right| \les |n_{(1)}|^{s-2} |n_{(2)}|^2.$$
Moroever, we clearly have that 
$$\sum_{j=2}^{2k+1} |n_{(j)}|^s \les |n_{(2)}|^s \les |n_{(1)}|^{s-2} |n_{(2)}|^2. $$
Therefore, \eqref{commutatorDecomposition} follows by summing these two inequalities. \end{proof}
In order to complete the bound, we need to distinguish two cases. \\
\textbf{Case 1:} $(1)$ is even. In this case, we use the bounds \eqref{commutatorDecomposition}, \eqref{butchercrispi} and we obtain that 
\begin{align*}
    &\left|\int_0^T \sum_{N_{(3)} \ll N_{(1)}} \Psi_{\rm I}(P_{N_1}u,\cj{P_{N_2}u},\dotsc,P_{N_{2k+1 }}u,\cj{P_{N_{2k+2}}u}) d\tau \right| \\
    &\les \|u\|_{X_T}^{2k+2} \sum_{N_{(3)} \ll N_{(1)}} N_{(1)}^{2s-1} N_{(3)}  \frac1{N_{(1)}^4}\prod_{j=1}^{2k+2} N_j^{2-\s} \\
    &\les \|u\|_{X_T}^{2k+2} \sum_{N_{(3)} \ll N_{(1)}} N_{(1)}^{-1+} N_{(3)}^{3-s+} \\
    &\les \|u\|_{X_T}^{2k+2},
\end{align*}
since $s > 2$.\\
\textbf{Case 2:} $(1)$ is odd. In this case, we need to exploit \eqref{butcherDengPsi}, so we need to understand the term in \eqref{multiplierPaired}. Recalling that we are working in the situation where $|n_{2k+2}| \sim N_{(1)}, $ we have that 
\begin{align*}
            & \Psi_{\rm I}[u_{(1)},\cj{u}_{2k+2}](n_j: j \neq (1), 2k+2) \\
            &= \sum_{n_{2k+2} + n_{(1)} = 0} \Psi_{I}(n_1,\dotsc,n_{2k+2}) \ft{P_{N_{(1)}}u_{(1)}}(n_{(1)}) \ft{P_{N_{2k+2}}\cj{u}_{2k+2}}(n_{2k+2}).
\end{align*}
Under the condition $\sum_{j=1}^{2k+2} n_j = 0$, since $n_{2k+2} + n_{(1)} = 0$, we also have that 
$\sum_{j=1, j \neq (1)}^{2k+2} n_j =0$. Therefore, by \eqref{psiI}, we have that 
\begin{align*}
    |\Psi_{\rm{I}}| \ind_{n_{2k+2} + n_{(1)} = 0} &= \left||n_{2k+2}|^s \Big(|n_{(1)}|^s - \sum_{j=1}^{2k+1} |n_j|^s\Big)\right| \ind_{n_{2k+2} + n_{(1)} = 0} \\
    &= \left||n_{2k+2}|^s \Big(\sum_{j=1, j\neq (1)}^{2k+1} |n_j|^s\Big)\right| \ind_{n_{2k+2} + n_{(1)} = 0} \\
    &\les N_{(1)}^s N_{(3)}^s.
\end{align*}
In particular, by \eqref{butcherDengPsi2}, proceeding as in \eqref{contodanonripetereI}, we obtain that
\begin{align*}
        &\Big|\int_0^T \sum_{N_{(3)} \ll N_{(1)}, (1) \text{ odd}}\Psi_{\rm I}[u_{(1)},\cj{u}_{2k+2}](P_{N_j} u, P_{N_k} \cj u )_{j,k \neq (1)}(\tau)) d \tau\Big| \\
        &\les_T \|u\|_{X_T}^{2k+2}.
\end{align*}
Therefore, in view of \eqref{commutatorDecomposition}, by \eqref{butcherDengPsi}, we have that 
\begin{align*}
    & \left|\int_0^T \sum_{N_{(3)} \ll N_{(1)}, N_{(1)} \sim N_{2k+2}, (1) \text{ odd} }(\Psi_{\rm I}^2 + \Psi_{\rm I}^3) (P_{N_1}u,\cj{P_{N_2}u},\dotsc,P_{N_{2k+1 }}u,\cj{P_{N_{2k+2}}u}) \right| \\
    &\les_T \|u\|_{X_T}^{2k+2}\Big(1 + \sum_{N_{(3)} \ll N_{(1)} } N_{(1)}^{2s-2} N_{(3)}^{2} N_{(1)}^{1 - 2\s} N_{(3)}^{-\s} \Big) \\
    &\les \|u\|_{X_T}^{2k+2}\Big(1 + \sum_{N_{(3)} \ll N_{(1)} } N_{(1)}^{-1+} N_{(3)}^{2-s+} \Big),
\end{align*}
since $s>2$. Therefore, we are only left with estimating the multiplier 
$$ s|n_{2k+2}|^s|n_{(1)}|^{s-2}\Big(n_{(1)} 
    \cdot \sum_{j \neq (1)} n_j\Big).$$
We note that we can write this term in physical space as 
$$ s \int D^s P_{N_{2k+2}} \cj{u} D^{s-2} \nabla P_{N_{(1)}}u \cdot \nabla \big(\prod_{h' \neq (1) \text{ odd}} P_{N_{h'}} u \prod_{h'' \neq (1) \text{ even}}P_{N_{h''}} \cj{u}\big).  $$
Recalling that we only need to bound the imaginary part of the expression above, we have that 
\begin{align*}
& \sum_{N_j: N_j \ll N_{(1)}, j \neq (1), 2k+2}\Im\left(s \int D^s P_{N_{2k+2}} \cj{u} D^{s-2} \nabla P_{N_{(1)}}u \cdot \nabla \big(\prod_{h' \neq (1) \text{ odd}} P_{N_{h'}} u \prod_{h'' \neq (1) \text{ even}}P_{N_{h''}} \cj{u}\big)\right) \\
& = \Im\left(s \int D^s P_{N_{2k+2}} \cj{u} D^{s-2} \nabla P_{N_{(1)}}u \cdot \nabla (|P_{\ll N_{(1)}} u|^{2k})\right) \\
& =\frac{s}{2i}\Big( \int \big(D^s P_{N_{2k+2}} \cj{u} D^{s-2} \nabla P_{N_{(1)}}u  
    -D^s P_{N_{2k+2}} {u} D^{s-2} \nabla P_{N_{(1)}}\cj u\big)  \cdot \nabla (|P_{\ll N_{(1)}} u|^{2k})\Big).
\end{align*}
Moreover, the multiplier associated to 
$$ \big(D^s P_{N_{2k+2}} \cj{u} D^{s-2} \nabla P_{N_{(1)}}u     -D^s P_{N_{2k+2}} {u} D^{s-2} \nabla P_{N_{(1)}}\cj u\big) $$
is 
$$ |n_{2k+2}|^s|n_{(1)}|^{s-2}n_{(1)} - |n_{(1)}|^s|n_{2k+2}|^{s-2}n_{2k+2}. $$
Since this is an homogeneous expression of degree $2s-1$, by the mean value theorem we have that 
$$ \big||n_{2k+2}|^s|n_{(1)}|^{s-2}n_{(1)} - |n_{(1)}|^s|n_{2k+2}|^{s-2}n_{2k+2}\big| \les N_{(1)}^{2s-2}N_{(3)}. $$ 
Therefore, by \eqref{butcherDengPsi}, we have that 
\begin{align*}
&\begin{multlined}
        \sum_{N_{(3)} \ll N_{(1)}}\bigg|\int_0^T \frac{s}{2i}\Big( \int \big(D^s P_{N_{2k+2}} \cj{u} D^{s-2} \nabla P_{N_{(1)}}u  
    -D^s P_{N_{2k+2}} {u} D^{s-2} \nabla P_{N_{(1)}}\cj u\big)  \cdot \nabla (|P_{\ll N_{(1)}} u|^{2k})\Big)\bigg|
\end{multlined} \\
&\les \|u\|_{X_T}^{2k+2} \sum_{N_{(3)} \ll N_{(1)}} N_{(1)}^{2s-2}N_{(3)}^2 N_{(1)}^{1 - 2\s} N_{(3)}^{-\s} \\
&\les \|u\|_{X_T}^{2k+2} \sum_{N_{(3)} \ll N_{(1)}} N_{(1)}^{-1+}N_{(3)}^{2-s+} \\
&\les \|u\|_{X_T}^{2k+2},
\end{align*}
which concludes the estimate for this term. In particular, we have that 
\begin{equation} \label{E:1bound}
    \left|\int_0^T \Psi_{\rm{I}}(u(\tau),\cj{u}(\tau),\dotsc,u(\tau),\cj{u}(\tau))\right|  \les \|u\|_{X_T}^{2k+2}. 
\end{equation}
\subsection{Term \texorpdfstring{\eqref{E:2}}{(II)}} 
In this case, we have that 
\begin{equation} \label{psiII}
    \Psi_{\rm{II}} \sim |n_2|^{s}|n_4|^2|n_6|^{s-2}.
\end{equation}
We further decompose the RHS of \eqref{Psieta} the condition $N_{(3)} \ll N_{(1)}$ in the following way
\begin{align*}
    %\Psi_{\rm{II}} &= 
    &\Big(
    %\sum_{N_{(3)} \gtrsim N_{(1)}} + 
    \sum_{\substack{N_{(3)} \ll N_{(1)}\\ N_2, \max(N_4,N_6) \sim N_{(1)}}} + \sum_{\substack{N_{(3)} \ll N_{(1)}\\ \min(N_2, \max(N_4,N_6)) \ll N_{(1)}}}\Big)  \Psi_{\rm{II}}(P_{N_1}u,\cj{P_{N_2}u},\dotsc,P_{N_{2k+1}}u,\cj{P_{N_{2k+2}}u}) \\
    &=: \big(
    %\Psi_{\rm{II}}^{1} + 
    \Psi_{\rm{II}}^{2} + \Psi_{\rm{II}}^{3}\big) (u,\cj{u},\dotsc,u,\cj{u}).
\end{align*}
% By \eqref{butcherdengpairing} and \eqref{psiII}, we have that 
% \begin{align*}
%     \left|\int_0^T \Psi_{\rm{II}}^{1}(u,\cj{u},\dotsc,u,\cj{u})\right| & \les \|u\|_{X_T}^{2k+2} \sum_{N_{(3)} \gtrsim N_{(1)}} N_{(1)}^{2s} \times N_{(1)}^{2-2\s+} N_{(3)}^{-\s} \prod_{j=4}^{2k+2} N_{(j)}^{2-\s} \\
%     &\les_T \|u\|_{X_T}^{2k+2} \sum_{N_{(3)} \gtrsim N_{(1)}} N_{(1)}^{2s} \times N_{(1)}^{2-2s+} N_{(3)}^{-s+} \prod_{j=4}^{2k+2} N_{(j)}^{2-s+} \\
%     &\les \|u\|_{X_T}^{2k+2} \sum_{N_j} N_{(1)}^{2 - s +} \prod_{j=4}^{2k+2} N_{(j)}^{2-s+} \\
%     &\les \|u\|_{X_T}^{2k+2} 
% \end{align*}
% since $s > 2$. 
For $\Psi_{\rm{II}}^2$, we rely on \eqref{butchercrispi}, and obtain 
\begin{align*}
    \left|\int_0^T \Psi_{\rm{II}}^{2}(u,\cj{u},\dotsc,u,\cj{u})\right| & \les \|u\|_{X_T}^{2k+2} \sum_{\substack{N_{(3)} \ll N_{(1)}\\ N_2, \max(N_4,N_6) \sim N_{(1)}}}
    N_{(1)}^{\max(s+2,2s-2)}N_{(3)}^{\min(s-2,2)} \frac{1}{N_{(1)}^4} \prod_{j=1}^{2k+2} N_{(j)}^{2-\s}\\
    &\les \|u\|_{X_T}^{2k+2} \sum N_{(1)}^{\max(2-s,-2)+} N_{(3)}^{\min(0,4-s)+} \prod_{j=4}^{2k+2} N_{(j)}^{2-\s} \\
    &\les \|u\|_{X_T}^{2k+2} \sum N_{(1)}^{2-s+} \\
    &\les \|u\|_{X_T}^{2k+2},
\end{align*}
since $s>2$. Finally, for $\Psi_{\rm{II}}^3$, we use once again \eqref{butcherdengpairing}, and obtain that 
\begin{align*}
     \left|\int_0^T \Psi_{\rm{II}}^{3}(u,\cj{u},\dotsc,u,\cj{u})\right| & \les \|u\|_{X_T}^{2k+2} 
     \sum_{\substack{N_{(3)} \ll N_{(1)}\\ \min(N_2, \max(N_4,N_6)) \ll N_{(1)}}} N_{(1)}^{s} N_{(3)}^s   
     N_{(1)}^{2-2\s+} N_{(3)}^{-\s} \prod_{j=4}^{2k+2} N_{(j)}^{2-\s}\\
     &\les \|u\|_{X_T}^{2k+2} \sum N_{(1)}^{2-s+} N_{(3)}^{0+} \prod_{j=4}^{2k+2} N_{(j)}^{2-s+} \\
     &\les \|u\|_{X_T}^{2k+2},
\end{align*}
since once again $s>2$. In particular, we deduce that 
\begin{equation} \label{E:2bound}
    \left|\int_0^T \Psi_{\rm{II}}(u(\tau),\cj{u}(\tau),\dotsc,u(\tau),\cj{u}(\tau))\right|  \les \|u\|_{X_T}^{2k+2}. 
\end{equation}
\subsection{Term \texorpdfstring{\eqref{E:3}}{(III)}}
In this case, we have that 
$$\Psi_{\rm{III}} \sim n_1\cdot n_2|n_2|^{s-2} |n_4|^{s}$$
We further decompose the RHS of \eqref{Psieta} the condition $N_{(3)} \ll N_{(1)}$ in the following way,
\begin{align*} 
    &\Big(
   \sum_{\substack{N_{(3)} \ll N_{(1)}\\ N_2,N_4 \sim N_{(1)}}} + 
    \sum_{\substack{N_{(3)} \ll N_{(1)}\\ N_4 \ll N_{(1)} \text{ or }\\
    N_4 \sim N_{(1)}, \max(N_1,N_2) \ll N_{(1)}}} +
    \sum_{\substack{N_{(3)} \ll N_{(1)}\\ N_1,N_4 \sim N_{(1)}}}
   \Big)
    \Psi_{\rm{III}}(P_{N_1}u,\cj{P_{N_2}u},\dotsc,P_{N_{2k+1}}u,\cj{P_{N_{2k+2}}u}) \\
    &=: \big(
    \Psi_{\rm{III}}^{2} + \Psi_{\rm{III}}^{3}
    + \Psi_{\rm{III}}^4\big) (u,\cj{u},\dotsc,u,\cj{u}).
\end{align*}
For the first term, we use \eqref{butchercrispi}, and deduce that 
\begin{align*}
    \left|\int_0^T \Psi_{\rm{III}}^{2}(u,\cj{u},\dotsc,u,\cj{u})\right| & \les \|u\|_{X_T}^{2k+2} \sum_{\substack{N_{(3)} \ll N_{(1)}\\ N_2,N_4 \sim N_{(1)}}}
    N_{(1)}^{2s-1}N_{(3)} \frac{1}{N_{(1)}^4} \prod_{j=1}^{2k+2} N_{(j)}^{2-\s}\\
    &\les \|u\|_{X_T}^{2k+2} \sum_{\substack{N_{(3)} \ll N_{(1)}\\ N_2,N_4 \sim N_{(1)}}}
    N_{(1)}^{-1+}N_{(3)}^{3-s+} \prod_{j=4}^{2k+2} N_{(j)}^{2-\s} \\
    &\les \|u\|_{X_T}^{2k+2} \sum N_{(1)}^{2-s+} \\
    &\les \|u\|_{X_T}^{2k+2},
\end{align*}
since $s>2$. For $\Psi_{\rm{III}}^3$, we use \eqref{butcherdeng} and \eqref{butcherdengpairing}, and obtain 
\begin{align*}
    \left|\int_0^T \Psi_{\rm{III}}^{3}(u,\cj{u},\dotsc,u,\cj{u})\right| & \les \|u\|_{X_T}^{2k+2} \sum_{\substack{N_{(3)} \ll N_{(1)}\\ N_4 \ll N_{(1)} \text{ or }\\
    N_4 \sim N_{(1)}, \max(N_1,N_2) \ll N_{(1)}}}
    N_{(1)}^{s}N_{(3)}^s N_{(1)}^{2-2\s +} N_{(3)}^{-\s} \prod_{j=4}^{2k+2} N_{(j)}^{2-\s}\\
    &\les \|u\|_{X_T}^{2k+2} \sum_{\substack{N_{(3)} \ll N_{(1)}}}
    N_{(1)}^{2-s+}N_{(3)}^{0+} \prod_{j=4}^{2k+2} N_{(j)}^{2-\s} \\
    &\les \|u\|_{X_T}^{2k+2} \sum N_{(1)}^{2-s+} \\
    &\les \|u\|_{X_T}^{2k+2},
\end{align*}
since $s>2$ once again. For $\Psi_{\rm{III}}^4$, we use \eqref{butcherdeng} and deduce that 
\begin{align*}
    &\left|\int_0^T \left(\Psi_{\rm{III}}^{4}(u,\cj{u},\dotsc,u,\cj{u}) + 2k(k+1) \Big(\int D^s \cj{u} \nabla u\Big)\cdot \int |u|^{2k-1} u D^{s-2}\nabla \cj{u}   \right) d \tau \right| \\
    & \les \|u\|_{X_T}^{2k+2}     \sum_{\substack{N_{(3)} \ll N_{(1)}\\ N_1,N_4 \sim N_{(1)}}}
    N_{(1)}^{s+1}N_{(3)}^{s-1} N_{(1)}^{1-2\s +} N_{(3)}^{1-\s} \prod_{j=4}^{2k+2} N_{(j)}^{2-\s}\\
    &\les \|u\|_{X_T}^{2k+2} \sum_{\substack{N_{(3)} \ll N_{(1)}\\ N_1,N_4 \sim N_{(1)}}}
    N_{(1)}^{2-s+}N_{(3)}^{0+} \prod_{j=4}^{2k+2} N_{(j)}^{2-\s} \\
    &\les \|u\|_{X_T}^{2k+2} \sum N_{(1)}^{2-s+} \\
    &\les \|u\|_{X_T}^{2k+2},
\end{align*}
Together with \eqref{Psieta1}, we obtain 
\begin{equation} \label{E:3bound}
\begin{aligned}
        &\left|\int_0^T \left(\Psi_{\rm{III}}(u,\cj{u},\dotsc,u,\cj{u})+ 2k(k+1)(k-1) \Big(\int D^s \cj{u} \nabla u\Big)\cdot \int |u|^{2k-2} u^2 D^{s-2} \cj{u} \nabla \cj{u}  \right) d \tau \right|  \\
        &\les \|u\|_{X_T}^{2k+2}.
\end{aligned}
\end{equation}
\subsection{Term \texorpdfstring{\eqref{E:4}}{(IV)}}
In this case, we have that 
\begin{equation} \label{psiIV}
    \Psi_{\rm{IV}} \sim n_2\cdot n_4|n_4|^{s-2} |n_6|^{s}.
\end{equation}
We further decompose the RHS of \eqref{Psieta} the condition $N_{(3)} \ll N_{(1)}$ in the following way,
\begin{align*} 
    &\Big(
   \sum_{\substack{N_{(3)} \ll N_{(1)}\\ \max(N_2,N_4),N_6 \sim N_{(1)}}} + \sum_{\substack{N_{(3)} \ll N_{(1)}\\ \min(\max(N_2,N_4),N_6) \ll N_{(1)}}}\Big)
    \Psi_{\rm{IV}}(P_{N_1}u,\cj{P_{N_2}u},\dotsc,P_{N_{2k+1}}u,\cj{P_{N_{2k+2}}u}) \\
    &=: \big(
    %\Psi_{\rm{II}}^{1} + 
    \Psi_{\rm{IV}}^{2} + \Psi_{\rm{IV}}^{3}\big) (u,\cj{u},\dotsc,u,\cj{u}).
\end{align*}
For the first term, we use \eqref{butchercrispi}, and deduce that 
\begin{align*}
    \left|\int_0^T \Psi_{\rm{IV}}^{2}(u,\cj{u},\dotsc,u,\cj{u})\right| & \les \|u\|_{X_T}^{2k+2} \sum_{\substack{N_{(3)} \ll N_{(1)}\\ \max(N_2,N_4),N_6 \sim N_{(1)}}}
    N_{(1)}^{2s-1}N_{(3)} \frac{1}{N_{(1)}^4} \prod_{j=1}^{2k+2} N_{(j)}^{2-\s}\\
    &\les \|u\|_{X_T}^{2k+2} \sum_{\substack{N_{(3)} \ll N_{(1)}\\ \max(N_2,N_4),N_6 \sim N_{(1)}}}
    N_{(1)}^{-1+}N_{(3)}^{3-s+} \prod_{j=4}^{2k+2} N_{(j)}^{2-\s} \\
    &\les \|u\|_{X_T}^{2k+2} \sum N_{(1)}^{2-s+} \\
    &\les \|u\|_{X_T}^{2k+2},
\end{align*}
since $s>2$. For $\Psi_{\rm{IV}}^3$, we use \eqref{butcherdeng} and \eqref{butcherdengpairing}, and obtain 
\begin{align*}
    \left|\int_0^T \Psi_{\rm{IV}}^{3}(u,\cj{u},\dotsc,u,\cj{u})\right| & \les \|u\|_{X_T}^{2k+2} \sum_{\substack{N_{(3)} \ll N_{(1)}\\ \min(\max(N_2,N_4),N_6) \ll N_{(1)}}}
    N_{(1)}^{s}N_{(3)}^s N_{(1)}^{2-2\s +} N_{(3)}^{-\s} \prod_{j=4}^{2k+2} N_{(j)}^{2-\s}\\
    &\les \|u\|_{X_T}^{2k+2} \sum_{\substack{N_{(3)} \ll N_{(1)}\\ \min(\max(N_2,N_4),N_6) \ll N_{(1)}}}
    N_{(1)}^{2-s+}N_{(3)}^{0+} \prod_{j=4}^{2k+2} N_{(j)}^{2-\s} \\
    &\les \|u\|_{X_T}^{2k+2} \sum N_{(1)}^{2-s+} \\
    &\les \|u\|_{X_T}^{2k+2},
\end{align*}
since $s>2$ once again. Together with \eqref{Psieta1}, we obtain 
\begin{equation} \label{E:4bound}
    \left|\int_0^T \Psi_{\rm{IV}}(u(\tau),\cj{u}(\tau),\dotsc,u(\tau),\cj{u}(\tau))\right|  \les \|u\|_{X_T}^{2k+2}. 
\end{equation}
\subsection{Term \texorpdfstring{\eqref{E:5}}{V}}
In this case, we have that 
\begin{equation}\label{psiV}
    \Psi_{\rm{V}} \sim n_1\cdot n_4 |n_2|^{s-2} |n_6|^{s}.
\end{equation}
We further decompose the RHS of \eqref{Psieta} the condition $N_{(3)} \ll N_{(1)}$ in the following way,
\begin{align*} 
    &\Big(
   \sum_{\substack{N_{(3)} \ll N_{(1)}\\ \max(N_2,N_4),N_6 \sim N_{(1)}}} + 
    \sum_{\substack{N_{(3)} \ll N_{(1)}\\ N_6 \ll N_{(1)} \text{ or }\\
    N_6 \sim N_{(1)}, \max(N_1,N_2,N_4) \ll N_{(1)}}} +
    \sum_{\substack{N_{(3)} \ll N_{(1)}\\ N_1,N_6 \sim N_{(1)}}}
   \Big)
    \Psi_{\rm{V}}(P_{N_1}u,\cj{P_{N_2}u},\dotsc,P_{N_{2k+1}}u,\cj{P_{N_{2k+2}}u}) \\
    &=: \big(
    \Psi_{\rm{V}}^{2} + \Psi_{\rm{V}}^{3}
    + \Psi_{\rm{V}}^4\big) (u,\cj{u},\dotsc,u,\cj{u}).
\end{align*}
For the first term, we use \eqref{butchercrispi}, and deduce that 
\begin{align*}
    \left|\int_0^T \Psi_{\rm{V}}^{2}(u,\cj{u},\dotsc,u,\cj{u})\right| & \les \|u\|_{X_T}^{2k+2} \sum_{N_{(3)} \ll N_{(1)}}
    N_{(1)}^{s + \max(s-2,1)}N_{(3)}^{1+\min(s-2,1)} \frac{1}{N_{(1)}^4} \prod_{j=1}^{2k+2} N_{(j)}^{2-\s}\\
    &\les \|u\|_{X_T}^{2k+2} \sum_{N_{(3)} \ll N_{(1)}}
    N_{(1)}^{\max(-2,1-s)+}N_{(3)}^{\min(1,3-s)+} \prod_{j=4}^{2k+2} N_{(j)}^{2-\s} \\
    &\les \|u\|_{X_T}^{2k+2} \sum N_{(1)}^{2-s+} \\
    &\les \|u\|_{X_T}^{2k+2},
\end{align*}
since $s>2$. For $\Psi_{\rm{V}}^3$, we use \eqref{butcherdeng} and \eqref{butcherdengpairing}, and obtain 
\begin{align*}
    \left|\int_0^T \Psi_{\rm{V}}^{3}(u,\cj{u},\dotsc,u,\cj{u})\right| & \les \|u\|_{X_T}^{2k+2} \sum_{\substack{N_{(3)} \ll N_{(1)}\\ N_6 \ll N_{(1)} \text{ or }\\
    N_6 \sim N_{(1)}, \max(N_1,N_2,N_4) \ll N_{(1)}}}
    N_{(1)}^{s}N_{(3)}^s N_{(1)}^{2-2\s +} N_{(3)}^{-\s} \prod_{j=4}^{2k+2} N_{(j)}^{2-\s}\\
    &\les \|u\|_{X_T}^{2k+2} \sum_{\substack{N_{(3)} \ll N_{(1)}\\ N_6 \ll N_{(1)} \text{ or }\\
    N_6 \sim N_{(1)}, \max(N_1,N_2,N_4) \ll N_{(1)}}} \prod_{j=4}^{2k+2} N_{(j)}^{2-\s} \\
    &\les \|u\|_{X_T}^{2k+2} \sum N_{(1)}^{2-s+} \\
    &\les \|u\|_{X_T}^{2k+2},
\end{align*}
since $s>2$ once again. For $\Psi_{\rm{V}}^3$, we use \eqref{butcherdeng} and deduce that 
\begin{align*}
    &\left|\int_0^T \left(\Psi_{\rm{V}}^{4}(u,\cj{u},\dotsc,u,\cj{u}) + 2k(k+1)(k-1) \Big(\int D^s \cj{u} \nabla u\Big)\cdot \int |u|^{2k-2} u^2 D^{s-2} \cj{u} \nabla \cj{u}  \right) d \tau \right| \\
    & \les \|u\|_{X_T}^{2k+2}     \sum_{\substack{N_{(3)} \ll N_{(1)}\\ N_1,N_6 \sim N_{(1)}}}
    N_{(1)}^{s+1}N_{(3)}^{s-1} N_{(1)}^{1-2\s +} N_{(3)}^{1-\s} \prod_{j=4}^{2k+2} N_{(j)}^{2-\s}\\
    &\les \|u\|_{X_T}^{2k+2} \sum_{\substack{N_{(3)} \ll N_{(1)}\\ N_6 \ll N_{(1)}}}
    N_{(1)}^{2-s+}N_{(3)}^{0+} \prod_{j=4}^{2k+2} N_{(j)}^{2-\s} \\
    &\les \|u\|_{X_T}^{2k+2} \sum N_{(1)}^{2-s+} \\
    &\les \|u\|_{X_T}^{2k+2},
\end{align*}
Together with \eqref{Psieta1}, we obtain 
\begin{equation} \label{E:5bound}
\begin{aligned}
        &\left|\int_0^T \left(\Psi_{\rm{V}}(u,\cj{u},\dotsc,u,\cj{u})+ 2k(k+1)(k-1) \Big(\int D^s \cj{u} \nabla u\Big)\cdot \int |u|^{2k-2} u^2 D^{s-2} \cj{u} \nabla \cj{u}  \right) d \tau \right|  \\
        &\les \|u\|_{X_T}^{2k+2}.
\end{aligned}
\end{equation}
\subsection{Term \texorpdfstring{\eqref{E:6}}{(VI)}}
In this case, we have that 
$$ |\Psi_{\rm{VI}}| \les N_{(1)}^{2s-3} N_{(3)}.$$
Therefore, by \eqref{butcherdeng} and \eqref{butcherdengpairing}, we obtain 
\begin{equation} \label{E:6bound}
\begin{aligned}
        &\left|\int_0^T \Psi_{\rm{VI}}(u,\cj{u},\dotsc,u,\cj{u}) d \tau \right| \\
        &\les \|u\|_{X_T}^{2k+2} \sum N_{(1)}^{2s-3} N_{(3)} N_{(1)}^{2-2\s} N_{(3)}^{-\s} \\
        &\les \|u\|_{X_T}^{2k+2} \sum N_{(1)}^{-1+} N_{(3)}^{1-s+} \\
        &\les \|u\|_{X_T}^{2k+2}.
\end{aligned}
\end{equation}
\subsection{Term \texorpdfstring{\eqref{E:7}}{(VII)}}
In this case, we have that 
\begin{equation} \label{psiVII}
    |\Psi_{\rm{VII}}| \sim |n_2|^{s-1} |n_4|^{s-1}. 
\end{equation}
We further decompose the RHS of \eqref{psiVII} under the condition $N_{(3)} \ll N_{(1)}$ in the following way,
\begin{align*} 
    &\Big(
   \sum_{\substack{N_{(3)} \ll N_{(1)}\\ \min(N_2,N_4)\ll N_{(1)}}} + \sum_{\substack{N_{(3)} \ll N_{(1)}\\ N_2, N_4 \sim N_{(1)}}}\Big)
    \Psi_{\rm{VII}}(P_{N_1}u,\cj{P_{N_2}u},\dotsc,P_{N_{2k+1}}u,\cj{P_{N_{2k+2}}u}) \\
    &=: \big(
    %\Psi_{\rm{II}}^{1} + 
    \Psi_{\rm{VII}}^{2} + \Psi_{\rm{VII}}^{3}\big) (u,\cj{u},\dotsc,u,\cj{u}).
\end{align*}
For the first term, we use \eqref{butcherdeng} and \eqref{butcherdengpairing}, and obtain that 
\begin{align*}
    &\left| \int_0^T \Psi_{\rm{VII}}^{2}(u,\cj{u},\dotsc,u,\cj{u}) d \tau \right| \\
    &\les \|u\|_{X_T}^{2k+2} \sum_{\substack{N_{(3)} \ll N_{(1)}\\ \min(N_2,N_4)\ll N_{(1)}}} N_{2}^{s-1}N_{4}^{s-1} N_{(1)}^{2-2\s+} N_{(3)}^{-\s} \\
    &\les \|u\|_{X_T}^{2k+2} \sum N_{(1)}^{s-1}N_{(3)}^{s-1} N_{(1)}^{2-2s+} N_{(3)}^{-s+} \\
    &\les \|u\|_{X_T}^{2k+2} \sum N_{(1)}^{1-s+}N_{(3)}^{-1+} \\
    &\les \|u\|_{X_T}^{2k+2},
\end{align*}
since $s > 2$. For the second term, we use \eqref{butchercrispi}, and deduce 
\begin{align*}
    &\left| \int_0^T \Psi_{\rm{VII}}^{3}(u,\cj{u},\dotsc,u,\cj{u}) d \tau \right| \\
    &\les \|u\|_{X_T}^{2k+2} \sum_{\substack{N_{(3)} \ll N_{(1)}\\ N_2,N_4\sim N_{(1)}}} N_{2}^{s-1}N_{4}^{s-1} \frac{1}{N_{(1)}^4}\prod_{j=1}^{4k+2} N_j^{2-\s} \\
    &\les \|u\|_{X_T}^{2k+2} \sum N_{(1)}^{2s-2} N_{(1)}^{-2\s}\\
    &\les \|u\|_{X_T}^{2k+2} \sum N_{(1)}^{-2+} \\
    &\les \|u\|_{X_T}^{2k+2},
\end{align*}
where we used once again that $s > 2$. Together with \eqref{Psieta1}, we obtain 
\begin{equation} \label{E:7bound}
\begin{aligned}
        \left|\int_0^T \Psi_{\rm{VII}}(u,\cj{u},\dotsc,u,\cj{u}) d \tau\right| 
        &\les_T \|u\|_{X_T}^{2k+2}.
\end{aligned}
\end{equation}
\section{Proof of the main theorem}
Let $S$ be as in \eqref{Sdef}, and let the Gaussian measure $\mu=\mu_s$ as in \eqref{defmus}. We first show that the measure 
\begin{equation} \label{rhodeffinal}
    \rho = \exp(-S(u)) d \mu
\end{equation}
is quasi-invariant. To this purpose, we aim to apply Proposition \ref{abstractquasiinvariance}. We then check all the assumptions 1-6. \\
\textbf{Assumption \ref{ass:1}:} We take $X = \FL^{\s,\infty}$ for $2< \s <s$ with $s-\s$ small enough. We have that $H = H^s$. The assumption then holds by definition of $\mu$ and $S$. \\
\textbf{Assumption \ref{ass:2}:} We pick 
$$ E_n = \{ f \in H^s: \ft f (m) = 0 \text{ for every } |m| > 2^n\}, $$
and so $P_n = P_{\le 2^n}$. We then define $\Phi_n$ to be the flow generated by the systems of ODEs on $E_n$ given by
\begin{equation} \label{Phindef}
    i \partial_t u + \Delta u = P_{n}(u |u|^{2k}). 
\end{equation}
The various properties of the flow follow from the well-posedness results in Section \ref{cauchyfl}, together with a standard approximation argument. \\
\textbf{Assumption \ref{ass3}} The fact that $\Phi_n$ preserves the Lebesgue measure of $E_n$ follows by Liouville's theorem applied to \eqref{Phindef}. We define 
\begin{gather}
\begin{multlined} \label{Q1def}
        \QQ_1^n(u)=-2k(k-1)(k+1)\big (\int D^s P_{\le N} \bar  u \nabla P_{\le N} u\big) \\
        \phantom{XXXXXXXXXXXXX}\times \big (\int |P_{\le N} u|^{2k-2}(P_{\le N} u)^2 D^{s-2} P_{\le N} \bar u \nabla P_{\le N} \bar u\big )\big|\Big ),
\end{multlined} \\ \label{Q2def}
    \QQ_2^n(u)=\frac 12 ({\rm I_n}+{\rm II_n}
+{\rm III}_n+{\rm IV}_n+{\rm V}_n+{\rm VI}_n+{\rm VII}_n)-\QQ_1^n(u),
\end{gather}
where
${\rm I_n}, {\rm II_n}
, {\rm III}_n, {\rm IV}_n, {\rm V}_n,{\rm VI}_n, {\rm VII}_n$
are respectively the expressions 
\eqref{E:1}, \eqref{E:2}, \eqref{E:3}, \eqref{E:4}, \eqref{E:5}, \eqref{E:6}, \eqref{E:7} computed at $P_{\le N} u$ (for $N = 2^n$).
Then \eqref{norm_derivative} follows from (the finite-dimensional analogue of) Proposition \ref{modifiedenergy}. \\
\textbf{Assumption \ref{ass_global}:} This is an immediate consequence of the global well-posedness result of Section \ref{cauchyfl}.\\
\textbf{Assumption \ref{ass5}:}
We have that \eqref{Rconvergence} is the content of Lemma \ref{lem:Sconvergence}, while \eqref{expSLploc} follows by \eqref{lem:Sexploc}.\\
\textbf{Assumption \ref{ass6}:}
We have that \eqref{Qin0} follows from \eqref{proofQin0} and Lemma \eqref{lem:Sexploc} together with H\"older's inequality. The bound
\eqref{spacetimeQ} follows from the fact that $\int_0^T \QQ_2^n \circ \Phi_t d t$ is bounded on every ball $B_R$, which in turn is a consequence of \eqref{E:1bound},
\eqref{E:2bound}, \eqref{E:3bound}, \eqref{E:4bound}, \eqref{E:5bound}, \eqref{E:6bound}, and \eqref{E:7bound}.
\begin{proof}[Proof of Theorem \ref{thm:main}]
    Since the measure $\rho$ in \eqref{rhodeffinal} is equivalent to $\mu_s$, by Proposition \ref{abstractquasiinvariance} we have that 
    $$ \Phi_{t\#}\mu_s \ll \Phi_{t\#}\rho_s \ll \rho_s \ll \mu_s.$$
    We now move to showing \eqref{mudensitybound}. Let 
    $$ g_t := \frac{d \Phi_{t\#} \rho}{d \rho}. $$
    For a functional $F$, we have that 
    \begin{align*}
        \int F \circ \Phi_t d \mu & = \int F \circ \Phi_t \exp(S) d \rho \\ 
        &= \int \big(F \exp(S\circ \Phi_{-t}\big) \circ \Phi_t d \rho \\
        &= \int \big(F \exp(S\circ \Phi_{-t}\big) g_t d\rho \\
        &= \int F \exp(S\circ \Phi_{-t} - S)g_t d\mu,
    \end{align*}
    from which we deduce that 
    \begin{equation}
        f_t = g_t \exp(S\circ \Phi_{-t}) \exp(- S).
    \end{equation}
    In particular, by H\"older, \eqref{localDensityBound}, and \eqref{Smomentbound}, we have that 
    \begin{align*}
        &\sup_{|t| \le T} \| f_t \1_{\|u\|_{X} \le R}\|_{L^p(\mu_s)}^p \\
        &= \sup_{|t| \le T} \big\|g_t \exp(S\circ \Phi_{-t}) \exp(- S) \1_{\|u\|_{X} \le R}\big\|_{L^p(\mu_s)}^p \\
        &= \sup_{|t| \le T} \int_{B_R} g_t^p  \exp(pS\circ \Phi_{-t}) \exp(- (p-1) S) d \rho \\
        &\le \sup_{|t| \le T} \Big(\int_{B_R} g_t^{3p-1} d\rho \int_{B_R} g_t \exp(3p S\circ \Phi_{-t}) d \rho \int_{B_R} \exp(-(3(p-1)) S) d \rho \Big)^\frac13 \\
        & \le \sup_{|t| \le T} \Big(\int_{B_R} g_t^{3p-1} d\rho \int_{B_{R'(T)}} \exp(3p S) d \rho \int_{B_R} \exp(-(3p-2) S) d \mu \Big)^\frac13 \\
        &\le \Big( \big(C_{6Tp}(R'(T))\big)^{\frac{18p^2}{(6p-1)(6p-2)}} \big(D_{2(3p-1)}(R'(T),T))^\frac{3p}{6p-1} \big(\int_{B_{R'(T)}} \exp(3p|S|) d \mu)^2 \Big)^\frac13 \\
        & < \infty.
    \end{align*}
\end{proof}

\end{document}